\def\spine{1.1in}
\def\figwidth{0.35\linewidth}
\def\uline#1{\underline{#1\mkern-6mu}\mkern6mu}
\def\cf{c_{{}_{\rm Fro}} }
\def\epsilon{\varepsilon}
\def\nn{\mathcal N}
\def\m{\mathcal M} 
\def\h{\mathcal H} 
\def\restr#1{\lower4pt\hbox{$\Big|$}\lower9pt\hbox{${\scriptstyle #1}$}\kern-1pt} 
\def\phi{\varphi}
\def\q{{q}}
\def\ge{\geqslant}
\def\geq{\geqslant}
\def\le{\leqslant}
\def\leq{\leqslant}
\def\U{{\mathbf e}}
\def\l{\textswab{L}}
\def\f{\mathcal{F}}
\def\e{\textswab{e}}
\def\weakto{\stackrel{*}{\scalebox{1.8}[1.0]{$ \rightharpoonup $}} } 
\DeclareMathOperator*{\glim}{ \Gamma-lim}
\DeclareMathOperator*{\dist}{dist}
\newcommand{\R}{\mathbb{R}}
\newcommand{\bs}[1]{\boldsymbol{#1}} 
\newcommand{\diam}[1]{\text{\rm diam}(#1)}
\newcommand{\diag}{\text{\rm diag\,}}
\newcommand{\omegaNseq}{\{ \omega_N \}_1^\infty}
\newcommand{\supp}{\text{\rm supp\,}}
\newcommand{\Hd}{\mathcal{H}_d}
\renewcommand{\c}{C_{s,d}}
\renewcommand{\d}{\,{d}}
\renewenvironment{proof}[1][\proofname]{\par
  \pushQED{\qed}%
  \normalfont \topsep6\p@\@plus6\p@\relax
  \trivlist
  \item[\hskip\labelsep
        \itshape
    \textbf{\textit{#1}}\@addpunct{.}]\ignorespaces
}{%
  \popQED\endtrivlist\@endpefalse
}
\providecommand{\proofname}{Proof}
\let\oldfootnote\footnote
\def\footnote{\@ifstar\footnote@star\footnote@nostar}
\def\footnote@star#1{{\let\thefootnote\relax\footnotetext{#1}}}
\def\footnote@nostar{\oldfootnote}
\newtheorem{theorem}{Theorem}
\newtheorem*{theorem*}{Theorem}
\newtheorem{lemma}[theorem]{Lemma}
\newtheorem{proposition}[theorem]{Proposition}
\newtheorem{corollary}[theorem]{Corollary}
\theoremstyle{definition} 
\newtheorem{definition}[theorem]{Definition} 
\newtheorem{remark}[theorem]{Remark}
\numberwithin{theorem}{section}
\numberwithin{equation}{section}
\newlist{gammalist}{enumerate}{1} 
\def\gammalisttag{$  ^{\Gamma} $}
\setlist[gammalist, 1]{label = \textbf{\arabic*\gammalisttag.}, ref = \textbf{\arabic*\gammalisttag}, start=1} 
\title{Asymptotics of  $k$-nearest neighbor Riesz energies}
\author{Douglas P. Hardin \and Edward B. Saff \and Oleksandr Vlasiuk}
\date{}
\begin{document}
\maketitle

\begin{center}
    {\it Dedicated to Ronald DeVore for his 80th birthday}
\end{center}

\begin{abstract}
  We obtain new asymptotic results about systems of $ N $ particles governed by Riesz interactions involving $ k $-nearest neighbors of each particle as $N\to\infty$. These results include a generalization to weighted Riesz potentials with external field. Such interactions offer an appealing alternative to other approaches for reducing the computational complexity of an $ N $-body interaction. We find the first-order term of the large $ N $ asymptotics and characterize the limiting distribution of the minimizers. We also obtain results about the $ \Gamma $-convergence of such interactions, and describe minimizers on the 1-dimensional flat torus in the absence of external field, for all $ N $.
\end{abstract}

\footnote*{{\it Date:}\ \today}
\footnote*{2000 {\it Mathematics Subject Classification.}\ {Primary, 31C20, 28A78; Secondary, 52A40}.}
\footnote*{{\it Key words and phrases.}\ {Riesz energy, $ k $ nearest neighbors,  equilibrium configurations, covering radius, separation distance, meshing algorithms}.}

\section{Introduction and main results}
\label{sec:intro_discrete}

Energy minimization methods for generating unstructured stencils on compact sets in $\R^p$ have been explored in, for example, \cite{borodachovDiscrete2019, hardinMinimal2005, hardinDiscretizing2004}. For a given $d$-dimensional compact set $A\subset \R^p$, these techniques utilize the Riesz kernel $\|x-y\|^{-s}$ with $s>0$  and minimize the following energy associated  with an $N$ point  configuration (tuple) $\omega_N=(x_1,x_2,\ldots, x_N)\in A^N$:
\begin{equation*}
E_s(\omega_N):= \sum_{i=1}^N\sum_{\substack{j=1\\ j\neq i}}^N\|x_j-x_i\|^{-s}.
\end{equation*}  
In the hypersingular case $s\ge d$, the poppy-seed bagel theorem  \cite[Thm 8.5.2]{borodachovDiscrete2019} asserts under mild conditions on $A$ that minimizing configurations $\omega_N^*$ for this energy converge in the weak-star sense to the uniform distribution with respect to $d$-dimensional Hausdorff measure.   More generally, by incorporating a multiplicative weight \cite{borodachovAsymptotics2008} or an external field \cite{hardinGenerating2017} in the above energy, one can generate configurations that converge to a prescribed density on $A$.

An obvious drawback to this method for discretizing manifolds is the  $O(N^2)$ computational  cost     for evaluating the energy or its gradient.    One  approach \cite{borodachovLow2014} to reducing this cost involves radial truncation. Instead, here we analyze truncation of $ E_s $ to a fixed number $k$ of nearest neighbors, as used heuristically in \cite{vlasiukFast2018}.   An advantage of this technique, in contrast to the radial truncation, is that memory and computational costs depend only on $k$ and $N$ (essentially $kN$)   and not on  $\omega_N$.
Furthermore, it leads to dimension-independent methods: optimization of an unweighted Riesz $ k $-energy $ E_s^k $, defined below, for a fixed $ s > 0 $ and $k \geq 1 $ yields a uniform distribution on the underlying set $ A $, irrespective of the Hausdorff dimension of $ A $.

In addition to grid generation,  repulsive interactions depending only on a certain number of nearest neighbors arise in many applications in physics, chemistry, and engineering \cite{isobeHardsphere2015, laiLattice1974, haoStability2013, percusOnedimensional1982}. Inspired by these examples, in the sequel we introduce the Riesz interaction with a fixed number $ k $ of nearest neighbors, and obtain the asymptotics of the minima of the energy $ E_s^k $, as well as the limiting distribution of asymptotic minimizers.

An outline of our manuscript is the following.
In subsections \ref{sec:preliminaries}--\ref{sec:not_conv} we formulate the main results and explain notational conventions assumed for the rest of the paper. Section~\ref{sec:num_experiments} gives some numerical illustrations of applying $ E_s^k $ to discretizing distributions, in particular on manifolds. In Section~\ref{sec:nearest_neighbor} we outline the proof strategy and discuss how choosing nearest neighbors in an interaction influences the geometry of its minimizers. The main proofs are contained in Section~\ref{sec:proofs}. Finally, Section~\ref{sec:connections} begins by investigating the special case of $ A = \mathbb T $, the 1-dimensional flat torus, and finds the minimizers of unweighted $ E_s^k $ on $ \mathbb T $; it also shows that the hypersingular full Riesz interaction $ E_s $ is in a sense limiting case of $ E_s^k $ when $ k\to \infty $. This allows to establish some new results for the hypersingular interaction, namely, the asymptotics of the combined functional, equipped with both weight and external field. The discussion concludes with the proof of $ \Gamma $-convergence of energies $ E_s^k $ on $ N $-point configurations for $ N\to \infty $.

As the authors recently became aware, in the special case of a Jordan measurable\footnote{ A set $A\subset \R^d$ is {\em Jordan measurable} if the   $d$-dimensional Lebesgue measure of the interior of   $A$ and the closure of $A$ are equal.}  set  $A \subset \mathbb R^d $ of positive $d$-dimensional Lebesgue measure,   weight $w\equiv 1$, and no external field, the asymptotic limit of minimizing short-range energies similar to those discussed in this paper was obtained by Fisher   \cite{Fis1964} and for the case of an external field by Garrod and Simmons \cite{GarrodRigorous1972} (see the recent review \cite{Lew2022} for more details).

\medskip
\noindent{\bf Acknowledgments.} The authors express their gratitude to the anonymous referees, whose comments and suggestions helped to improve this article. O.~V.~was supported by an AMS-Simons travel grant.

\subsection{Preliminaries}
\label{sec:preliminaries}
Throughout this paper,  $A$ shall denote a compact set in $\mathbb R^p$ with   $d$-dimensional Hausdorff measure ${\mathcal H}_d(A)<\infty$.  We refer to  an $N$-tuple $\omega_N=(x_1,x_2,\ldots, x_N)\in A^N$ as an {\em $N$-point configuration} (note that this allows repeated points of $A$) and define  the associated normalized counting measure (or empirical measure) 
\begin{equation}
    \label{eq:counting_measure}
\nu(\omega_N):=\frac{1}{N}\sum_{x\in\omega_N} \delta_x,
\end{equation}
with $\delta_x$ the unit mass Dirac measure at $x$.
Defined in this way, the space of $ N $-point configurations $ \omega_N $ inherits the topology from $ (\mathbb R^p)^N $, the latter induced by a fixed (not necessarily Euclidean) norm $\|\cdot\|$ on $\mathbb R^p$. We will occasionally need to apply set-theoretic operations to $ \omega_N $, such as removing or adding entries. For example, by an abuse of notation we write $ \omega_N\setminus \{ x_2 \} := (x_1, x_3, \ldots, x_N) \in A^{N-1} $. In the case of repeated entries in $ \omega_N $, only the first instance is removed. Similarly, $ x\in \omega_N $ means that the point $ x $ is one of the entries of tuple $ \omega_N $ and in this case we further define $ I(x;\omega_N) $ to be the index of the first occurrence of $ x $ as an entry of $ \omega_N $.

For $k,N\ge 1 $ and $\omega_N\in A^N$, let  $ \nn_k(x;\omega_N) $ stand  for the tuple consisting of the  $ k $ nearest neighbors of $x$ from $ \omega_N\setminus \{ x \} $ (with respect to $\|\cdot\|$) where, in the event of ties\label{pg:ties}, we select the points with the smaller indices in $\omega_N$.   When $N<k+1$, then we set  
$ \nn_k(x;\omega_N) =\omega_N\setminus \{ x \} $.
For instance, when $ \omega_4 = (a, a, a, b) $   we have $ \nn_3(a;\omega_4) = \nn_4(a;\omega_4) = ( a, a, b) $. 

For     an  external field  
$V:A\to \mathbb R$, a multiplicative weight  $w:A\times A\to [0,\infty]$, Riesz parameter $s> 0$,  and $k,N\ge 1$, we define the {\it $ k $-nearest neighbor Riesz $s$-energy ($ k $-energy for short)} of an $N$-point configuration  $\omega_N=(x_1,x_2,\ldots, x_N)\in  A^N$ as follows:
\begin{equation}\label{Edef}
    E^k_s(\omega_N; w, V) := 
    \sum_{x\in \omega_N} \sum_{y\in \nn_k(x;\omega_N)}w(x,y ) \|x-y \|^{-s} +  N^{s/d} \sum_{x\in\omega_N} V( x), \qquad k,N \geq 1,\ s > 0.
\end{equation} 
We use the convention that a sum over the empty set is zero; i.e., for $N=1$ we have $E^k_s((x_1); w, V)=V(x_1)$.  For brevity we also  write $ E^k_s(\omega_N; w)$ for $E^k_s(\omega_N; w, 0)$ so that
$$E^k_s(\omega_N; w, V)=E^k_s(\omega_N; w)+N^{s/d} \sum_{x\in\omega_N} V( x).$$

We define the optimal value of the above energy as
\begin{equation*}
    \mathcal{E}^k_s(A,N; w, V) := \inf_{ \omega_N\in A^N}  E^k_s(\omega_N; w, V). 
\end{equation*}
We say that a sequence $ \omegaNseq $  of $N$-point configurations in $A$ is {\em $(k,s,w,V)$-asymptotically optimal} if  
$$\lim_{N\to\infty}\frac{E^k_s(\omega_N; w, V)}{\mathcal{E}^k_s(A,N; w, V)}=1.$$

As in \cite{borodachovAsymptotics2008}, we  require that $w$ be a {\em CPD-weight}; that is,
 $w:A\times A\to [0,\infty]$  satisfies
\begin{enumerate}
    \item[(a)] $ w $ is positive and continuous at $ \h_d $-a.e.\ point of $ \diag(A) $ in the sense of limits taken on $ A\times A $;
    \item[(b)] there is a neighborhood $G \supset \diag(A)$ (relative to $A\times A$) such that $\inf_G w>0$;
    \item[(c)] $w$ is bounded on any closed subset $B\subset A\times A$ such that $B\cap \diag(A)=\emptyset$.
\end{enumerate}
Here CPD stands for {\it (almost) continuous and positive on the diagonal}. 
In fact, for our purposes a weaker version of (a) suffices, assuming (c) be strengthened to the boundedness of $ w $ on the entire $ A\times A $; it will be discussed in Section~\ref{sec:wt_fld}. We shall refer to a weight that satisfies only (b)-(c) as a {\it PD-weight}, for {\it positive on the diagonal}.

We shall refer to a weight $w$ as a {\em marginally radial weight on $A$} if it is of the form $w(x,y)=W(x,\|y-x\|)$ for some  $W:A\times [0,\diam A]\to [0,\infty]$ and $W(x,\cdot)\|\cdot\|^{-s} $ is nonincreasing on $[0,\diam A]$ for each $x\in A$. 
Note that if $w$ is a marginally radial  weight on $A$, then the energy $\mathcal{E}^k_s(A,N; w, V)$ is independent of  the chosen tie-breaking criterion, so this energy is well-defined also when $\omega_N$ is considered as a multiset.   In Theorem~\ref{thm:separation} establishing that a sequence of near energy minimizers has optimal order of separation, we find it convenient to assume that $w$ is a marginally radial weight, since then point energy (potential) is monotonically decreasing as a function of nearest neighbor distances.  

\subsection{Asymptotics of \texorpdfstring{$ k $}{k}-energies}
For the statement of our main results, we use the following definitions and notation:                                     
a set $ A \subset \mathbb R^{p} $ is called {\em $ d $-rectifiable} if for some compact $ A_0       \subset \mathbb R^d $ and a Lipschitz map $ f $ there holds $ A = f(A_0)  $ and $A$ is called {\it $(\mathcal H_d,d)$-rectifiable} if it is a   union of countably many $d$-rectifiable sets together with a set of $\mathcal H_d$-measure zero (see \cite{federerGeometric1996a}). The $ d $-dimensional Lebesgue measure on $\mathbb R^d$ is denoted by $ \mathcal L_d $ and
the $ d $-dimensional Hausdorff measure on $\R^p$ for $d\le p$ is denoted by $ \mathcal H_d $ and is normalized so   as to coincide with $ \mathcal L_d $ on isometric embeddings from $ \mathbb R^d $ to $  \mathbb R^{p} $. 
By $ \|\cdot \| $ we usually denote the Euclidean norm in $ \mathbb R^d $ and $ \mathbb R^p $, but the  arguments below apply to any fixed norms in these spaces. We recall that a sequence of measures $ \mu_n$, $ n\geq 1 $, supported on a compact set $A$ converges weak-star to a measure $ \mu $  on $A$ if
$\lim_{n\to\infty}\int f\, d\mu_n= \int f \, d\mu$ for all continuous $f:A\to \mathbb R$, in which case we write $ \mu_n \weakto \mu $, $ n\to \infty $.

\begin{theorem}
    \label{thm:k_asympt}
    Suppose $ A \subset \mathbb R^{p} $ is a compact $(\Hd,d)$-rectifiable set with $  \Hd(A) = \mathcal{M}_d(A) $,  $s>0$, and $k$ is a positive integer. 
    Then there is a   constant $\c^k$ such that $\c^k>0$ and for any lower semicontinuous external field $V$ and   CPD-weight $w$ the following limit holds:    
    \begin{equation}
        \label{eq:main_asymp}
        \begin{aligned}
            \lim_{N\to\infty}  \frac{ \mathcal{E}^k_s(A,N; w, V)}{N^{1+s/d}}  
            &= \c^k\int_A w(x,x) \rho( x)^{1+s/d}\d\Hd(x) + \int_A V( x) \rho( x) \d\Hd(x)\\
    \end{aligned}
    \end{equation}
    where
    \begin{equation}
        \label{eq:density}
        \rho( x) = \left(\frac{L_1-V( x)}{\c^k(1+s/d)w( x,  x)}\right)^{d/s}_+,\qquad (\cdot)_+:=\max\{0,\cdot\},
    \end{equation} 
    with the constant $ L_1 $  chosen so that $ \rho \d\Hd $ is a probability measure on $A$.
    
    Furthermore, if $ w(x,x) + V(x) $ is finite on a subset of $ A $ of positive $ \h_d $-measure and  $ \omegaNseq $ is a $(k,s,w,V)$-asymptotically optimal sequence of $N$-point configurations in $A$, then  the corresponding normalized counting measures 
    $\nu(\omega_N)$ converge weak-star to $ \rho \d\Hd $.
\end{theorem}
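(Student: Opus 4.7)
The plan is to first establish the constant $C_{s,d}^k$ by analyzing the unweighted field-free $k$-energy on the unit cube $[0,1]^d$, showing that
$$C_{s,d}^k := \lim_{N\to\infty} \frac{\mathcal E_s^k([0,1]^d, N; 1, 0)}{N^{1+s/d}}$$
exists and is strictly positive. Existence should follow from an almost-subadditivity argument: tiling $[0,M]^d$ by $M^d$ unit cubes and juxtaposing near-optimal configurations yields a near-optimal configuration on the larger cube, with cross-cube contributions controlled by the fact that the $k$ nearest neighbors of a typical interior point stay inside its home cube. This locality is the main advantage of $k$-nearest neighbor interactions over the full Riesz potential. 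Positivity follows from a volume-counting argument: among $N$ points in a bounded set, the $k$-th nearest neighbor distances are on average at most $O((k/N)^{1/d})$, so the total energy is at least of order $N^{1+s/d}$.

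Next I extend to general compact $(\mathcal H_d, d)$-rectifiable $A$ with $\mathcal H_d(A) = \mathcal M_d(A)$ in the unweighted, field-free setting. The standard patching technique applies: partition $A$ into small rectifiable pieces $A_i$ which, by rectifiability, admit bilipschitz parametrizations by subsets of $\mathbb R^d$ with bilipschitz distortion close to $1$. Allocating $N_i$ points to $A_i$ according to a target density $\rho$ and invoking the unit-cube asymptotics after rescaling gives
$$\limsup_{N\to\infty} \frac{\mathcal E_s^k(A, N; 1, 0)}{N^{1+s/d}} \leq C_{s,d}^k \int_A \rho(x)^{1+s/d}\,d\mathcal H_d(x),$$
which by Jensen's inequality is minimized under $\int \rho\,d\mathcal H_d = 1$ by the constant density $\rho \equiv 1/\mathcal H_d(A)$. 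A matching lower bound follows from a Vitali-type covering combined with the positivity of $C_{s,d}^k$ applied to each piece.

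For a CPD-weight $w$ and lower semicontinuous external field $V$, I would approximate $w(x,x)$ and $V$ by piecewise-constant functions (from above for the upper bound, from below for the lower bound) on a fine rectifiable partition $\{A_i\}$ with values $w_i, V_i$. On each piece $A_i$, a configuration of $N_i$ points contributes, to leading order,
$$w_i\,C_{s,d}^k\,\frac{N_i^{1+s/d}}{\mathcal H_d(A_i)^{s/d}} + N^{s/d}\,V_i\, N_i.$$
Writing $N_i = N t_i$ with $\sum t_i = 1$ and dividing by $N^{1+s/d}$, the problem reduces to the finite-dimensional minimization
$$\min_{t_i \geq 0,\ \sum_i t_i = 1}\ \sum_i \left( w_i\,C_{s,d}^k\,\frac{t_i^{1+s/d}}{\mathcal H_d(A_i)^{s/d}} + V_i\, t_i \right).$$
Lagrange multipliers yield $C_{s,d}^k(1+s/d)\,w_i\,(t_i/\mathcal H_d(A_i))^{s/d} + V_i = L_1$ on the active set and $t_i = 0$ otherwise, so that $t_i/\mathcal H_d(A_i)$ matches formula \eqref{eq:density} with $L_1$ determined by the unit-mass constraint; refining the partition recovers the integral representation \eqref{eq:main_asymp}.

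Finally, the weak-star convergence of $\nu(\omega_N)$ for any $(k,s,w,V)$-asymptotically optimal sequence follows by a standard compactness-and-uniqueness argument: any weak-star limit $\mu$ of a subsequence must, by lower semicontinuity of the discretized functionals under refinement, attain the minimum value of the limiting functional, which is strictly convex in $\rho$ and hence has the unique minimizer $\rho\,d\mathcal H_d$. The main obstacles I anticipate are, first, controlling boundary and cross-patch contributions carefully enough that the patched configurations remain near-optimal (so that the $k$-nearest-neighbor graph is not corrupted at the patch interfaces), and, second, handling the lower semicontinuous $V$ together with the positive-part truncation in \eqref{eq:density} — in particular, establishing that the forbidden region $\{V \geq L_1\}$ carries asymptotically zero mass for optimal configurations, which requires a separate localization estimate using the finiteness assumption on $w(x,x) + V(x)$.
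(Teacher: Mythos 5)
Your outline matches the paper on the unit-cube asymptotics (tiling with a shrink factor so the $k$-nearest-neighbor graph stays inside tiles, plus a volume/Jensen lower bound for positivity), and your Lagrange-multiplier computation is exactly the balance condition the paper derives. But there is a genuine gap at the heart of the lower bounds. Monotonicity runs the wrong way for what you want: the union $\tilde A$ of bi-Lipschitz pieces is a \emph{subset} of $A$, so $\mathcal E_s^k(A,N)\le \mathcal E_s^k(\tilde A,N)$, and ``a Vitali-type covering combined with the positivity of $C_{s,d}^k$ applied to each piece'' does not bound $\mathcal E_s^k(A,N)$ from below, because a near-optimal configuration on $A$ is under no obligation to live on the pieces: points falling in the exceptional set $A\setminus\tilde A$ (small $\mathcal H_d$-measure, but a priori spread out) could carry a positive fraction of the mass at negligible energy cost. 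The paper closes exactly this hole with a stability lemma (Lemma 4.5): near-minimizers are retracted from $A$ onto a large-measure compact subset $D$ with controlled energy distortion, and that retraction requires (i) the optimal-order separation of near-minimizers (Theorem 1.2, proved via Frostman's lemma and an exchange argument on the adjacency graph whose in-degree is bounded by Lemma 3.1), and (ii) the hypothesis $\mathcal H_d(A)=\mathcal M_d(A)$ to control, by a volume count in $\mathbb R^p$, how many points can sit far from $D$. Your sketch never uses the Minkowski-content assumption, which is the telltale sign that this step is missing; the same issue infects your weighted lower bound, since the per-piece lower estimates presuppose the liminf asymptotics on arbitrary compact pieces of a rectifiable set.

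The second gap is the weak-star convergence. ``Lower semicontinuity of the discretized functionals under refinement'' is not an argument: what is needed is a genuine $\Gamma$-liminf inequality along \emph{arbitrary} sequences of configurations whose counting measures converge to a cluster point $\mu$, including the statement that the liminf is $+\infty$ when $\mu$ is not absolutely continuous with respect to $\mathcal H_d$ (the paper's Proposition 4.9, which again rests on separation-type volume estimates). In the paper this localization is carried out with the edge-energy measures $\U(\omega_N;\cdot)$, a surgery lemma for asymptotically optimal sequences (Lemma 4.10), and the Lebesgue density and differentiation theorems, from which the a.e.\ pairwise balance condition — your Lagrange condition — is extracted for every cluster point; uniqueness then follows as you say from strict convexity plus normalization. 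So your strategy can be made to work in outline, but the two pillars it leans on (the lower/stability bound on general $(\mathcal H_d,d)$-rectifiable sets with $\mathcal H_d=\mathcal M_d$, and the liminf inequality plus absolute continuity for cluster points of optimal configurations) are precisely the nontrivial technical content, and neither is supplied by the steps you describe.
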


 If $d=p$, note that Theorem~\ref{thm:k_asympt} holds for any compact set $A\subset \mathbb R^p$.  We remark that in the special case $A = \q_d$,  the unit cube in $\mathbb R^d$, $V\equiv0$, and $w\equiv1$, Theorem 1.1 gives
 \begin{equation}\label{CsdkDef0}
 C_{s,d}^k=\lim_{N\to\infty}  \frac{ \mathcal{E}^k_s(\q_d,N; 1, 0)}{N^{1+s/d}}.  
 \end{equation}

\begin{corollary}
 Suppose $ A \subset \mathbb R^{p} $ is a compact $ d $-rectifiable set,  $s>0$, and $k$ is a positive integer.  Let  $\rho:A\to[0,\infty)$ be upper semi-continuous and such that $ \rho \d\Hd $ is a probability measure on $A$.  If $L_1\in \mathbb R$,  $w$ is a CPD-weight on $A\times A$  and $V$ is a lower semi-continuous  external field on $A$ such that 
 \begin{equation*}
 \begin{split}
\frac{L_1- V(x)}{w(x,x)}&=\c^k(1+s/d)\rho(x)^{s/d}, \qquad \text{for $\rho(x)>0$,}\\
 V(x)&\ge L_1, \qquad \text{for $\rho(x)=0$,}
 \end{split}
 \end{equation*}
 then $\nu(\omega_N)\weakto \rho \d\Hd$ for any  $(k,s,w,V)$-asymptotically optimal sequence $ \omegaNseq $. 
 
 In particular, if $V\equiv0$ and 
 \begin{equation*}
w(x,y):= (\rho(x)+  \|x-y\|)^{-s/d} ,
\end{equation*}
then $\nu(\omega_N)\weakto \rho \d\Hd$ for any  $(k,s,w,0)$-asymptotically optimal sequence $ \omegaNseq.$
\end{corollary}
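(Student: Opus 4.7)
The plan is to deduce the Corollary directly from Theorem~\ref{thm:k_asympt} by recognizing that the given density $\rho$ is exactly the density \eqref{eq:density} associated to the data $(w,V,L_1)$. On the set $\{\rho > 0\}$, rearranging the first displayed identity yields
\[
\rho(x) = \left(\frac{L_1-V(x)}{\c^k(1+s/d)\,w(x,x)}\right)^{d/s}_+,
\]
which is \eqref{eq:density}; on the set $\{\rho = 0\}$, the second hypothesis $V(x)\ge L_1$ forces the argument of the positive part in \eqref{eq:density} to be nonpositive, so that side vanishes as well.

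Since $\rho\,\d\Hd$ is by hypothesis a probability measure, the given $L_1$ is a valid normalizing constant for \eqref{eq:density}. It is moreover the unique such constant, as the mass of the density \eqref{eq:density} is nondecreasing in $L_1$ and strictly increasing once it is positive, but only existence matters here. Invoking Theorem~\ref{thm:k_asympt} then yields $\nu(\omega_N)\weakto \rho\,\d\Hd$ along any $(k,s,w,V)$-asymptotically optimal sequence.

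For the \emph{in particular} claim, I would set $L_1 := \c^k(1+s/d)$. With $V\equiv 0$ and $w(x,y)=(\rho(x)+\|x-y\|)^{-s/d}$, one has $w(x,x)=\rho(x)^{-s/d}$, which reduces the first hypothesis to the tautology $L_1\rho(x)^{s/d} = \c^k(1+s/d)\rho(x)^{s/d}$; the statement implicitly requires $\rho>0$ on $A$, since otherwise the second hypothesis would become $0\ge L_1 > 0$. The remaining task is to verify that this explicit $w$ is a CPD-weight: conditions (b) and (c) are immediate from the pointwise bound $0<w(x,y)\le \|x-y\|^{-s/d}$ on subsets of $A\times A$ bounded away from the diagonal, combined with the positivity of $w$ near it (using that $\rho$ is bounded above by upper semi-continuity on the compact set $A$).

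The only genuinely nontrivial point is condition~(a) of the CPD definition—continuity and positivity of $w$ at $\Hd$-a.e.\ point of the diagonal—which, in light of the explicit form of $w$, reduces to the continuity of $\rho$ at $\Hd$-a.e.\ point. This is a mild additional regularity property that upper semi-continuity alone does not guarantee in full generality, but which holds in the settings of interest (and can alternatively be handled by the weaker version of~(a) alluded to in the discussion of CPD-weights in Section~\ref{sec:preliminaries}). I expect this CPD verification to be the only subtle step; the rest is bookkeeping around the normalization of $\rho$.
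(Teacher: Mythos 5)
Your treatment of the main statement is correct and is exactly the intended argument (the paper supplies no separate proof): the two displayed hypotheses say precisely that $\rho$ coincides with the density \eqref{eq:density} for the given $L_1$, your monotonicity observation shows the normalizing constant (hence the density) is unique, and Theorem~\ref{thm:k_asympt} then gives the weak-star convergence. Two small bookkeeping points you skip: to apply the theorem one needs $\mathcal H_d(A)=\mathcal M_d(A)$, which for a compact $d$-rectifiable set is Federer's theorem; and the finiteness of $w(x,x)+V(x)$ on a set of positive $\mathcal H_d$-measure, required for the weak-star clause, follows from the first displayed identity on $\{\rho>0\}$ (there $w(x,x)<\infty$ is forced, since $L_1-V(x)$ is finite).

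The \emph{in particular} part is where your argument has a genuine gap. You cannot dismiss CPD condition (a) with ``holds in the settings of interest'': an upper semicontinuous density can be discontinuous on a set of positive $\mathcal H_d$-measure (e.g.\ $\rho$ proportional to $1+\mathbbm{1}_K$ with $K$ a fat Cantor set), so for $w(x,y)=(\rho(x)+\|x-y\|)^{-s/d}$ condition (a) may genuinely fail, and the route you mention only in passing---the weakened condition (a$''$) of Section~\ref{sec:wt_fld}---is the one that must actually be verified. That verification is where the hypotheses earn their keep: $w$ is marginally radial with $W(x,t)=(\rho(x)+t)^{-s/d}$ and $W(x,t)\,t^{-s}$ nonincreasing; $w$ is lower semicontinuous because $\rho$ is u.s.c.; (b)--(c) hold since $\bigl(\max_A\rho+\diam{A}\bigr)^{-s/d}\le w(x,y)\le \|x-y\|^{-s/d}$ off the diagonal; and the required bound $w(y,z)\le w(x,x)+\epsilon$ on large closed subsets $A_{x,r}\subset A\cap B(x,r)$ follows, for $\mathcal H_d$-a.e.\ $x$, not from continuity of $\rho$ but from $w(y,z)\le \rho(y)^{-s/d}$ together with the differentiation theorem for $\mathcal H_d$ on $A$ (a.e.\ point is a point of approximate continuity of $\rho$) and inner regularity to pass to closed sets. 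Separately, your claim that the statement ``implicitly requires $\rho>0$ on $A$'' is not needed and not what the corollary asserts: at points with $\rho(x)=0$ one has $w(x,x)=+\infty$, and \eqref{eq:density} then returns $\rho(x)=0$ automatically, so Theorem~\ref{thm:k_asympt} (with the (a$''$) form of the weight hypothesis, which tolerates $w=+\infty$ on the diagonal) applies directly; what breaks down when $\rho$ vanishes somewhere is only the literal second displayed hypothesis of the corollary's first part, not the conclusion.
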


 When  $0<s<d$, it is known (e.g., see \cite{borodachovDiscrete2019}) that  minimizing configurations for non-truncated ($k=N$) Riesz $s$-energy with $w\equiv 1$ and $V\equiv 0$ on a $d$-dimensional set $A$  converge weak-star to the $s$-equilibrium measure on $A$ which is, in general,  not uniform.  
A rather surprising consequence of the above theorem is that the limiting distribution is always uniform (with respect to $\Hd$) for $k$-truncated Riesz $s$-energy, even for $k=1$; i.e. using only   the  one nearest neighbor interaction! 
This provides a basis for applying gradient descent to nearest neighbor truncation of the Riesz energy as a means to obtain a prescribed distribution, a strategy previously used as a heuristic \cite{vlasiukFast2018}. 

Let the quantity
\begin{equation}
    \label{eq:separation}
    \Delta(\omega_N) := \min_{1\leq i < j \leq N} \|x_i - x_j\|
\end{equation}
denote the minimal distance between entries of the configuration $ \omega_N \in  A^N $. We refer to $ \Delta(\omega_N) $ as the {\it separation} of $ \omega_N $.

The following theorem will be necessary to compare the asymptotics of $ k $-energies to those of the full hypersingular Riesz energies; it shows that for $ k $-nearest neighbor interaction with $ k\geq 1 $, near-minimizers are spread over the set $  A $ with the best possible order of separation. In its statement, we say that $ w $ is {\it bounded on $ D $, a subset of $ A $} (as opposed to being bounded on a subset of $ A\times A $), if the values of $ w(z,x) $ and $ w(x,z) $ are bounded uniformly over $ z $ from $ D $ and $ x $  from $ A $:
\begin{equation}
    \label{eq:bounded_w}
    M_w := \sup \{ w(x,z)  : (x,z) \in (A\times D)\cup (D\times A) \} < \infty.
\end{equation}
\begin{theorem}
    \label{thm:separation}
    Suppose $ s > 0 $, $ A \subset  \mathbb R^p $ is compact, $ \mathcal H_d(A) > 0 $, $ w(x,y):A\times A \to [0,\infty) $ is a marginally radial PD-weight, and $ V $ a lower semicontinuous external field, both bounded on some $ D\subset A $, $ \h_d(D) > 0 $. 
    If $ \omegaNseq $ is a sequence such that for some \(R\geq0\),
    \[
        E_s^k(\omega_N; w, V) \leq \mathcal E_s^k(A, N; w, V) + RN^{s/d}, \qquad N\geq 1,
    \]
    then this sequence has the optimal order separation:
    \[
        \Delta(\omega_N) \geq C N^{-1/d}, \qquad N\geq 1,
    \]
    with $ C = C(s,k,p,d,w,V,A,R) $. In addition, in the case $ d=p $, the constant $ C $ can be made independent of the set $ A $.
\end{theorem}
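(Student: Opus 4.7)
The plan is a single swap argument: if $\omega_N$ is near-optimal but its minimum separation $r:=\Delta(\omega_N)$ is unusually small, produce a competitor configuration $\omega_N'$ whose energy is strictly lower than $\mathcal E_s^k(A,N;w,V)$, contradicting the hypothesis. Let $x_i,x_j\in\omega_N$ be a closest pair, $\|x_i-x_j\|=r$, and let $x_*\in D$ be a point satisfying $\|x_*-z\|\ge c_0 N^{-1/d}$ for every $z\in\omega_N$. Such $x_*$ exists by Frostman's lemma applied to $D$: since $\h_d(D)>0$, there is a probability measure $\mu$ supported in $D$ with $\mu(B(y,t))\le C_D t^d$ for all $y\in\R^p$, $t>0$, and for $c_0=(2C_D)^{-1/d}$ we have $\mu\bigl(\bigcup_{z\in\omega_N}B(z,c_0N^{-1/d})\bigr)\le \tfrac12$, leaving $D$ with positive $\mu$-mass outside this union. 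Set $\omega_N':=(\omega_N\setminus\{x_i\})\cup\{x_*\}\in A^N$, whose energy is at least $\mathcal E_s^k(A,N;w,V)$; combined with $E_s^k(\omega_N;w,V)\le \mathcal E_s^k(A,N;w,V)+RN^{s/d}$ from the hypothesis, it suffices to prove the lower bound
\begin{equation*}
E_s^k(\omega_N;w,V)-E_s^k(\omega_N';w,V)\ge c_w r^{-s}-C_1 N^{s/d},
\end{equation*}
which then yields $r\ge (c_w/(R+C_1))^{1/s}N^{-1/d}$.

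The key step is to write $E_s^k(\omega;w)=\sum_{z\in\omega}U_k(z;\omega)$ with point-energy $U_k(z;\omega):=\sum_{y\in\nn_k(z;\omega)}w(z,y)\|z-y\|^{-s}$, and to prove the monotonicity
\begin{equation*}
U_k(z;\omega_N')\le U_k(z;\omega_N),\qquad z\in\omega_N\setminus\{x_i\}.
\end{equation*}
I would split the swap into two steps: (a) remove $x_i$, and (b) insert $x_*$. Under (a) the set $\nn_k(z;\cdot)$ is either unchanged or loses $x_i$ and gains a strictly farther replacement; under (b) it is either unchanged or loses its farthest member in favor of the closer point $x_*$. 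In either case, the marginally radial hypothesis that $W(z,\cdot)\|\cdot\|^{-s}$ is nonincreasing guarantees that the swap of summands only decreases $U_k(z;\cdot)$. Consequently $E_s^k(\omega_N;w)-E_s^k(\omega_N';w)\ge U_k(x_i;\omega_N)-U_k(x_*;\omega_N')$, and adding the external-field change gives the displayed inequality once I bound: (i) $U_k(x_i;\omega_N)\ge w(x_i,x_j)r^{-s}\ge c_w r^{-s}$ using the PD-weight property $\inf_G w=:c_w>0$ on a diagonal neighborhood $G$, valid once $r$ is small enough that $(x_i,x_j)\in G$; (ii) $U_k(x_*;\omega_N')\le k M_w c_0^{-s}N^{s/d}$ from \eqref{eq:bounded_w} and the separation of $x_*$; and (iii) $V(x_*)-V(x_i)\le M_V-\inf_A V$, where $\inf_A V$ is finite by lower semicontinuity of $V$ on compact $A$ and $V(x_*)\le M_V$ since $V$ is bounded on $D$. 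Collecting these yields $C_1=kM_wc_0^{-s}+M_V-\inf_A V$.

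This argument closes the proof for all $N$ large enough that $(x_i,x_j)\in G$ and $N\ge k+2$; for the finitely many smaller $N$, the bound $\Delta(\omega_N)\ge CN^{-1/d}$ is secured by shrinking $C$, since any near-minimizer must have strictly positive separation (else the energy is $+\infty$), and the infimum of $\Delta(\omega_N)N^{1/d}$ over a finite range of $N$ is positive. In the case $d=p$, normalized Lebesgue measure on $D$ replaces the Frostman measure and $c_0$ depends only on $d$ and $\mathcal L_d(D)$; after further localizing $x_*$ inside a fixed-size cube intersecting $D$, the constants $c_0$ and $C_1$ involve only $s,k,d,w,V,R$ and not $A$. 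The principal difficulty is the two-step monotonicity of $U_k$: because the $k$-nearest-neighbor interaction is both asymmetric in $(x,y)$ and non-local — a single entry of $\omega_N$ can participate in the neighborhoods of many others — one must verify that each swap of summands goes in the correct direction, and this is exactly where the marginally radial property (as opposed to just the PD-weight condition) is essential.
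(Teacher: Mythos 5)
Your overall strategy — swap one point of the closest pair for a Frostman-separated point $x_*\in D$ and compare energies — is the same as the paper's, and your treatment of the removal step, of the terms $U_k(x_i;\omega_N)$ and $U_k(x_*;\omega_N')$, of the external field, and of the small-$N$/large-separation cases is essentially sound. The genuine gap is in step (b) of your ``key step'': the claimed monotonicity $U_k(z;\omega_N')\le U_k(z;\omega_N)$ for $z\in\omega_N\setminus\{x_i\}$ is false, and the marginally radial hypothesis in fact forces the opposite direction there. If inserting $x_*$ displaces the current $k$-th (farthest) neighbor $y_k$ of some $z$, the summand $w(z,y_k)\|z-y_k\|^{-s}$ is replaced by $w(z,x_*)\|z-x_*\|^{-s}$ with $\|z-x_*\|\le\|z-y_k\|$; since $W(z,\cdot)\|\cdot\|^{-s}$ is nonincreasing, the new term is at least as large as the old one, so insertion can only \emph{increase} the point energy of $z$. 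Consequently your central inequality $E_s^k(\omega_N;w)-E_s^k(\omega_N';w)\ge U_k(x_i;\omega_N)-U_k(x_*;\omega_N')$ does not follow and can genuinely fail: any $z$ lying in a sparse region, far from $x_i$, whose $k$-th neighbor distance exceeds $\|z-x_*\|$ strictly gains energy when $x_*$ appears.

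This is exactly the point at which the paper's proof needs an extra ingredient that your argument never invokes. The total increase caused by inserting $x_*$ is supported on the points $z$ with $x_*\in\nn_k(z;\omega_N')$, i.e.\ on the incoming edges of $x_*$ in the adjacency graph $\Lambda_k(\omega_N')$; each such term is at most $M_w(c_0N^{-1/d})^{-s}$ because $x_*$ is $c_0N^{-1/d}$-separated from $\omega_N$, and — crucially — their number is bounded by $n(k,p)$ by Lemma~\ref{lem:few_nns} (bounded in-degree of the nearest-neighbor graph). This is the sum $\Sigma_5$ in the paper's proof, and it contributes an additional $n(k,p)M_wc_0^{-s}$ to the constant; your $C_1=kM_wc_0^{-s}+M_V-\inf_AV$ is missing precisely this term. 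With Lemma~\ref{lem:few_nns} (or some substitute controlling how many points acquire $x_*$ as a new neighbor) inserted at this step, your argument closes and coincides with the paper's; without it, the swap argument as written is incorrect.
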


\subsection{Relation to non-truncated hypersingular Riesz energies} 
\label{subsec:relationHyper}
In this section we will clarify the relation between the Riesz $ k $-energy discussed above and the full hypersingular Riesz energy $ E_s(\omega_N; w, V) $ on $ \mathbb R^d $, defined as
\begin{equation}
    \label{eq:hypersingular}
    E_s(\omega_N; w, V) := \sum_{x\neq y\in\omega_N} w(x,y) \|x-y\|^{-s} +  N^{s/d} \sum_{x\in \omega_N} V( x), \qquad  N\geq 2,\ s > d.
\end{equation}
Just as for $ E_s^k $, we write 
\[
    \mathcal E_s(A,N; w, V) := \inf_{\omega_N \in A^N} E_s(\omega_N; w, V).
\]
We will show that for $ k\to \infty $, the asymptotics of $ \mathcal E_s^k $ approach those for $ \mathcal E_s $. 

The asymptotics of the energy \eqref{eq:hypersingular} as given in \eqref{eq:limits_equal} and behavior of its minimizers are known separately for the case of a constant weight and general external field~\cite{hardinGenerating2017},
and for the case of a nonconstant weight in the absence of an external field \cite{borodachovAsymptotics2008}.   
By the general approach outlined in Section~\ref{sec:outline}, these two results can be combined to obtain the analog of Theorem~\ref{thm:k_asympt} for the non-truncated hypersingular interaction with $ s>d $. We derive this result indirectly, by relating the full interaction~\eqref{eq:hypersingular} to the energies $ E_s^k $.  

In the following theorem, we write
 \begin{equation*}
     C_{s,d}:=   \lim_{N\to\infty}  \,\frac{\mathcal E_s(\q_d,N)}{N^{1+s/d}}, \qquad s > d,
\end{equation*}
where $ \q_d $ is the $ d $-dimensional unit cube. By definition, $ C_{s,d} \geq C_{s,d}^k $, $ k\geq 1 $. We call a sequence of configurations $ \omega_N $, $ N\geq 2 $, {\em asymptotically optimal for $ E_s $}, if 
$$\lim_{N\to\infty}\frac{E_s(\omega_N; w, V)}{\mathcal{E}_s(A,N; w, V)}=1.$$

\begin{theorem}
    \label{thm:s_ge_d_asympt}
    If $ A \subset \mathbb R^{p} $ is an $ (\h_d, d) $-rectifiable compact set with $  \Hd(A) = \mathcal{M}_d(A) $, $s>d$, $ w$ is a CPD-weight, and  $ V $ a lower semicontinuous external field, then \begin{equation}\lim_{k\to \infty}\c^k=\c, \end{equation}  and
    \begin{equation}
        \label{eq:limits_equal}
          \begin{aligned}
        \lim_{k\to \infty}\lim_{N\to\infty} 
        & \frac{ \mathcal{E}^k_s(A,N; w, V)}{N^{1+s/d}}  = \lim_{N\to\infty}  \frac{ \mathcal{E}_s(A,N; w, V)}{N^{1+s/d}}\\
        &= \c\int_A w(x,x) \rho( x)^{1+s/d}\d\Hd(x) + \int_A V( x) \rho( x) \d\Hd(x),
        \end{aligned}
    \end{equation}
    where
    \begin{equation*}
        \rho( x) = \left(\frac{L_1-V( x)}{\c(1+s/d)w( x,  x)}\right)^{d/s}_+,
    \end{equation*} 
     with the constant $ L_1 $  chosen so that $ \rho \d\Hd $ is a probability measure on $A$.
     
    Furthermore, if $ w(x,x) + V(x) $ is finite on a subset of $ A $ of positive $ \h_d $-measure and $ \omegaNseq $ is an asymptotically optimal sequence for $ E_s $ on $A$, then  the corresponding normalized counting measures $\nu(\omega_N)$ converge weak-star to $ \rho \d\Hd $.
\end{theorem}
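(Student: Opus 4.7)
The strategy is to sandwich $\mathcal{E}_s(A,N;w,V)$ between quantities whose $N\to\infty$ asymptotics are already captured by Theorem~\ref{thm:k_asympt}, exploiting the fact that for $s>d$ the Riesz interaction is dominated by short-range contributions, so the tail beyond the $k$-th nearest neighbor is of lower order for well-separated configurations. The starting point is the trivial pointwise inequality $E_s^k(\omega_N; w, V) \leq E_s(\omega_N; w, V)$, valid because $\nn_k(x;\omega_N) \subset \omega_N\setminus\{x\}$ and $w\geq 0$; it yields $\mathcal{E}_s^k(A,N;w,V) \leq \mathcal{E}_s(A,N;w,V)$ for every $k\geq 1$, and in particular $\c^k\leq \c$ upon specializing to $A=\q_d$, $w\equiv 1$, $V\equiv 0$.

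The reverse estimate rests on a tail bound. For any configuration $\omega_N$ in $A$ with separation $\Delta(\omega_N)\geq cN^{-1/d}$, disjoint balls of radius $cN^{-1/d}/2$ centered at its points force the $j$-th nearest neighbor of every $x\in\omega_N$ to lie at distance at least $c'(j/N)^{1/d}$. Combining the convergence $\sum_{j>k} j^{-s/d}\leq C_0\, k^{1-s/d}$ afforded by $s>d$ with boundedness of $w$ and summation over the $N$ points yields
\[
    E_s(\omega_N; w, V) - E_s^k(\omega_N; w, V) \leq K\, N^{1+s/d}\, k^{1-s/d},
\]
where $K$ depends on $w,c,s,d$. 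Applied to an $E_s^k$-minimizer $\omega_N^{k,*}$, whose separation is provided by Theorem~\ref{thm:separation}, this gives
\[
    \mathcal{E}_s(A,N;w,V) \leq E_s(\omega_N^{k,*}; w, V) \leq \mathcal{E}_s^k(A,N;w,V) + K'\, N^{1+s/d}\, k^{1-s/d}.
\]
Dividing by $N^{1+s/d}$, passing $N\to\infty$ via Theorem~\ref{thm:k_asympt}, and then $k\to\infty$, one first obtains $\c^k\to\c$ in the unweighted cube case, and in the general case the identity \eqref{eq:limits_equal}, because the right-hand side of \eqref{eq:main_asymp} depends continuously on $\c^k$ through the density formula \eqref{eq:density} and dominated convergence carries the limit $k\to\infty$ inside the integrals.

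For the weak-star convergence, I would pass to a subsequence of $\omegaNseq$ with $\nu(\omega_N)\weakto \mu$, bound $\liminf_N E_s(\omega_N;w,V)/N^{1+s/d}$ from below by the limit functional evaluated at $\mu$ (using lower semicontinuity together with Theorem~\ref{thm:k_asympt} for each $k$), and then invoke the uniqueness of the minimizer of that functional---namely $\rho\,d\Hd$---to force $\mu=\rho\,d\Hd$. The main obstacle throughout is tracking the $k$-dependence of the separation constant in Theorem~\ref{thm:separation}: one needs $c_k^{-s}k^{1-s/d}\to 0$ for the tail term to vanish uniformly in $N$. Inspection of the proof of Theorem~\ref{thm:separation} should give $c_k$ uniformly bounded below in $k$, as the argument there is local and relies only on the largest of the $k$ nearest-neighbor distances; alternatively one can apply the tail bound to perturbations of $E_s$-minimizers, whose separation is $k$-independent by classical hypersingular-Riesz theory.
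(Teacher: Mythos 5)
Your proposal follows essentially the same route as the paper's proof: the trivial inequality $E^k_s\le E_s$ in one direction, and in the other a shelling/volume tail bound for separated configurations (the paper's Lemma~\ref{lem:offdiagonal}) applied to near-minimizers of $E^k_s$, whose separation is supplied by Theorem~\ref{thm:separation} (this is the paper's Lemma~\ref{lem:k_to_infty}), followed by letting $N\to\infty$ via Theorem~\ref{thm:k_asympt} and then $k\to\infty$, moving the limit inside the integrals. The first genuine gap is the ambient dimension. Your packing argument, for a configuration in $A\subset\mathbb R^p$ with separation $cN^{-1/d}$, only forces the $j$-th nearest neighbor to distance $\gtrsim j^{1/p}N^{-1/d}$, so the per-point tail is $O\bigl(N^{s/d}\sum_{j>k}j^{-s/p}\bigr)$, which converges only when $s>p$; the claimed bound with exponent $1/d$ is valid only for $p=d$. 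Since the theorem covers $(\h_d,d)$-rectifiable $A\subset\mathbb R^p$ with $d<s\le p$ allowed, your estimate fails exactly in that regime. The paper avoids this by proving Lemmas~\ref{lem:offdiagonal} and~\ref{lem:k_to_infty} only for compact subsets of $\mathbb R^d$ and then transferring the asymptotics to rectifiable sets through Lemma~\ref{lem:federer}, using bi-Lipschitz charts together with the known short-range and stability properties of the full energy $E_s$ for $s>d$; your proposal has no counterpart of this reduction, which is also what carries the weighted/external-field statement and the limiting-distribution claim to sets in $\mathbb R^p$.

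Second, your resolution of the $k$-dependence of the separation constant is not sound as stated. Inspection of the proof of Theorem~\ref{thm:separation} gives \eqref{eq:replace_point}, i.e.\ a constant of order $(k+n(k,p))^{-1/s}$, which decays in $k$ rather than being uniformly bounded below; with that constant your requirement $c_k^{-s}k^{1-s/d}\to0$ holds only for $s>2d$. The fallback of applying the tail bound to $E_s$-minimizers does not repair this: it yields $\mathcal E_s(A,N;w,V)\le E^k_s(\omega_N';w,V)+c(k)N^{1+s/d}$ with $\omega_N'$ an $E_s$-minimizer, and the middle quantity is only known to be $\ge\mathcal E^k_s(A,N;w,V)$, so the needed inequality $\mathcal E_s\le\mathcal E^k_s+o_k(1)N^{1+s/d}$ does not follow; one really needs a well-separated near-minimizer of $E^k_s$, exactly as in Lemma~\ref{lem:k_to_infty}. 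In fairness, the paper's Lemma~\ref{lem:k_to_infty} combines Theorem~\ref{thm:separation} and Lemma~\ref{lem:offdiagonal} in the same way without tracking how the separation constant depends on $k_n$, so your main line mirrors the paper here and you deserve credit for flagging the issue; but the specific uniformity claim you rely on is not what the cited proof provides, so this step remains unsubstantiated in your write-up.
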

We wish to emphasize that the last equality in \eqref{eq:limits_equal} for the minimal full Riesz interaction energy is also new since it includes both a weight and an external field.

Recall that for $ d=1,\ s> 1 $ it is known,
\begin{equation*}
 C_{s,1}= 2\zeta(s),\quad s>1,
\end{equation*}
where $ \zeta $ is the Riemann zeta function, see e.g. \cite{FiMaRaSa2004}. The {\it universal optimality} of $ E_8 $ and the Leech lattice means that they minimize all energies with completely monotonic kernels as functions of the distance over discrete sets with fixed density. Such optimality of these lattices was shown by Cohn, Kumar, Miller, Radchenko, and Viazovska \cite{cohnUniversal2019}, following the methods of Viazovska \cite{viazovskaSphere2017a}. The Riesz kernel $ 1/r^s $ is completely monotonic (that is, its derivatives have alternating signs), and as a result, $ \c $, $ d=8,24 $, is related to the respective lattice as
\begin{equation}\label{eq:conj_val}
 \c=|\Lambda_d|^{s/d}\zeta_{\Lambda_d}(s),\qquad s>d, \quad d = 8,24.
\end{equation}

Here $ \Lambda_d $  denotes either $ E_8 $ or the Leech lattice; $ |\Lambda_d| $ stands for the volume of the fundamental cell of $ \Lambda_d $, and $ \zeta_{\Lambda_d} $ is the corresponding Epstein  zeta-function.  
The exact value of $ \c $ is unknown for all the other pairs $ s,d $. In dimensions $ d= 2,\ 4$, the conjectured value is also given by the expression \eqref{eq:conj_val} with $ \Lambda_d $, respectively, the hexagonal and $ D_4 $ lattices \cite[Conj. 2]{brauchartNextorder2012}. It is easy to show \cite[Prop. 1]{brauchartNextorder2012} that the  conjectured values \eqref{eq:conj_val} are upper bounds for their respective $ \c $.

\subsection{\texorpdfstring{$\Gamma$}{Gamma}-convergence}
For the hypersingular kernel, uniqueness of the limiting distribution of global minimizers is due to the displacement convexity, in the sense of McCann \cite{mccannConvexity1997}, of the limiting continuous functional (see equation \eqref{eq:gamma_limit} below), which can be obtained by treating $ E_s^k $ as defined on counting probability measures, and then passing to the $ \Gamma $-limit. In the paper~\cite{paper2} we demonstrate that this property is common to all short-range interactions with scale-invariant minimizers. In the present discussion we will derive the $ \Gamma $-limit of $ k $-nearest neighbor energies, as a typical case of a short-range interaction.

We first recall the notion of $ \Gamma $-convergence:
\begin{definition}[\cite{degiorgiSu1975}]
    Let $ X $ be a metric space. Suppose that functionals $ F,\, F_N:X\to \mathbb R,\, N \geq 1, $ satisfy
    \begin{gammalist}
        \item\label{it:gamma_lower} for every sequence $ \{  x_N\}\subset X$ such that $ x_N \to x,\, N\to \infty $, there holds $ \liminf_{N\to\infty} F_N(x_N) \geq F(x) $;

        \item\label{it:recovery_seq} for every $ x\in X $ there exists a sequence $ \{x_N\} \subset X $ converging to it and such that $ \lim_{N\to\infty} F_N(x_N) = F(x) $.  
    \end{gammalist}
    We shall then say that the sequence $ \{F_N\} $ is $ \Gamma $-\textit{converging} to the functional $ F $ on $ X $ with the metric topology; in symbols, $ \displaystyle \glim_{N\to\infty} F_N = F .$
\end{definition}
In our setting, the underlying metric space $ X = \mathcal P(A) $, the space of probability measures on $ A $ with a metric corresponding to the weak$ ^* $ topology; functionals $ F_N(\mu) $ are given by $ E_s^k(\omega_N; w,V) $ when $ \mu = \nu(\omega_N) $ is a counting measure for some $ \omega_N $, see \eqref{eq:counting_measure}, and equal to $ +\infty $ otherwise; see Theorem~\ref{thm:gamma}.
To give the formal definitions, denote by $ \mathcal P_N(A) $ the class of counting measures of $ N $-point tuples in $ A \subset \mathbb R^p $:
\[
    \mathcal P_N(A) := \left\{ \nu(\omega_N) : \omega_N \in A^N \right\}.
\]
In the following result, $ C^k_{s,d} $ is as in~\eqref{CsdkDef0}.

\begin{theorem}
    \label{thm:gamma}
    Suppose $ A \subset \mathbb R^p $ is $ (\h_d,d) $-rectifiable, $ w $ is a  CPD-weight and  $ V $ is a  lower semicontinuous external field.  For $N\ge 1$, let  ${\mathcal F}_N(\cdot; w, V)$ be the functional on $ \mathcal P(A) $  defined   by
    \[
        {\mathcal F}_N(\mu; w, V) := 
        \begin{cases}
            E_s^k(\omega_N;w,V), &\text{ if }\mu = \nu(\omega_N)\in \mathcal P_N(A);\\
            +\infty,                                   & \text{otherwise,}
        \end{cases}
    \]
    and 
    \begin{equation}
        \label{eq:gamma_limit} 
        \f(\mu; w, V) :=   
        \begin{cases}
            C^k_{s,d} \int_A w(x,x)    \rho(x)^{1+s/d}\,d\h_d(x)+
            \int_A V(x) \rho(x) \,d\h_d(x), & \text{if $\mu\ll \h_d$},\\ 
            +\infty, & \text{otherwise},
        \end{cases}
    \end{equation}
    where $\rho$ is the Radon-Nikodym derivative of $\mu$ with respect to $\h_d$.
    Then
    \[
        \glim_{N\to\infty}\frac{{\mathcal F}_N(\cdot\,; w,V) }{N^{1+s/d}} = \f (\cdot\,;w,V)
    \]
    on $ \mathcal P(A) $ equipped  with the weak-star topology.
\end{theorem}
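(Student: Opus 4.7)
The plan is to establish the two $\Gamma$-convergence inequalities separately, using Theorem~\ref{thm:k_asympt} together with a ``dual external field'' construction implicit in~\eqref{eq:density}. Equation~\eqref{eq:density} can be read in reverse: given a reasonably regular density $\rho$, the field
\[
    \tilde V_\rho(x) := L - C^k_{s,d}(1+s/d)\, w(x,x)\, \rho(x)^{s/d},
\]
with $L$ any constant, is lsc whenever $\rho$ is upper semicontinuous and $w(x,x)$ is continuous, and by the corollary to Theorem~\ref{thm:k_asympt} it renders $\rho$ the optimal density for the problem $(w, \tilde V_\rho)$: every $(k,s,w,\tilde V_\rho)$-asymptotically optimal sequence satisfies $\nu(\omega_N) \weakto \mu := \rho\, d\h_d$. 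The additivity of $E_s^k$ in the field,
\[
    E_s^k(\omega_N; w, V) = E_s^k(\omega_N; w, \tilde V_\rho) + N^{s/d}\sum_{x \in \omega_N}\bigl(V(x) - \tilde V_\rho(x)\bigr),
\]
is the mechanism through which the $\Gamma$-limit will be assembled.

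For the liminf inequality~\ref{it:gamma_lower}, suppose $\nu(\omega_N) \weakto \mu$. In the principal case $\mu = \rho\, d\h_d$ with $\rho$ continuous and bounded away from $0$, combine the above decomposition with the trivial bound $E_s^k(\omega_N; w, \tilde V_\rho) \geq \mathcal E_s^k(A, N; w, \tilde V_\rho)$, divide by $N^{1+s/d}$, and pass to $\liminf_N$. Theorem~\ref{thm:k_asympt} identifies the limit of the first summand as $\f(\mu; w, \tilde V_\rho)$, while the Portmanteau theorem for lsc integrands controls the second against $\int(V - \tilde V_\rho)\, d\mu$. Algebraic rearrangement produces $\f(\mu; w, V)$ exactly. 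A general $\mu = \rho\, d\h_d$ with $\rho \in L^{1+s/d}$ is reduced to this case by an inner approximation $\rho_n \nearrow \rho$ by continuous positive densities, with Fatou applied to the $\rho^{1+s/d}$ integral. For $\mu$ with a singular part (or $\rho \notin L^{1+s/d}$), one must show that the scaled energies diverge; this is done by a localization argument, where the points of $\omega_N$ falling in a small ball $B$ on which $\mu$ concentrates contribute, by a local instance of Theorem~\ref{thm:k_asympt}, an energy of order $(\mu(B) N)^{1+s/d} / \h_d(B)^{s/d}$, which is forced past any fixed bound along a sequence of balls witnessing the singular mass.

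For the recovery sequence~\ref{it:recovery_seq}, assume first $\rho$ is continuous and strictly positive, and take $\omega_N^*$ to be $(k,s,w,\tilde V_\rho)$-asymptotically optimal. Theorem~\ref{thm:k_asympt} supplies $\nu(\omega_N^*) \weakto \mu$ and the convergence of the first summand of the decomposition to $\f(\mu; w, \tilde V_\rho)$. The second summand $\tfrac{1}{N}\sum(V - \tilde V_\rho)(x)$ is controlled from below by Portmanteau; the matching upper bound is obtained by feeding $\omega_N^*$ into the liminf inequality established above, pinning the limit to $\f(\mu; w, V)$. A general $\rho$ is handled by approximating $\mu$ by $\mu_n = \rho_n\, d\h_d$ with $\rho_n$ continuous and strictly positive, chosen so that $\mu_n \weakto \mu$ and $\f(\mu_n; w, V) \to \f(\mu; w, V)$, followed by a diagonal extraction. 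The main obstacle throughout is the interplay between the lack of continuity of $V$ (only lsc) and the need for a limit rather than a liminf in~\ref{it:recovery_seq}; the resolution is not to sharpen analytic approximations but to bootstrap the two $\Gamma$-convergence inequalities against each other, using the liminf bound to upgrade the Portmanteau lower bound into an equality along the prescribed sequence.
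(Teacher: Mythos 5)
Your scheme is genuinely different from the paper's: the paper proves both $\Gamma$-inequalities by localization (Vitali covering, the local measures $\U(\omega_N;\cdot)$, and a hands-on recovery construction that plants near-minimizers of the unweighted problem in small closed sets $S_m$ on which $w\le w(x_m,x_m)+\epsilon$ and $V\le V(x_m)+\epsilon$), whereas you propose a duality argument through the field $\tilde V_\rho$ of \eqref{eq:density}. The liminf half of your plan is essentially the classical linearization of the convex functional and can be made to work, but not quite as written: (i) lower semicontinuity of $\tilde V_\rho$ and of $V-\tilde V_\rho$ (needed both to invoke Theorem~\ref{thm:k_asympt} and for Portmanteau) requires $w(x,x)\rho(x)^{s/d}$ to be continuous, while a CPD-weight is only $\h_d$-a.e.\ continuous on the diagonal; (ii) the reduction ``$\rho_n\nearrow\rho$, apply the principal case'' does not parse, since the weak-star limit of $\nu(\omega_N)$ is $\mu$ and cannot be replaced by $\rho_n\,d\h_d$; what the dual field built from a (renormalized) $\rho_n$ actually delivers is the tangent-line bound $C^k_{s,d}\int w\bigl(\rho_n^{1+s/d}+(1+s/d)\rho_n^{s/d}(\rho-\rho_n)\bigr)d\h_d+\int V\rho\,d\h_d$, after which one passes to the limit in $n$.

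The genuine gap is in the recovery sequence \ref{it:recovery_seq}. Along your $(k,s,w,\tilde V_\rho)$-asymptotically optimal $\omega_N^*$ you know only two things about the field term: Portmanteau gives $\liminf_N\frac1N\sum_x(V-\tilde V_\rho)(x)\ge\int(V-\tilde V_\rho)\,d\mu$, and property \ref{it:gamma_lower} gives $\liminf_N E_s^k(\omega_N^*;w,V)/N^{1+s/d}\ge\f(\mu;w,V)$. Both are lower bounds pointing the same way; the recovery property needs the opposite inequality $\limsup_N\frac1N\sum_x V(x)\le\int V\,d\mu$, and ``feeding $\omega_N^*$ into the liminf inequality'' cannot produce it -- an upper bound on the limsup of the energy is exactly the statement being proved, so the bootstrap is circular. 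Nor is the missing bound automatic: $V$ is only lower semicontinuous, and asymptotic optimality for $(w,\tilde V_\rho)$ says nothing about where points sit relative to the discontinuity set of $V$. For instance, with $w\equiv1$ and $V=1+\mathbbm{1}_U$ for an open $U$ whose relative boundary has positive $\h_d$-measure, one may nudge the points of an asymptotically optimal sequence lying on $\partial U$ into $U$ by $o(N^{-1/d})$; this preserves asymptotic optimality (separation is of order $N^{-1/d}$ by Theorem~\ref{thm:separation}) and the weak-star limit $\mu$, yet $\frac1N\sum_xV(x)\to 1+\mu(U)+\mu(\partial U)>\int V\,d\mu$, so such a sequence is not a recovery sequence. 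Hence the upper control of $V$ must be built into the construction -- as the paper does by confining the planted minimizers to sets where $V$ and $w$ are bounded above by their Lebesgue-point values plus $\epsilon$ -- rather than deduced afterwards from lower-semicontinuity-type bounds.
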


Comparison of Theorem~\ref{thm:gamma} with the classical results for 2-point interactions with integrable kernel reveals the difference in the asymptotic structures of energies for the long-range and short-range energies: limiting functionals of the former depend quadratically (through a double integral) on the limiting measure; on the other hand, in~\eqref{eq:gamma_limit} we have single integrals.

\subsection{Notational conventions}
\label{sec:not_conv}

Let us summarize the notation introduced in previous sections, as well later in the paper.
It is assumed that $ p, d $ are integer with $ p \geq d > 0 $. By $ \|\cdot \| $ we denote a fixed norm on $ \mathbb R^p $, not necessarily Euclidean, as well as its restriction to $ \mathbb R^d $, which we treat as a subset of $\mathbb R^p$.
Closed balls in the ambient space (either $ \mathbb R^d $ or $ \mathbb R^p $) with respect to these norms are denoted by $ B(x,r) $; here  $ x $ is the center of the ball, $ r $ stands for the radius. For $r>0$, the closed $r$-neighborhood of a compact set $ A $ is denoted by $ A_r = \bigcup_{x\in A} B(x,r) $. Notation $ v_d $ stands for the volume of the unit ball in $ \mathbb R^d $. 

A ``cube'' always refers to a closed cube with sides parallel to the coordinate axes. The unit cube in $ \mathbb R^d $, centered at the origin, is denoted by $ \q_d = [-1/2, 1/2]^d $.                                      

The $ d $-dimensional Lebesgue and Hausdorff measure are denoted by $ \mathcal L_d $ and $ \mathcal H_d $; the latter is normalized so as to coincide with $ \mathcal L_d $ on isometric embeddings from $ \mathbb R^d $ to $  \mathbb R^{p} $. 
Weak$ ^* $ convergence of a sequence of measures $ \mu_n$, $ n\geq 1 $, to $ \mu $ is denoted by $ \mu_n \weakto \mu $, $ n\to \infty $. 
Notation $ \mathcal M_d $ stands for the $ d $-dimensional Minkowski content in $ \mathbb     R^{p} $. 

The adjacency graph of $ \omega_N $, introduced in Section~\ref{sec:outline} and corresponding to the nearest neighbor relation, is denoted by $ \Lambda_k(\omega_N) $. Notation $ \prec_x $ stands for the ordering of points in $ \omega_N $ by indices and distance to a given point $ x\in\mathbb R^p $. The $ l $-th element of $ \omega_N \setminus \{ x \} $ under the ordering $ \prec_x $ is written as $ (x;\omega_N)_l $ (note that the set difference here removes only the first occurrence of $ x $ in $ \omega_N $). Given $ x\in\omega_N $, we write $ I(x;\omega_N) $ for the index of the first occurrence of $ x $ as an entry of tuple $ \omega_N $.

A bijective map $ \psi:\mathbb R^d\to \mathbb R^p $ is said to be bi-Lipschitz with the constant $ (1+c) $, $ c> 0 $, if there holds
\[
    (1+c)^{-1} \|x-y \| \leq \|\psi(x) - \psi(y) \| \leq (1+c) \|x-y \|
\]
for every pair $ x,y\in \mathbb R^d $.

In cases when the multiplicative weight and/or external field are absent from our             considerations, we write simply $ E^k_s(\omega_N; w) $ and $E^k_s(\omega_N)$ in place of $ E^k_s(\omega_N; w, 0) $ and $ E^k_s(\omega_N; 1, 0) $, respectively. Finite positive constants that may depend on some arguments are denoted $ C(\ldots) $; we can sometimes refer to different constants of this form in different parts of an equation, using the same symbol $ C $.

\section{Numerical aspects and experiments}
\label{sec:num_experiments}
\begin{figure}[ht]
    \centering
    \includegraphics[width=\figwidth]{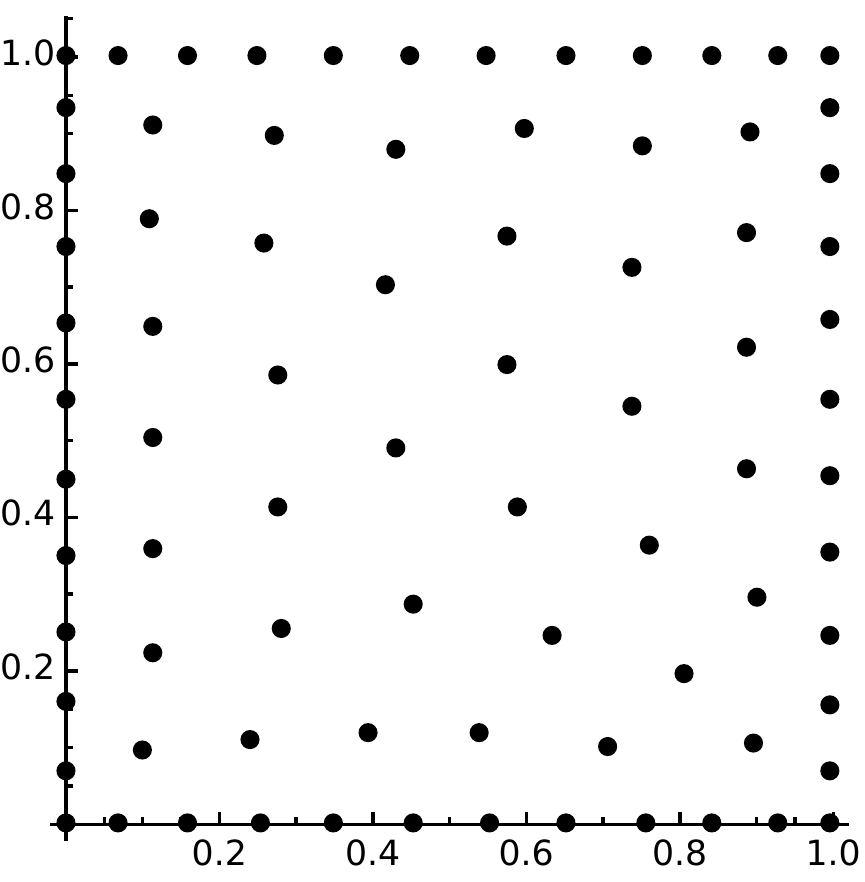}
    \hspace{.05\linewidth}
    \includegraphics[width=\figwidth]{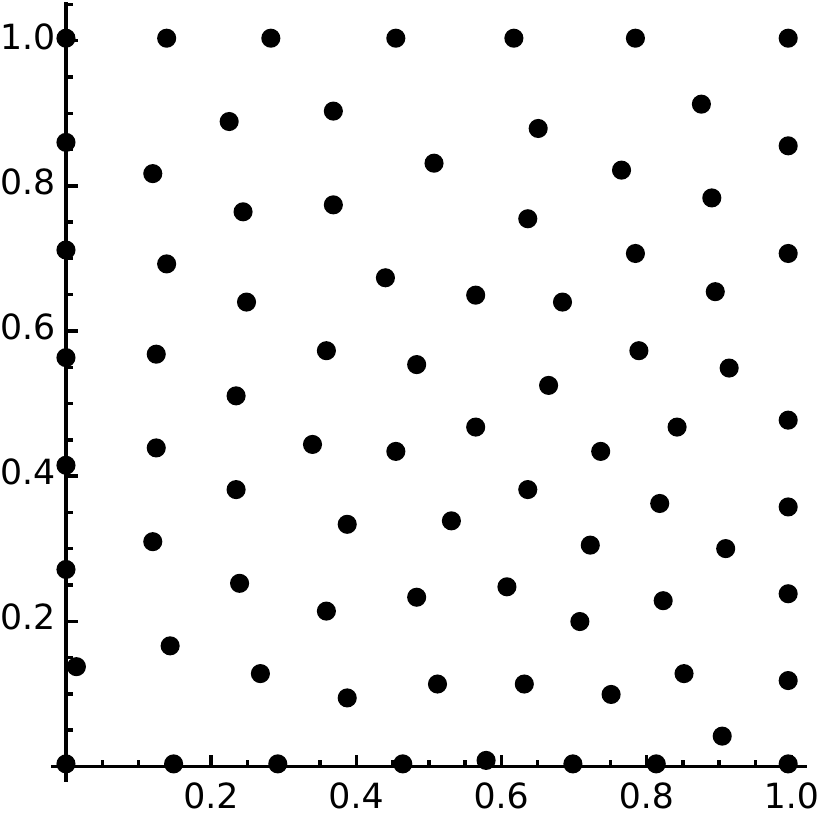}
    \caption{Left: approximate minimizer of the full Riesz interaction with $ s=1 $; right: approximate minimizer of the $ k $-nearest neighbor interaction with $ k=2 $ and $ s=1 $. In both images, $ N=80 $.}%
    \label{fig:squares}
\end{figure}
\begin{figure}[ht]
    \centering
    \includegraphics[width=\figwidth]{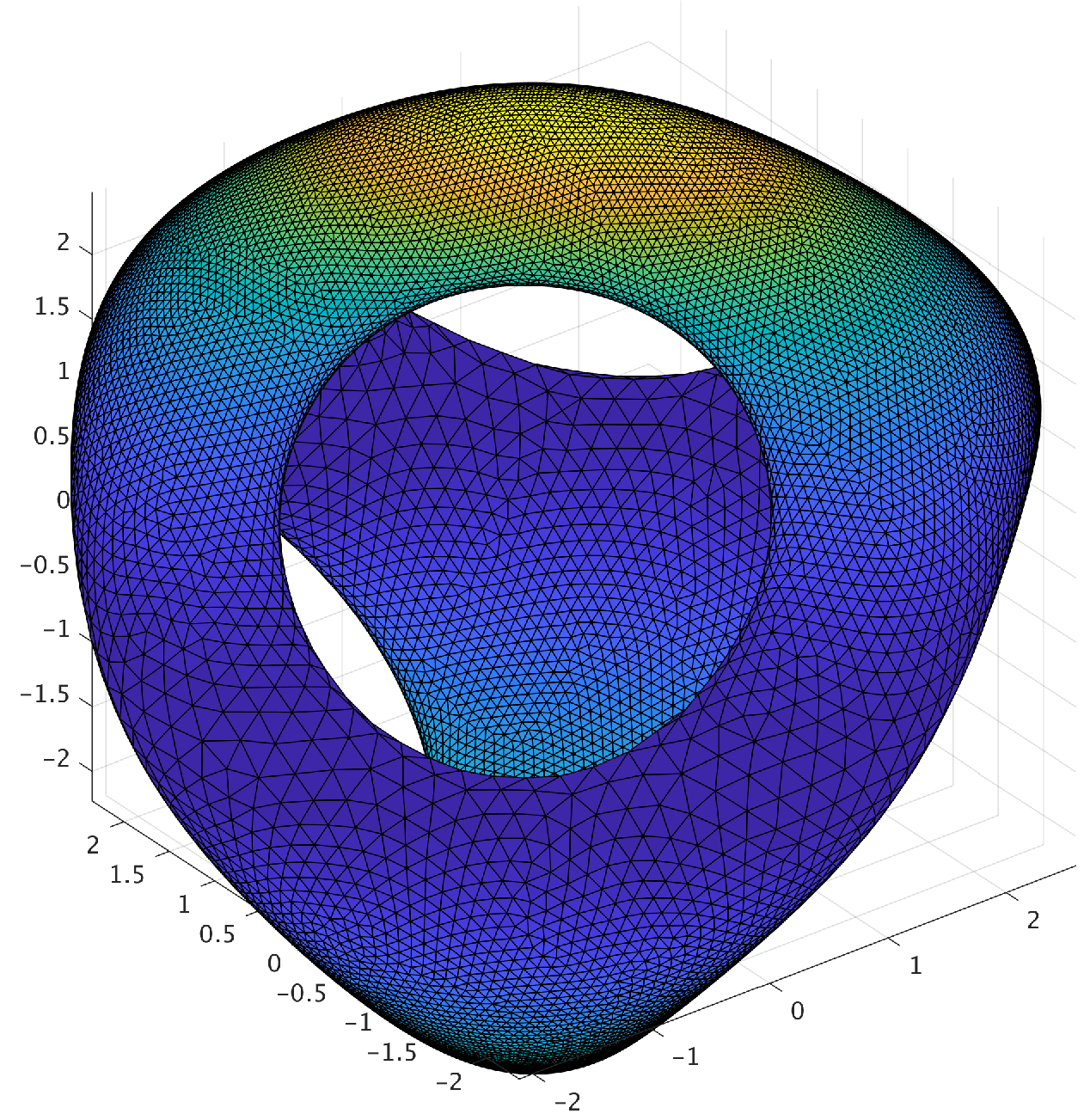}
    \hspace{.05\linewidth}
    \includegraphics[width=\figwidth]{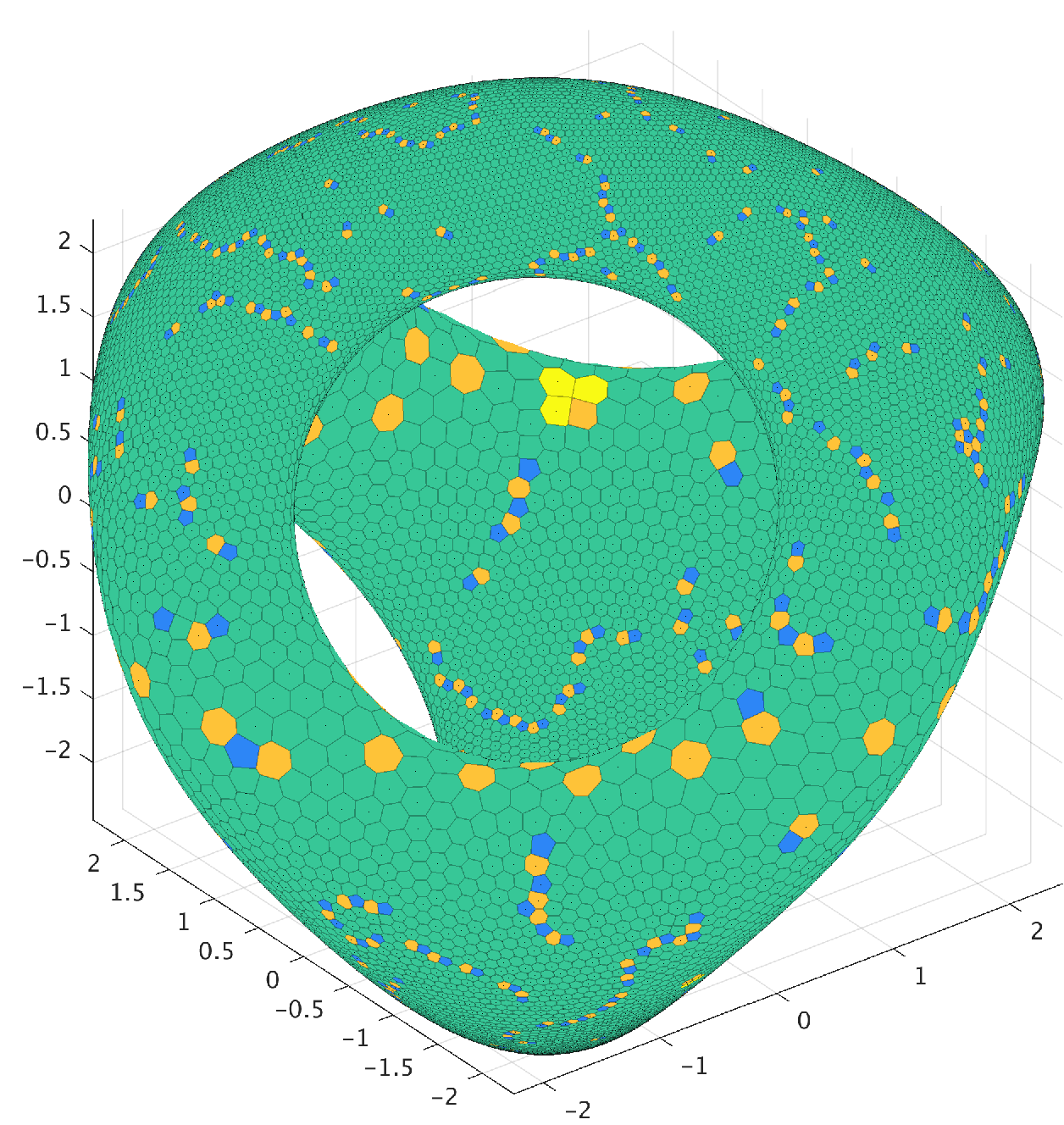}
    \caption{(Color online.) Approximate minimizing configuration of 20,000 points on a genus 3 surface in the $x_1x_2x_3$-cube $[-2,2]^3$ for the energy $E^k_s$ with $s=4$,  $k=30$ and weight $w$ chosen to give density $ \rho $ proportional to $ x_3^2 $. External field $V\equiv0$. Left: Delaunay triangulation; right: Voronoi tessellation. }%
    \label{fig:handles}
\end{figure}

 The worst case complexity 
for constructing the $k$-nearest neighbor ($k$-nn) graph for an $N$-point configuration is $O(N \log N)$ floating point computations (FLOPS). Thus,  the cost of evaluating $E_s^k$ (or its gradient) is also $O(N\log N)$ compared with  $O(N^2)$ FLOPS for the full interaction energy $E_s$.
 However, for sufficiently well-separated point configurations, the  $k$-nn algorithm  reduces to $O(kN)$ and thus the cost of one energy or gradient evaluation is also $O(kN)$.   

An interesting question is how the choice of $ k $ influences the speed of convergence of a given optimization algorithm such as gradient descent (we remark that $E_s^k$ is not differentiable when there are ties for the $k$-th nearest neighbor). 
This  issue is not explored here, but is left for future investigations. 

In Figure~\ref{fig:squares}, we show approximate energy minimizers for $N=80$ points in the unit square for the full $E_s$ energy (left) and  $E_s^k$ energy (right) with $s=1$, $k=2$, $w\equiv 1$, and $V\equiv 0$.    Notice that the full kernel energy yields higher density distribution at the boundary of the square while, in accordance with Theorem~\ref{thm:k_asympt}, the points are more uniformly spaced when the interactions are restricted to the $k$-nearest neighbors. The two approximate minimizers were computed using Mathematica's IPOPT interface and the built-in simulated annealing algorithm, respectively.

It has been demonstrated that the energies $ E_s^k $ can be used for efficient discretization of complicated surfaces, see \cite{vlasiukFast2018, vlasiukOVlasiuk}.  Here we illustrate the effectiveness of the algorithm in Figure~\ref{fig:handles} which shows an approximate minimizing configuration of $ N =20\,000 $ points on an algebraic surface with $s=4$, $k=30$, $V\equiv 0$, and a nonuniform weight.  The left image  shows 
the Delaunay triangulation of the configuration colored according to point  density, where lighter colors reflect higher density. The right image shows the Voronoi tessellation of the surface generated by the configuration. Cells of the tessellation are colored according to their number of edges. Notice that the majority of cells are hexagons (light green).

\section{Geometry of nearest neighbor interactions}
\label{sec:nearest_neighbor}

\subsection{Proof strategy and adjacency graph of nearest neighbors}
\label{sec:outline}
Our strategy, as put forward in \cite{paper2}, is to show that the unweighted functional $ E_s^k $ is a so-called short-range interaction, that is, it has the following four essential properties. Note that compared to paper~\cite{paper2}, we strengthen and simplify the formulations, as appropriate for our context.
\begin{itemize}
    \item[(i)] \ul{Monotonicity}: If $ A\subset B \subset \mathbb R^p$, then, by definition,
    \begin{equation}
        \mathcal E^k_s(A,N)  \geq \mathcal E^k_s(B,N),\qquad  N\ge 1.
    \end{equation}
    \item[(ii)] \ul{Asymptotics on cubes}:    For the unit cube $  \q_d \in \mathbb R^d$, the following limit exists and is positive and finite 
        \begin{equation}
            C_{s,d}^k := \lim_{N\to \infty} \frac{\mathcal E_s^k(\q_d, N)}{N^{1+s/d}}.
        \end{equation}
        This fact will be established in Lemma~\ref{thm:cube}.
    \item[(iii)] \ul{Short-range property}: The energy of a sequence of configurations contained in a pair (or finite collection) of disjoint compact sets   is asymptotically the sum of energies on individual sets. Suppose $ A_1, A_2 \subset \mathbb R^p $ are disjoint compact sets.   If $ \omegaNseq$ is a sequence of $N$-point configurations in  $ A_1\cup A_2 $ for $ N \geq 1 $,   then 
        \begin{equation}
            \label{eq:short_range}
            \lim_{N\to\infty} \frac{  E_s^k(\omega_N\cap A_1)+E_s^k(\omega_N\cap A_2)}{E_s^k(\omega_N)} = 1.
        \end{equation}
        The short-range property will be obtained in Lemma~\ref{lem:short_range}.
    \item[(iv)] \ul{Stability}: The minimum energy asymptotics  is stable under small perturbations (in terms of Minkowski content) of the set; that is,
    for every compact $ A\subset \mathbb R^p $ and $ \epsilon \in (0,1) $ there is some $\delta=\delta(\epsilon, s,k,p,d,A)>0$ such that for any  compact $ D \subset A $ satisfying $ \mathcal M_d(D) \geq (1 - \delta)\,\mathcal M_d(A)$, we have
    \begin{equation}
        \label{eq:set_conts}
        \liminf_{N\to \infty}\frac{\mathcal E^k_s(A,N)}{N^{1+s/d}} \geq (1-\epsilon)
        \liminf_{N\to \infty} \frac {\mathcal E^k_s(D,N)}{N^{1+s/d}}, \quad 
        \limsup_{N\to \infty}\frac{\mathcal E^k_s(A,N)}{N^{1+s/d}} \geq (1-\epsilon)
        \limsup_{N\to \infty} \frac {\mathcal E^k_s(D,N)}{N^{1+s/d}}.
    \end{equation}
    In addition, for $ p=d $, $ \delta $ is independent of $ A $. This result will be established in Lemma~\ref{lem:stable}.
\end{itemize}
Once these properties have been established for the unweighted interaction $ E_s^k $, existence of the asymptotics on compact sets in $ \mathbb R^d $ follows as an extension of the statement (ii) for cubes. 
The statement of Theorem~\ref{thm:k_asympt}, applying to $ ({\mathcal H}_d,d) $-rectifiable sets $ A\subset \mathbb R^p $ with $ \mathcal H_d(A) =\mathcal M_d(A) $, is then derived by approximating such sets with bi-Lipschitz parametrizations, an argument going back to Federer~\cite{federerGeometric1996a}.

\bigskip

For some of the proofs in the sequel it will be useful to order the entries of    $ \omega_N\in (\mathbb R^p)^N $   by their distance to a given $ x\in \mathbb R^p $; as was mentioned in Section~\ref{sec:intro_discrete}, the interaction $ E_s^k $ selects points with smaller indices in the case of equal distance, so we will order lexicographically, first by distance, then by index. Formally, the order $ \prec_x $ on entries of $ \omega_N $ is defined like so:
\[
    y\prec_x z\qquad \stackrel{def}{\iff} \qquad 
    \begin{aligned}
        \|y-x\| &< \|z-x\|\\
        \text{ or }\\
        \|y-x\| &= \|z-x\| \text { and } I(y;\omega_N) < I(z;\omega_N),
    \end{aligned}
\]
where as before, $ I(y;\omega_N) $ is the index of the first occurrence of $ y $ as an entry of $ \omega_N $. 
The notation  $ \nn_k(x;\omega_N) $  introduced above then stands for the tuple of the first $ k $ entries of $ \omega_N\setminus \{ x \} $ with respect to the ordering $ \prec_x $. We further write $ (x;\omega_N)_l $ for the $ l $-th entry of $ \omega_N \setminus \{x\} $ with respect to $ \prec_{x} $, $ 1\leq l \leq N-1 $. In particular, distances $ \|x - (x;\omega_N)_l\|  $ are nondecreasing in $ l $ for a fixed $ x $ and $ \omega_N $.

Let
\[
    \Lambda_k(\omega_N) := \{ (x,y) : x,y\in\omega_N, \ y\in \nn_k(x;\omega_N) \},
\]
the set of ordered pairs of entries of $ \omega_N $, corresponding to the relation ``$y$ is among the $ k $ nearest neighbors of $ x $''. Notice that this relation is not symmetric. In what follows, it will be occasionally convenient to think of $ \Lambda_k(\omega_N) $ as the set of edges in the oriented graph $ (\mathcal V, \mathcal E) = \left(\{ x_i \}_1^N,\ \Lambda_k(\omega_N)\right) $ with $ \{ x_i \}_1^N $ being the multiset of entries from $ \omega_N $. Due to this, we will refer to $ \Lambda_k $ as the {\it adjacency graph} of $ \omega_N $.

\subsection{Main geometric lemma and local properties of near-minimizers}
Using $ \Lambda_k(\omega_N) $, we can write
\begin{equation}\label{eq:k_energy}
    E^k_s(\omega_N; w, V) = \sum_{(x,y)\in\Lambda_k(\omega_N)} w( x,  y) \|x - y \|^{-s} +  N^{s/d} \sum_{x\in \omega_N} V(x), \qquad  s \neq 0.
\end{equation}
As before, the function $ w $ is assumed to be a CPD-weight on $ A \times A $. 
The external field $ V $ is assumed to be lower semicontinuous on $ A $ (and therefore bounded below there). 

Our eventual goal is to verify the properties from Section~\ref{sec:outline}. It is easy to see that restricting interactions to $ k $ nearest neighbors guarantees that $ E^k_s $ is in a sense local. Without such restriction, the locality does not hold when $ s < d $, as is well-known from classical potential theory. Since $ s> 0 $, the singular nature of the interaction on the diagonal results in that the pointwise separation is of the optimal order for near-minimizers, as will be shown in Theorem~\ref{thm:separation}.

\medskip

We will first obtain the following basic fact about the set $ \Lambda_k(\omega_N) $.
\begin{lemma}
    \label{lem:few_nns}
    Fix a configuration $ \omega_N \in (\mathbb R^d)^N $ of $ N $ distinct points. For any $ y\in \omega_N $, the number of points $ x $ in $ \omega_N $ such that $ y $ is one of $ k $ nearest neighbors of $ x $ is bounded by $ n(k,d) $, depending only on the number of neighbors $ k $ and the dimension $ d $. That is, 
    \[
        \#\{ x\in \omega_N : y\in \nn_k(x;\omega_N) \} \leq n(k,d).
    \]
\end{lemma}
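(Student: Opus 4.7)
The plan is to bound the count via a cone-covering argument centered at $y$, establishing that within each of a bounded number of thin cones, at most $k$ points can have $y$ as a $k$-nearest neighbor. Since the norm $\|\cdot\|$ is fixed, the unit sphere $\Sigma:=\{u\in\mathbb{R}^d:\|u\|=1\}$ is compact, so for any fixed $\epsilon\in(0,1)$ we can extract a finite open cover $U_1,\dots,U_M$ of $\Sigma$ consisting of sets of diameter (in the norm $\|\cdot\|$) strictly less than $\epsilon$. The minimal such $M$ depends only on the norm, hence only on $d$. Associating to each $U_i$ the open cone $C_i:=\{y+tu:t>0,\,u\in U_i\}$, we get a covering $\mathbb{R}^d\setminus\{y\}\subseteq\bigcup_{i=1}^M C_i$.

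Fix one cone $C_i$ and let $x_1,\dots,x_m$ denote the points of $\omega_N$ lying in $C_i$ for which $y\in\nn_k(x_j;\omega_N)$, ordered so that $\|x_1-y\|\le\|x_2-y\|\le\cdots\le\|x_m-y\|$. Write $x_j=y+t_j u_j$ with $u_j\in U_i$ and $t_j=\|x_j-y\|$. The main step is to show that for every $1\le i'<j\le m$, the point $x_{i'}$ is strictly closer to $x_j$ than $y$ is. By the triangle inequality,
\[
    \|x_j-x_{i'}\|=\|t_j u_j-t_{i'}u_{i'}\|\le (t_j-t_{i'})\|u_j\|+t_{i'}\|u_j-u_{i'}\|<(t_j-t_{i'})+t_{i'}\epsilon<t_j=\|x_j-y\|,
\]
using $\|u_j\|=1$, $\|u_j-u_{i'}\|<\epsilon<1$, and $t_{i'}>0$. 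Hence $x_1,\dots,x_{j-1}$ are all strictly closer to $x_j$ than $y$, so in the ordering $\prec_{x_j}$ they precede $y$. Since $y\in\nn_k(x_j;\omega_N)$, there can be at most $k-1$ entries of $\omega_N\setminus\{x_j\}$ strictly preceding $y$, which forces $j-1\le k-1$, i.e.\ $m\le k$.

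Summing over the $M$ cones yields $\#\{x\in\omega_N:y\in\nn_k(x;\omega_N)\}\le Mk$, so we may take $n(k,d):=Mk$. The main (and essentially only) obstacle is the cone-width estimate: verifying that the calibration $\epsilon<1$ actually produces the desired inequality $\|x_j-x_{i'}\|<\|x_j-y\|$ for a general, possibly non-Euclidean norm. The computation above handles this uniformly in all norms by a direct use of the triangle inequality and positive homogeneity, so no appeal to an inner-product structure (or to norm equivalence beyond the compactness of $\Sigma$) is required. Distinctness of the points in $\omega_N$ ensures the ordering by distance is strict and that $y$ itself is not among the $x_j$, so the count is well-posed.
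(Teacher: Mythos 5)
Your proof is correct and follows essentially the same route as the paper: cover the directions emanating from $y$ by finitely many narrow sectors and show that at most $k$ points per sector can have $y$ among their $k$ nearest neighbors, since points of the same sector nearer to $y$ are strictly closer to the farther point than $y$ is, giving the bound (number of sectors)$\times k$. The only real difference is cosmetic: you quantify narrowness by sets of small diameter on the unit sphere and use the triangle inequality (so the argument works verbatim for any norm), whereas the paper projects onto a small sphere centered at $y$ and works with Euclidean spherical caps of angular radius $\pi/6$, using the angle estimate $\angle x_i y x_{i'}<\pi/3$ to get the same distance comparison.
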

This lemma can be interpreted in graph-theoretic terms as follows. Consider expression~\eqref{eq:k_energy}; the first sum involves terms $ w(x,y) \|x-y\|^{-s}  $ for oriented pairs $ (x,y) \in \Lambda_k(\omega_N) $. By definition, the outgoing degree of every vertex in the graph $\left(\{ x_i \}_1^N,\ \Lambda_k(\omega_N)\right)$ is $k$; the above lemma shows further that the maximal incoming degree in the graph is bounded by $ n(k,d) $. It is also useful to note that $ y $ does not have to be an element of $ \omega_N $ for the result to hold.
\begin{proof}
    Fix $ y \in \omega_N $ and denote  $ \omega_{N,y} = \{ x\in \omega_N : y\in \nn_k(x;\omega_N) \} $. Choose the radius $ r_y> 0 $ so that $ B(y,r_y) $ does not contain any points from $ \omega_N $ except $ y $.  Let $ \pi_S $ be the radial projection onto the sphere $ S := \partial B(y,r_y) $ and consider the image $ \pi_S(\omega_{N,y})$, see Figure~\ref{fig:nns}.
    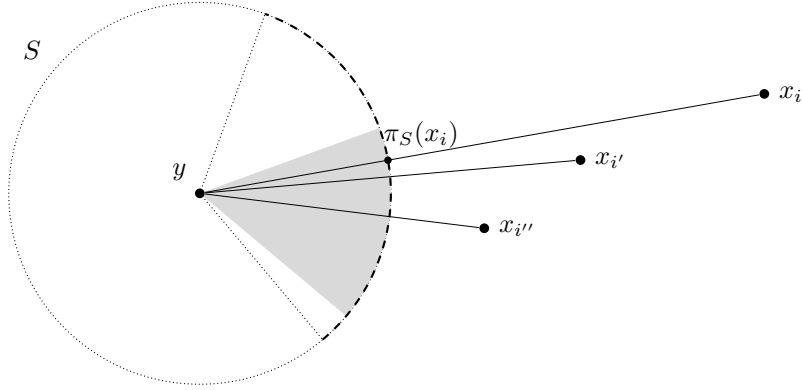
\begin{figure}[t]
        \centering
        \begin{tikzpicture}
            \draw [densely dotted] (0,0) circle [radius=1in];
            \path [fill=gray!30] (0,0) -- (-40:1in) arc (-40:20:1in) -- (0,0) ;

            \draw [dashed, thick] (-50:1in)  arc (-50:70:1in) ;
            \draw [densely dotted] (0,0) -- (70:1in);
            \draw [densely dotted] (0,0) -- (-50:1in);
            
            \node [inner sep=1pt, label=140:$S$] at (140:1in)  {};

            \node [circle, fill, inner sep=1.3pt, label=140:${y}$] (xj) at (0,0)  {};
            \node [circle, fill, inner sep=1.3pt, label=0:$x_i$] (xi) at (10:3in)  {};
            \node [circle, fill, inner sep=1.3pt, label=0:$x_{i'}$] (xip) at (5:2in)  {};
            \node [circle, fill, inner sep=1.3pt, label=0:$x_{i''}$] (xipp) at (-7:1.5in)  {};
            \node [circle, fill, inner sep=1pt] (pixi) at (10:1in)  {};
            \node at (15:1.2in) {$\pi_S(x_i)$};

            \draw [very thin] (xj)  -- (xi);
            \draw [ultra thin] (xj)  -- (xip);
            \draw [ultra thin] (xj)  -- (xipp);
        \end{tikzpicture}
        \caption{The open spherical cap of radius $ \pi/3 $ around projection $ \pi_S (x_i) $ (dashed) contains the projection of $ x_{i'} $. As Lemma~\ref{lem:few_nns} shows, at most $ k $ points among $ \{ x\in \omega_N : y\in \nn_k(x;\omega_N) \} $ can be projected into any given cap of angular radius $ \pi/6 $ (shaded).  }
        \label{fig:nns}
    \end{figure}
    Suppose that an open spherical cap $ B_S(z,\pi/6) $ on $ S $ of angular radius $ \pi/6 $ and center $ z\in S $ contains more than $ k $ elements of this image. Let
    $$ x_i = \arg\max \{ \|x-y\| :x\in \omega_{N,y},\ \pi_S(x)\in  B_S(z,\pi/6) \}. $$ 
    Then $ B_S(z,\pi/6) \subset B_S(\pi_S(x_i),\pi/3) $, implying for any $ x_{i'} \in \omega_{N,y} $, from $ \pi_S(x_{i'}) \in B_S(z,\pi/6) $ it follows $ \angle x_iyx_{i'} < \pi/3 $, so that
    $$ \|x_{i'}-x_i\| < \max\{ \|y-x_i\|, \|y-x_{i'}\|\} = \|y-x_i\|, $$
    since $ x_i $ was chosen the furthest from $ y $. Thus, every other point projected into $ B_S(\pi_S(z),\pi/6) $ is closer to $ x_i $ than $ y $, and it must be $ y \notin \nn_k(x_i; \omega_N) $, a contradiction.
    By this argument, the constant $ n(k,d) $ chosen as
    \[
        n(k,d) : = \max \left\{ n : \exists\, \omega_n \in (\mathbb S^{d-1})^n \text{ such that } \#(\omega_n\cap B_S(z,\pi/6))\leq k,\ \forall z\in \mathbb S^{d-1}\right\}
    \]
    has the properties stated in the claim of the lemma.
\end{proof}
\noindent We will also need a classical result from potential theory, due to Frostman.
\begin{proposition}[Frostman's lemma {\cite[p. 112]{mattila1995geometry}}, \cite{frostman1935potentiel}]
    \label{prop:frostman}
    For any compact $ A \subset \mathbb R^p $ with $ \mathcal H_d(A) > 0 $, there is a finite nontrivial Borel measure $ \mu  $ on $ \mathbb R^{p} $ with support inside $ A $ such that
\[ 
    \mu(B(x,r)) \leq r^d,\qquad {x}\in \mathbb R^{p},\ r>0.
\]
\end{proposition}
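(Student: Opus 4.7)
My plan is to construct $\mu$ as a weak-star limit of measures adapted to a dyadic decomposition of $\mathbb R^p$, with mass on every dyadic cube upper-bounded by its $d$-th power side length and total mass lower-bounded by a covering argument. Fix a large integer $n$; for each dyadic cube $Q$ of side $2^{-n}$ intersecting $A$, place the uniform mass $2^{-nd}$ on $Q$ to obtain a preliminary measure $\nu_n$. Next process the dyadic tower from scale $k = n-1$ down to $k=0$: for each dyadic cube $Q$ of side $2^{-k}$, if $\nu_n(Q) > 2^{-kd}$, multiply the measure restricted to $Q$ by the factor $2^{-kd}/\nu_n(Q) \in (0,1)$, enforcing the bound at scale $k$. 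The resulting measure $\mu_n$ satisfies $\mu_n(Q) \leq 2^{-kd}$ for every dyadic cube of side $2^{-k}$, $0 \leq k \leq n$, and since any ball $B(x,r)$ is contained in $O_{p,d}(1)$ dyadic cubes of side comparable to $r$, this upgrades to the ball bound $\mu_n(B(x,r)) \leq C(p,d)\, r^d$.

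The main obstacle is a uniform positive lower bound on $\mu_n(\mathbb R^p)$. For each $x\in A$ let $k(x)$ be the coarsest scale at which the pruning step was applied to the dyadic cube containing $x$; the ``maximally saturated'' cubes $\{Q_j\}$ obtained in this way form a disjoint family covering $A$, with $\mu_n(Q_j)$ equal to the $d$-th power of the side length of $Q_j$. Since $\{Q_j\}$ covers $A$, the definition of the $d$-dimensional Hausdorff content gives $\sum_j (\diam{Q_j})^d \geq c(p,d,A) > 0$, the positivity following from $\h_d(A) > 0$. Therefore $\mu_n(\mathbb R^p) = \sum_j \mu_n(Q_j) \geq c'(p,d,A) > 0$ uniformly in $n$.

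To finish, weak-star compactness on the bounded closed neighborhood $A_1$ produces a subsequential limit $\mu_{n_l} \weakto \mu$, and since the supports of $\mu_n$ collapse into a $\sqrt{p}\, 2^{-n}$-neighborhood of $A$, one has $\supp \mu \subset A$. The ball bound passes to the limit by lower semicontinuity on open balls, and after rescaling $\mu$ by the constant $1/C(p,d)$ one obtains the required measure with $\mu(B(x,r)) \leq r^d$. The most delicate point in this scheme is the bookkeeping of the pruning step: one must track how successive rescalings at coarser scales affect the total mass, confirming inductively that the maximally saturated cubes form an honest disjoint cover of $A$ at each stage so that the Hausdorff-content comparison survives.
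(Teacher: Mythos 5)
The paper does not prove this statement at all---it is quoted as a classical result with citations to Frostman and to Mattila (p.~112), where precisely the dyadic construction you describe (assign mass $2^{-nd}$ to the level-$n$ dyadic cubes meeting $A$, prune upward scale by scale, bound the total mass below via the $d$-dimensional Hausdorff content of the disjoint family of maximal saturated cubes, then pass to a weak-star limit and rescale) is carried out. Your argument is that standard proof and is correct in outline; the only points needing a word of care are the trivial ones you can absorb into the constant---balls of radius exceeding the coarsest dyadic scale are handled by the compactness of $A$ (finitely many unit cubes meet a fixed neighborhood of $A$), and a point whose cubes are never pruned has its level-$n$ cube as the saturated cube.
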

This statement is used to obtain a lower bound on the optimal covering radius of the compact set $ A $. Indeed, let the measure $ \mu $ be as in Frostman's lemma. Given any collection $ \omega_N  =  (x_i)_1^N \in A^N $, for the radius $ r_0=  \cf(A) N^{-1/d} := (\mu(A)/2)^{1/d} N^{-1/d}  $ and the set
$$ D =A\setminus \bigcup_{i} B({x}_i, r_0) $$
there holds $ \mu (D) \geq \mu(A)/2 $. In particular $ D \neq \emptyset $, so that at least one point of $ A $ is distance $ r_0 $ away from the points in $ \omega_N $. It follows that the covering radius of $ A $ for any collection of $ N $ points $ \omega_N\in A^N $ is at least $ \cf(A) N^{-1/d} $. Observe also that for $ d=p $, it suffices to use $ \mu = v_d^{-1}\mathcal L_d $, and hence $ \mu $ is independent of $ A $ in this case.

\begin{proof}[Proof of Theorem~\ref{thm:separation}]
    Fix an $ N > 2 $. Since $ s > 0 $ and the product $ w(x,y)\|x-y\|^{-s} $ is infinite on the diagonal of $ A\times A $, assumptions of the theorem imply that all entries of $ \omega_N $ are distinct. It will be convenient to assume that  configuration $ \omega_N = ( x_1,\ldots,x_N ) $ is numbered in such a way that minimal separation is attained for the pair $ x_1,x_2 $:
    \[
        \Delta(\omega_N) = \| x_1 - x_2\| = c_N N^{-1/d}.
    \]
    It will also be convenient to assume $ V \geq 0 $ on $ A $; by lower semicontinuity this can be achieved by adding a sufficiently large constant to $ V $, which does not change the behavior of minimizers.

    We need to show $ c_N \geq C(s,k,d,w,V,A,R) > 0 $, $ N\geq 1 $.  
    By definition, a PD-weight satisfies properties (b)-(c) of a CPD-weight; thus there exists a $ \delta > 0 $ such that $ \{ (x,z) : \|x-z\|\leq \delta \} \subset G $ for the neighborhood $ G $ as in the definition of CPD-weight. This implies
    \[
        0 < m_w : =  \inf \{ w(x,z) : \|x-z\|\leq \delta \}. 
    \]
    Let further $ M_w $ be as in \eqref{eq:bounded_w} and $ L = \sup_D V $ -- both finite quantities, due to the boundedness of $ w $ and $ V $ on $ D $.

    Since $ D $ is a set of positive $ \h_d $-measure, and in view of the discussion after  Proposition~\ref{prop:frostman}, there exists a point $ z \in D $, such that
    \[
        \| z - x_i\| \geq \cf N^{-1/d}, \qquad i = 1,\ldots,N,
    \]
    where $ \cf := c(D) =  (\mu(D)/2)^{1/d} $ is from Frostman's lemma for $ D $. Let 
    $$ \omega_N' =  (z,x_2,\ldots,x_N),   $$
    the configuration obtained by replacing $ x_1 $ with $ z $. If $\| x_1 - x_2\| = c_N N^{-1/d} \geq \delta$,
    there is nothing to prove. Otherwise, suppose $ c_N N ^{-1/\d} < \delta $ for some $ N $. 
    Since $ E^k_s(\omega_N; w) $ is close to being optimal, replacing  $ x_1 $ with $ z $ can lower the value of $ E_s^k $ by at most $ RN^{s/d} $:
    \begin{equation*}
        E^k_s(\omega_N'; w,V) - E^k_s(\omega_N; w,V) = E^k_s(\omega_N'; w) - E^k_s(\omega_N; w) + N^{s/d}(V(z)-V(x_1)) \geq -R N^{s/d},
    \end{equation*}
    so that, since $ V(z) - V(x_1) \leq V(z) \leq L $,
    \begin{equation}
        \label{eq:almost_min}
        E^k_s(\omega_N'; w) - E^k_s(\omega_N; w) \geq -(R+L) N^{s/d}.
    \end{equation}
    Let us determine the terms remaining after cancellation in the left-hand side of this equation. Since 
    \[
        E^k_s(\omega_N'; w) - E^k_s(\omega_N; w) = \left(\sum_{(x,y) \in \Lambda_k(\omega_N')} - \sum_{(x,y) \in \Lambda_k(\omega_N)}\right) w(x,y) \|x-y\|^{-s},
    \]
    we can describe all the terms that occur only in $ E_s^k(\omega_N;w) $, and therefore do not cancel out, as
    \[
        \Sigma_1 + \Sigma_2 + \Sigma_3 := 
        \left(
        \sum\limits_{\substack{ (x,y) \in \Lambda_k(\omega_N),\\ x=x_1 }}
        +
        \sum\limits_{\substack{ (x,y) \in \Lambda_k(\omega_N),\\ y=x_1 }}
        +
        \sum\limits_{\substack{ (x,y) \in \Lambda_k(\omega_N)\setminus\Lambda_k(\omega_N')\\ x,y \neq x_1 } }
        \right)
        w(x,y) \|x-y\|^{-s}.
    \]
    Likewise, the terms occurring only in $ E_s^k(\omega_N';w) $ are as follows:
    \[
        \Sigma_4 + \Sigma_5 + \Sigma_6 := 
        \left(
        \sum\limits_{\substack{ (x,y) \in \Lambda_k(\omega_N'),\\ x=z }}
        +
        \sum\limits_{\substack{ (x,y) \in \Lambda_k(\omega_N'),\\ y=z }}
        +
        \sum\limits_{\substack{ (x,y) \in \Lambda_k(\omega_N')\setminus\Lambda_k(\omega_N)\\ x,y \neq z } }
        \right)
        w(x,y) \|x-y\|^{-s}.
    \]
    The term $ \sum_3 $ above arises due to the number of terms originating from each point being fixed at $ k $, so any terms incoming into $ z $ must have had different terminating nodes in $ \omega_N $; similar logic applies to $ \sum_6 $.
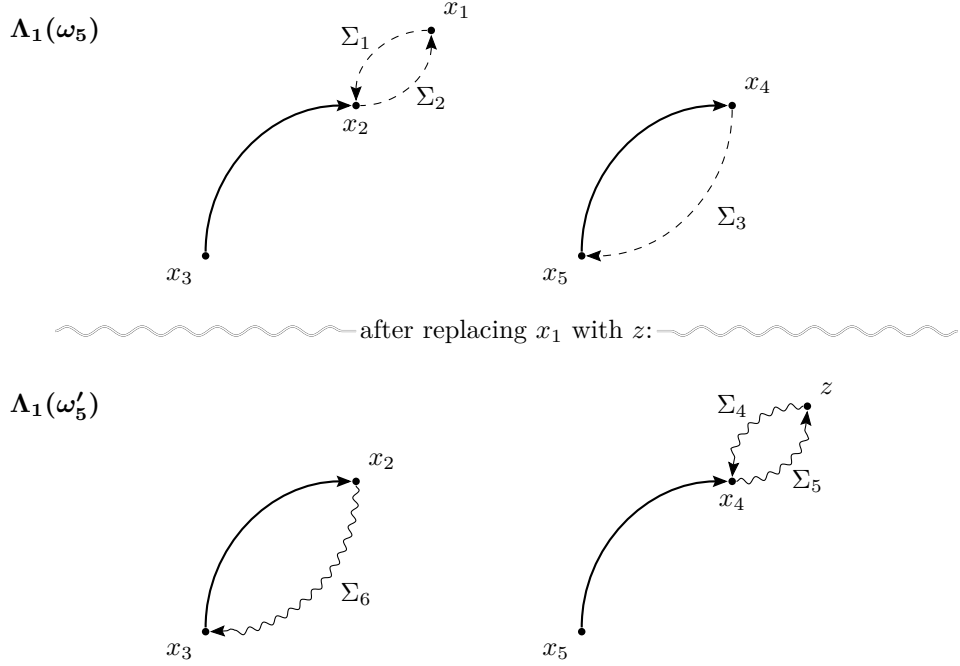
\begin{figure}[t]
\centering
\begin{tikzpicture}
    \def\arr{Stealth[length=2mm,inset=0.3mm]}
    \node[circle , fill , inner sep=1pt , label=above right:$x_1$] (a) at (1 , 3)  {};
    \node[circle , fill , inner sep=1pt , label=below:$x_2$] (b) at (0  , 2)  {};
    \node[circle , fill , inner sep=1pt , label=below left:$x_3$] (c) at (-2 , 0)  {};

    \node[circle , fill , inner sep=1pt , label=above right:$x_4$] (d) at (5  , 2)  {};
    \node[circle , fill , inner sep=1pt , label=below left:$x_5$] (e) at (3  , 0)  {};

    \node at (-4 , 3) {$\bs{\Lambda_1(\omega_5)}$};
    \node at (0 , 2.9) {$ \Sigma_1 $};
    \node at (1 , 2.1) {$ \Sigma_2 $};

    \node at (5   , 0.5) {$ \Sigma_3 $};

    \draw[arrows={\arr-} , black , dashed] (a) to [out=-90 , in=0] (b);
    \draw[arrows={\arr-} , black , dashed] {(b) to [out=90 , in=180] (a)};
    \draw[arrows={-\arr} , black , thick ] (c) to [out=90  , in=180] (b);

    \draw[arrows={-\arr}     , black, thick ] (e) to [out=90 , in=180] (d);
    \draw[arrows={-\arr},  dashed] (d) to[out=-90, in=0] (e);

    \draw [black!80, double, ultra thin, decorate,decoration={snake,amplitude=.6mm,segment length=5mm}] (-4,-1) -- (0,-1);
    \node at (2,-1)  {after replacing $ x_1 $ with $ z $:};
    \draw [black!80, double, ultra thin, decorate,decoration={snake,amplitude=.6mm,segment length=5mm}] (8,-1) -- (4,-1);

    %
    %

    \node[circle , fill , inner sep=1pt , label=above right:$x_2$] (b_lower) at (0 , -3)  {};
    \node[circle , fill , inner sep=1pt , label=below left:$x_3$] (c_lower) at (-2 , -5)  {};
                                                                                     
    \node[circle , fill , inner sep=1pt , label=above right:$z$] (z_lower) at   (6 , -2)  {};
    \node[circle , fill , inner sep=1pt , label=below:$x_4$] (d_lower) at (5       , -3)  {};
    \node[circle , fill , inner sep=1pt , label=below left:$x_5$] (e_lower) at (3  , -5)  {};

    \node at (-4 , -2) {$\bs{\Lambda_1(\omega_5')}$};
    \node at (0   ,-4.5) {$ \Sigma_6 $};

    \node at (5 ,-2) {$ \Sigma_4 $};
    \node at (6 ,-3) {$ \Sigma_5 $};

    \draw[arrows={-\arr} , black , thick ] (c_lower) to [out=90  , in=180] (b_lower);
    \draw[arrows={-\arr}, black      , solid, decorate,decoration={snake,amplitude=.4mm,segment length=2mm,post length=2mm}] (b_lower) to[out=-90, in=0] (c_lower);

    \draw[arrows={-\arr}     , black , solid, decorate,decoration={snake,amplitude=.4mm,segment length=2mm,post length=2mm}] (z_lower) to [out=180 , in=90] (d_lower);
    \draw[arrows={-\arr}     , black , solid, decorate,decoration={snake,amplitude=.4mm,segment length=2mm,post length=2mm}] {(d_lower) to [out=0 , in=-90] (z_lower)};
    \draw[arrows={-\arr}     , black, thick ] (e_lower) to [out=90 , in=180] (d_lower);
\end{tikzpicture}
\caption{Elements of $ \Lambda_1(\omega_5) $ compared to those in $ \Lambda_1(\omega_5') $. Arrow from node $ x $ to node $ y $ means that pair $ (x,y) $ is present in the respective adjacency graph. Solid arrows show pairs present in both graphs; wavy arrows represent pairs appearing only in $ \Lambda_1(\omega_5') $; dashed arrows those appearing only in $ \Lambda_1(\omega_5) $. For this small example, each of $ \Sigma_m $ contains exactly one term/edge.}
\label{fig:sums}
\end{figure}
To summarize, there holds
\[
E^k_s(\omega_N'; w) - E^k_s(\omega_N; w) = \Sigma_4 + \Sigma_5 + \Sigma_6 - \Sigma_1 - \Sigma_2 - \Sigma_3.
\]

As an illustration, all the six sums $ \Sigma_m $ are present when point $ x_1 $ is replaced with $ z $ in the tuple $ \omega_5 = (x_i)_1^5 $, shown in Figure~\ref{fig:sums}. In this figure, ordered pairs $ (x,y)\in \Lambda_1 $ for either tuple are represented as directed edges of a graph. 

To finish the proof, we will need an upper bound on $ \Sigma_6 -\Sigma_2 $. To that end, note that each pair in $ \Sigma_6 $, that is, 
$$ (x,y)\in \Lambda_k(\omega_N')\setminus\Lambda_k(\omega_N) \quad \text{such that } x,y \neq z, $$ 
must be replacing a pair having the form $ (x,x_1)$ in $ \Lambda_k(\omega_N) $, to keep the total number of outgoing edges from $ x $ equal to $ k $. Grouping the new pairs in $ \Sigma_6 $ with the removed ones in $ \Sigma_2 $ by their starting node gives
\[
    \Sigma_6 -\Sigma_2 = \sum_{\substack{x:\ (x,x_1)\in \Lambda_k(\omega_N),\\ z\notin\nn_k(x;\,\omega_N') }} \left( \frac{w(x , (x;\omega_N)_{k+1} ) }{\| x - (x;\omega_N)_{k+1} \|^{s}} - \frac{w(x , x_1) }{\| x - x_1\|^{s}} \right) \leq 0,
\]
since $ \|x-x_1\| \leq  \|x-(x;\omega_N)_{k+1}\| $, and $ w $ is marginally radial, so the expression $ w(x,y)/\|x-y\|^{-s} $ is nonincreasing with the distance $ \|x-y\| $ for every fixed $ x $. We used the notation $ (x,\omega_N)_{k+1} $ for the $ (k+1) $-st nearest neighbor to $ x $ in $ \omega_N $, as introduced in Section~\ref{sec:outline}. Finally, equation \eqref{eq:almost_min} implies
\begin{equation*}
    \begin{aligned}
        -(R+L) N^{s/d}
        &\leq \Sigma_4 + \Sigma_5 + \Sigma_6 - \Sigma_1 - \Sigma_2 - \Sigma_3 \\
        &\leq \Sigma_4 + \Sigma_5 + (\Sigma_6 - \Sigma_2) - \Sigma_1 \\
        &\leq  
        \sum_{x\in\nn_k(z;\omega_N')} \frac{w(z , x) }{\| z - x\|^{s}} +
        \sum_{x:z\in\nn_k(x;\omega_N')} \frac{w(x,z)}{ \| x - z\|^{s}} 
        - \frac{w(x_1 , x_2)}{\| x_1 - x_2\|^{s}}
        \\
        &\leq  \left(k M_w \cf^{-s} + n(k,p) M_w \cf^{-s} -m_w c_N^{-s}\right) N^{s/d},
    \end{aligned}
\end{equation*}
    where in the fourth inequality Lemma~\ref{lem:few_nns} is used to estimate the number of terms in the second sum. 

    This implies that whenever $ c_N < \delta N^{1/d} $, there holds
    \begin{equation}
        \label{eq:replace_point}
        c_N\geq \left(\frac{m_w}{(R+L)\,\cf^s + (k  + n(k,p)) M_w }\right)^{1/s} \cf,
    \end{equation}
    as desired.
\end{proof}
\begin{corollary}
    \label{cor:almost_minimizer_separation}
    Let $ w\equiv 1 $ and $ p=d $; let also $ \omega_N^* $ be such that $ E_s^k(\omega_N^*) \leq \mathcal E_s^k(A,N) + 1 $. Then equation~\eqref{eq:replace_point} implies 
    \[
        \Delta(\omega_N^*) \geq  \left(\frac{1}{1 + k  + n(k,d) }\right)^{1/s} \cf\, N^{-1/d}, \qquad N \geq N_0(d, \mathcal L_d(A)),
    \]
    where $ \cf = c(d) \mathcal L_d(A)^{1/d} $, as in the discussion after Proposition~\ref{prop:frostman}.
\end{corollary}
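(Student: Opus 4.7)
The corollary is essentially a direct substitution into inequality (\ref{eq:replace_point}) from the proof of Theorem~\ref{thm:separation}, so the plan is to carefully track what each quantity becomes under the hypotheses $w\equiv 1$, $p=d$, $V\equiv 0$, and excess energy at most $1$, then verify that the condition $c_N < \delta N^{1/d}$ is not an obstruction.

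First, I would specialize the constants. Since $w\equiv 1$, the quantities $m_w = \inf \{w(x,z): \|x-z\|\leq \delta\}$ and $M_w$ from (\ref{eq:bounded_w}) both equal $1$, and one may take the CPD-neighborhood parameter $\delta=+\infty$, so the dichotomy $c_N N^{-1/d} \geq \delta$ vs.\ $c_N N^{-1/d} < \delta$ in the theorem's proof collapses to the single case handled by (\ref{eq:replace_point}). Similarly $V\equiv 0$ gives $L = \sup_D V = 0$. The set $D$ may be taken to be all of $A$, and since $p=d$, Proposition~\ref{prop:frostman} is invoked with $\mu = v_d^{-1}\mathcal L_d$, yielding $\cf = c(d)\mathcal L_d(A)^{1/d}$ with $c(d) = (2v_d)^{-1/d}$ independent of $A$, as noted after Proposition~\ref{prop:frostman}.

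Next, I would translate the hypothesis $E_s^k(\omega_N^*) \leq \mathcal E_s^k(A,N) + 1$ into the parameter $R$ of Theorem~\ref{thm:separation}. Writing $1 = R\, N^{s/d}$ gives $R = N^{-s/d}$, so that $(R+L)\cf^s = \cf^s/N^{s/d}$. Plugging these values into (\ref{eq:replace_point}) yields
\[
    c_N \geq \left(\frac{1}{(k + n(k,d)) + \cf^s/N^{s/d}}\right)^{1/s} \cf.
\]
The final step is to choose $N_0 = N_0(d,\mathcal L_d(A))$ large enough that $\cf^s/N^{s/d} \leq 1$ for all $N \geq N_0$; since $\cf^s/N^{s/d} = (\cf/N^{1/d})^s$, the condition $\cf \leq N^{1/d}$, i.e., $N \geq \cf^d = c(d)^d \mathcal L_d(A)$, suffices. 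Under this constraint the denominator is bounded above by $1 + k + n(k,d)$, giving the stated separation.

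There is no real obstacle here: every ingredient is already assembled in the proof of Theorem~\ref{thm:separation}. The only point requiring a moment of care is the dependence of $N_0$ on $A$ through $\mathcal L_d(A)$ (via $\cf$), which is precisely what the statement advertises; the shape constants $k$, $d$, $n(k,d)$, $s$ appear only in the prefactor, exactly matching the form claimed.
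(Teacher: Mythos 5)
Your proposal is correct and follows essentially the same route as the paper, which simply notes that under these hypotheses $L=0$, $R\leq N^{-s/d}$, and $m_w=M_w=1$ in \eqref{eq:replace_point}; your additional remarks (the collapse of the $c_N N^{-1/d}\geq\delta$ case for $w\equiv 1$, and the choice of $N_0$ so that $\cf^s/N^{s/d}\leq 1$) just make explicit what the paper leaves implicit.
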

\begin{proof}
    It suffices to note that under such assumptions on $ \omega_N^* $, $ L =0 $, $ R\leq N^{-s/d} $, and $ m_w = M_w $ = 1 in equation~\eqref{eq:replace_point}.
\end{proof}
\begin{corollary}
    \label{cor:covering}
    The proof of Theorem~\ref{thm:separation} shows that there holds an optimal covering result, at least for some sublevel set of $ V+w $. In particular, when $ V \equiv 0 $, $ w\equiv 1 $ one has an optimal covering result:
    For a compact set $ A \subset \mathbb R^d $ with $ 0 < \mathcal \h_d(A) < \infty $, and a sequence of configurations $ \{ \omega_N^* \}_1^\infty $, such that 
    \[
        E_s^k(\omega_N) \leq \mathcal E_s^k(A, N) + RN^{s/d}, \qquad N\geq 1,
    \]
    for every $ y \in A $ there holds
    \[
        \dist(y,\omega_N) \leq C(s,k,d,A,R)\, N^{-1/d}.
    \]
\end{corollary}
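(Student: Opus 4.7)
The plan is to adapt the point-replacement argument of Theorem~\ref{thm:separation}, reversing the roles of the inserted and removed vertices. Given $ y \in A $ with $ \dist(y, \omega_N) =: D\, N^{-1/d} $, the goal is to bound $ D $ uniformly. The first ingredient is a lower bound $ \mathcal E_s^k(A, N) \geq c_A N^{1+s/d} $ for $ N $ large, where $ c_A > 0 $ depends only on $ A, s, k, d $; this follows either from Theorem~\ref{thm:k_asympt} after a regularization of $ A $, or more directly by applying Frostman's lemma to extract a packing bound and then lower-bounding $ E_s^k $ via the induced nearest-neighbor distances. The pigeonhole principle applied to $ E_s^k(\omega_N) = \sum_{x\in\omega_N} \Sigma_1^x $, with $ \Sigma_1^x := \sum_{z\in \nn_k(x;\omega_N)} \|x-z\|^{-s} $, then produces some $ x_j \in \omega_N $ with $ \Sigma_1^{x_j} \geq c_A N^{s/d} $.

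Next, forming $ \omega_N' \in A^N $ by swapping $ x_j $ with $ y $, the decomposition of $ E_s^k(\omega_N') - E_s^k(\omega_N) $ into $ \Sigma_4 + \Sigma_5 + \Sigma_6 - \Sigma_1 - \Sigma_2 - \Sigma_3 $ from the proof of Theorem~\ref{thm:separation} carries over verbatim: with $ w \equiv 1 $ the monotonicity inequality $ \Sigma_6 - \Sigma_2 \leq 0 $ is immediate, $ \Sigma_3 \geq 0 $, and by Lemma~\ref{lem:few_nns} together with $ \dist(y, \omega_N') \geq D\, N^{-1/d} $ we have $ \Sigma_4 + \Sigma_5 \leq (k + n(k,d))\, D^{-s} N^{s/d} $. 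Near-optimality $ E_s^k(\omega_N') - E_s^k(\omega_N) \geq -R N^{s/d} $ then forces $ c_A \leq R + (k+n(k,d))\, D^{-s} $, yielding $ D \leq ((k+n(k,d))/(c_A - R))^{1/s} $ whenever $ R < c_A $.

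The case $ R \geq c_A $ is the main obstacle, since a single swap is no longer sufficient. The plan is to iterate: successively swap $ m $ crowded vertices for $ m $ new points placed inside $ A \cap B(y, \tfrac12 D N^{-1/d}) $ with pairwise separation at least $ \delta N^{-1/d} $, where $ \delta $ is a fixed constant chosen so that $ (k+n(k,d))\delta^{-s} \leq c_A/2 $. Every intermediate configuration lies in $ A^N $ and so has energy at least $ \mathcal E_s^k(A,N) $, which keeps the pigeonhole estimate $ \Sigma_1 \geq c_A N^{s/d} $ alive at each step; summing the single-swap inequality over all $ m $ iterations then gives $ m c_A / 2 \leq R $, i.e., $ m \leq 2R/c_A $. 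A packing argument supplies at least $ c_d (D/\delta)^d $ legal positions for the new points, so combining with the previous bound yields $ c_d (D/\delta)^d \leq 2R/c_A $, and hence the desired $ D \leq \delta (2R/(c_A c_d))^{1/d} =: C(s,k,d,A,R) $. The principal technical issue in this step is ensuring that $ A \cap B(y, \tfrac12 DN^{-1/d}) $ has enough $ \h_d $-content to accommodate the required $ m $ points; for $ y $ close to the boundary of $ A $, this is handled by first passing to a slightly smaller Jordan-measurable subset where Theorem~\ref{thm:k_asympt} applies, and absorbing the small defect into the constants.
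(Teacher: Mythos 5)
Your single-swap argument (the case $R<c_A$) is correct and is essentially the paper's own proof of this corollary: the paper reuses the replacement inequality behind \eqref{eq:replace_point}, now with the hole point $y$ playing the role of the Frostman point $z$, and closes the estimate with a volume-argument upper bound $c_N\le C(A)$ on the scaled minimal separation, whereas you remove a point of maximal point energy and use the lower bound $\mathcal E_s^k(A,N)\ge c_A N^{1+s/d}$; the two variants are interchangeable. Two cautions on your first ingredient: that lower bound should come from the elementary disjoint-balls-plus-Jensen argument (as in Lemma~\ref{thm:cube} or Lemma~\ref{lem:lower_bound_nonsmooth}), not from Frostman's lemma, which controls covering rather than packing, and not from Theorem~\ref{thm:k_asympt}, since that theorem depends on the present corollary through Lemma~\ref{thm:cube} and invoking it would be circular. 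Note also that, exactly as your computation shows, the single-swap route yields a finite constant only when $R$ is below a threshold determined by $A$; the paper's two-line proof stops at this point and is silent about larger $R$.

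The multi-swap iteration for $R\ge c_A$ is where you go beyond the paper, and its bookkeeping is sound: each inserted point keeps point energy at most $k\delta^{-s}N^{s/d}<c_AN^{s/d}$, so the maximal-energy point removed at every step is an original point, every swap lowers the energy by at least $\tfrac{c_A}{2}N^{s/d}$, and hence at most $2R/c_A$ swaps are possible. The genuine gap is the packing step: to place on the order of $(D/\delta)^d$ admissible points you need $A\cap B\bigl(y,\tfrac12 DN^{-1/d}\bigr)$ to carry $\h_d$-measure comparable to $(DN^{-1/d})^d$, i.e.\ a lower density bound for $A$ at $y$ at that scale. A general compact $A\subset\mathbb R^d$ with $0<\h_d(A)<\infty$ has no such bound, and your remedy (``pass to a slightly smaller Jordan measurable subset and absorb the defect'') does not address it, because the obstruction is the geometry of $A$ at the point $y$ itself, not regularity elsewhere. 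Moreover, no remedy can exist at this level of generality: take $A=[0,1]^d\cup\{y_0\}$ with $\dist\bigl(y_0,[0,1]^d\bigr)=10$ and let $\omega_N$ be near-minimizers on the cube alone; since inserting one extra point into an $(N-1)$-point cube configuration at a point of maximal distance from it raises the energy by only $O(N^{s/d})$, one checks that $E_s^k(\omega_N)\le\mathcal E_s^k(A,N)+RN^{s/d}$ for a fixed moderate $R$, while $\dist(y_0,\omega_N)\ge 9$ for every $N$. So for $R$ above the single-swap threshold the covering statement needs an additional lower-regularity hypothesis on $A$; your iteration does prove it, for all $R$, for such sets (in particular for the cube, the only case the paper uses later, in the proof of Lemma~\ref{thm:cube}), and that hypothesis should be stated explicitly rather than absorbed into the constants.
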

\begin{proof}
    It suffices to note that in the proof of the theorem, one obtains the inequality~\eqref{eq:replace_point} between the covering radius at $ z \in D $, equal to $ \cf N^{-1/d} $, and the minimal separation, equal to  $ c_N N^{-1/d} $. Using $ \h_d(A) < \infty $ and a standard volume argument, one easily obtains an upper bound of $ C(A)N^{-1/d} $ on the optimal separation, at least for $ p=d $. In the case $ p>d $, one uses instead the finiteness of the Minkowski content, $ \m_d(A) < \infty $, to the same effect. In the sequel, we shall only need the optimal covering property for the unit cube in $ \mathbb R^d $ in Section~\ref{sec:cube}. It is not hard to see that the optimal covering holds on the $ L_1 $-sublevel set for $ V $, with $ L_1 $ from the statement of Theorem~\ref{thm:k_asympt}, and is not guaranteed on any larger sublevel set.
\end{proof}

\medskip 

\noindent{\bf Existence of minimizers of $ E^k_s $} The above theorem concerns configurations with near-optimal value of energy. A natural question to ask is, under which assumptions on $ w $ the functional \eqref{eq:k_energy} attains its minimum on $ A^N $; that is, whether $ E^k_s $ is lower semicontinuous. Recall that the topology on $ A^N $ is the product topology induced by the restriction of Euclidean metric to $ A $. In the following proof it will be convenient to use $ l^\infty $ norm on $ A^N $, so that distance between two configurations is
\[
    \rho(\omega_N', \omega_N'') = \max_i \|x_i' - x_i''\|.
\]
\begin{lemma}
    \label{lem:lower_semicont}
    Let $ V $ be lower semicontinuous on $ A $. If $ w $ is a weight  of the form
    \begin{equation*}
        w(x,y) = W(x, \|x-y\|),
    \end{equation*}
    with $ W $ lower semicontinuous on $A\times [0,\diam A] $, then $ E^k_s(\omega_N; w, V) $ is lower semicontinuous on $ A^N $ for any fixed $ N \geq 1 $.
\end{lemma}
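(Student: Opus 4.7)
My plan is to split the energy as $E^k_s(\omega_N; w, V) = \sum_{(x,y)\in \Lambda_k(\omega_N)} w(x,y)\|x-y\|^{-s} + N^{s/d}\sum_{x\in\omega_N} V(x)$ and verify lower semicontinuity of the two pieces separately. The external field sum is a finite sum of lower semicontinuous functions of individual entries $x_i$, hence is itself lower semicontinuous on $A^N$, so the entire effort reduces to the interaction term. The main obstacle for that term is combinatorial: the adjacency set $\Lambda_k(\omega_N)$ can change discontinuously with $\omega_N$ whenever there are ties among distances to $x_i$, and in particular the map $\omega_N \mapsto \nn_k(x_i;\omega_N)$ is not continuous, so one cannot naively pass to a pointwise limit of the individual summands.

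To bypass this, I would fix an arbitrary sequence $\omega_N^{(n)} \to \omega_N$ achieving $\liminf_n E_s^k(\omega_N^{(n)}; w, V)$ and exploit the finiteness of the set of possible nearest-neighbor assignments. By a diagonal extraction over $i = 1, \ldots, N$, one can pass to a subsequence along which the $k$-tuples of indices selected by $\nn_k(x_i^{(n)}; \omega_N^{(n)})$ stabilize, producing tuples $J_i = (j_{i,1}, \ldots, j_{i,k})$ independent of $n$ with $\nn_k(x_i^{(n)};\omega_N^{(n)}) = (x_{j_{i,l}}^{(n)})_{l=1}^k$. The defining inequalities $\|x_i^{(n)} - x_{j_{i,l}}^{(n)}\| \leq \|x_i^{(n)} - x_j^{(n)}\|$ for $j \notin J_i$ survive passage to the limit by continuity of the norm, so $(x_{j_{i,l}})_{l=1}^k$ is a valid tuple of $k$ nearest neighbors of $x_i$ in $\omega_N$, differing from the canonical $\nn_k(x_i;\omega_N)$ at worst in how ties are broken. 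Here the hypothesis $w(x,y) = W(x, \|x-y\|)$ is crucial: the summand depends on $y$ only through the distance to $x_i$, so any two valid nearest-neighbor tuples contribute the same value, and the stabilized sum agrees with $\sum_{y\in\nn_k(x_i;\omega_N)} w(x_i,y) \|x_i - y\|^{-s}$. I expect this tie-breaking step to be the only delicate point in the argument.

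To conclude, it suffices to observe that $(x,r) \mapsto W(x,r)\,r^{-s}$ is lower semicontinuous on $A \times [0, \diam A]$ when the usual conventions $r^{-s}|_{r=0} = +\infty$ and $0 \cdot (+\infty) = 0$ are adopted; this is routine as a product of two nonnegative lower semicontinuous functions, with a short case analysis handling the corners $r=0$ and $W=0$. Composing with the continuous map $(u,v) \mapsto (u, \|u-v\|)$ and using lower semicontinuity of finite sums of nonnegative quantities, one obtains
\[
    \liminf_{n\to\infty} \sum_{i=1}^N \sum_{l=1}^k W(x_i^{(n)}, \|x_i^{(n)} - x_{j_{i,l}}^{(n)}\|) \,\|x_i^{(n)} - x_{j_{i,l}}^{(n)}\|^{-s} \;\geq\; \sum_{(x,y)\in\Lambda_k(\omega_N)} w(x,y)\,\|x-y\|^{-s},
\]
which combined with the external field bound yields the lemma.
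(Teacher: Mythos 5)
Your proof is correct, and it rests on the same key insight as the paper's argument --- that only the multiset of distances to the $k$ nearest neighbors matters, because $w(x,y)=W(x,\|x-y\|)$ makes the tie-breaking convention invisible to the energy --- but the mechanics are genuinely different. The paper works directly at the limit configuration $\omega_N^\circ$ with an $\epsilon$-$\delta$ argument: it treats the case of repeated entries separately, then groups the entries of $\omega_N^\circ$ by their distinct distances $D_1<\dots<D_K$ to $x_i^\circ$, shows that for configurations within $\delta<\min(D_{l+1}-D_l)/4$ the group cardinalities are preserved, and transfers the nearest-neighbor distances through a bijection of groups, paying $\epsilon/k$ per term via lower semicontinuity of $W$. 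You instead take an arbitrary sequence realizing the $\liminf$, use finiteness of the set of possible adjacency assignments to pass to a subsequence along which the index tuples $J_i$ are constant, send the defining inequalities $\|x_i^{(n)}-x_j^{(n)}\|\le\|x_i^{(n)}-x_{j'}^{(n)}\|$ (for $j\in J_i$, $j'\notin J_i$) to the limit, and observe that any $k$-subset whose distances are all dominated by the complementary distances realizes exactly the $k$ smallest distances, hence the same sum as the canonical $\nn_k(x_i;\omega_N)$; then termwise lower semicontinuity of $(x,r)\mapsto W(x,r)r^{-s}$ finishes the argument. What your route buys: no explicit $\delta$ bookkeeping or group-bijection construction, and a uniform treatment of ties and of coincident entries (the corner conventions $r^{-s}|_{r=0}=+\infty$, $0\cdot\infty=0$ absorb the degenerate case that the paper dispatches separately, and your argument never needs the energy to be $+\infty$ there). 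What the paper's route buys is quantitative information --- an explicit neighborhood of $\omega_N^\circ$ on which the nearest-neighbor distance structure is stable, which is the kind of statement one can reuse --- whereas your subsequence extraction is purely qualitative. Two small points worth writing out if you polish this: justify that the lexicographic (distance, index) tie-breaking still yields the weak inequalities you pass to the limit (it does, since the index only enters at equal distances), and handle the bookkeeping of first-occurrence removal in $\omega_N\setminus\{x_i\}$ when entries coincide, which does not affect the distance multiset but should be mentioned.
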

\begin{remark}
    To see that $ w $ must indeed only depend on the distance $ \|x-y\| $, let $ d=2 $ and 
    $$ \omega_3^{(n)} = (x_1, x_2^{(n)}, x_3) =  \left( (0,0),\ \left(0,1+2^{-n}\right),\ (1,0) \right) $$
    be a sequence of 3-point configurations, converging to $ \omega_3 = ( x_i )_1^3 := \left( (0,0),\  (0, 1),\ (1,0)\right) $. Let further $ w $ be continuous, symmetric, and such that 
    $$ w((0,0), \ (0,1)) =3, \qquad w((0,0), \ (1,0)) =1. $$
    Then 
    \[
        E_1^1(\omega_3^{(n)}) = 3\|x_2^{(n)}\|^{-1} + 2\|x_3\|^{-1} \to 5, \quad n \to \infty,
    \]
    while 
    \[
        E_1^1(\omega_3) = 2\cdot3\|x_2\|^{-1} + \|x_3\|^{-1} =  7,
    \]
    since the tie-breaking convention in $ E_1^1 $ prefers points with smaller indices (see page \pageref{pg:ties}), thereby violating the lower semicontinuity.
\end{remark}
\begin{proof}[Proof of Lemma~\ref{lem:lower_semicont}]
    Fix a configuration $ \omega_N^\circ = \{ x_1^\circ,\ldots,x_N^\circ \} $. In this proof, points and indices related to $ \omega_N^\circ $ will be denoted by the $ ^\circ $ superscript. Objects related to a variable configuration $ \omega_N $, approaching $ \omega_N^\circ $ in the product topology, will not carry this superscript.

    If $ x_i^\circ = x_j^\circ $ for some $ i\neq j $,  $ E^k_s(\omega_N^\circ; w,V) = +\infty $. Due to the lower semicontinuity and nonnegativity of $ \|\cdot \|^{-s} $ and $ w $, and lower semicontinuity of $ V $, there holds
    \[ 
        E^k_s(\omega_N; w,V) \to +\infty, \qquad \hbox{ whenever }  \omega_N \to \omega_N^\circ \text{ in }  A^N.
    \]

    Let now $ \omega_N^\circ $ consist of distinct points and $ V\equiv 0 $. Fix an $ \epsilon > 0 $.
    Note that when $ \omega_N = \{ x_i \}_1^N $ is sufficiently close to $ \omega_N^\circ $ in the $ l^\infty $ metric on $ A^N $, so that for $\Delta $ as in \eqref{eq:separation},
    \[
        \| x_i - x_i^\circ \| < \Delta(\omega_N^\circ) /3, \qquad 1 \leq i \leq N,
    \]
    then the value of $ E_s^k(\omega_N) $ continuously depends on $ x_i $. In addition, by lower semicontinuity of $ w $, for a sufficiently small $ \delta_1 = \delta(\epsilon) $, one has
    \[
        w(x_i,x_j)\| x_i - x_j \|^{-s} \geq w(x_i^\circ,x_j^\circ)\| x_i^\circ - x_j^\circ \|^{-s} - \frac{\epsilon}{k}
    \]
    whenever $ r_{ij} := \| x_i - x_j \|  $ and $ r_{ij}^\circ := \| x_i^\circ - x_j^\circ \|  $ differ by at most $ \delta_1 $ for $ 1\leq i,j \leq N $ (we used here the specific form of $ w $).

    We shall further need to show that the nearest neighbor structure $ \Lambda_k(\omega_N) $ does not change much in a neighborhood of $ \omega_N^\circ $ -- or more precisely, distances to the $ k $ nearest neighbors remain approximately the same, even if the points themselves may be different.
    Fix an index $ i $. To obtain lower semicontinuity of $ E_s^k(\omega_N;w) $, it suffices to verify the semicontinuity at $ \omega_N^\circ $ only for the sum
    \[
        \sum_{x_j\in \nn_k(x_i;\omega_N)} \|x_i - x_j\| ^{-s}
    \]
    as a function of configuration $ \omega_N $.
    Consider distances from $ x_i^\circ $ to the other entries of $ \omega_N^\circ $:
    \[
        d_l := \| x_i^\circ - (x_i^\circ ; \omega_N^\circ)_l\|, \qquad  1\leq l \leq N.
    \]
    By the definition of $ (x_i^\circ; \omega_N^\circ)_l $, $ d_{l+1}\geq d_l $.
    Let $ \{ D_l \}_1^K $ be the strictly increasing sequence of unique values among $ \{ d_l \} $, $K \leq N$. Partition the multiset of entries of $ \omega_N^\circ $ as
    \[
        \bigsqcup_{l=1}^{K} J^\circ_{l},\qquad J_{l}^\circ:= \{ y\in \omega_N^\circ : \|x_i^\circ-y\| = D_l \},
    \]
    according to the unique distances to $ x_i^\circ $. Note that when a configuration $ \omega_N = (x_1,\ldots,x_N) \in A^N $ is such that $ \| x_j - x_j^\circ \| < \delta $, $ 1\leq j \leq N $, with
    \[
        \delta < \min \left[\delta_1,\ \min\left\{ (D_{l+1}-D_l)/4 \right\}_1^{K-1} \right],
    \] 
    there holds
    \[
        \| x_i - x_{j_1}\| < \| x_i - x_{j_2}\|, \qquad x_{j_1}^\circ \in J_{l_1}^\circ,\ x_{j_2}^\circ \in J_{l_2}^\circ, \ l_1 < l_2.
    \]
    That is, the entries of $ \omega_N\setminus\{ x_i \} $ can be collected into $ K $ groups, $ \{ J_l \}_1^K $, with any element of a group $ J_{l_1} $ closer to $ x_i $ than any element of another group $ J_{l_2} $ when $ l_1 < l_2 $. By construction,
    \[
        \# J_l^\circ = \# J_l,\qquad 1\leq l \leq K,
    \]
    so there is a bijection between elements of corresponding groups.
    Observe also that for every pair of elements  $ y,z \in J_{l} $ and a pair $ y^\circ, z^\circ \in J_l^\circ $, belonging to corresponding groups, there holds
    \[
        \big|\, |\| y- z \| - \| y^\circ - z^\circ \|\,\big| < 2\delta,
    \]
    which by the definition of $ \delta_1 $ gives
    \begin{equation*}
        w(y,z)\| y - z \|^{-s} \geq w(y^\circ,z^\circ)\| y^\circ - z^\circ \|^{-s} - \frac{\epsilon}{k}.
    \end{equation*}
    By the aforementioned bijection between groups $ J_l^\circ $ and $ J_l $, this implies
    \[
    \sum_{x_j\in \nn_k(x_i;\omega_N)} w(x_i,x_j)\| x_i - x_j \|^{-s} \geq \sum_{x_j\in \nn_k(x_i^\circ;\omega_N^\circ)} w(x_i^\circ,x_j^\circ)\| x_i^\circ - x_j^\circ \|^{-s} - {\epsilon}, \]
    proving the lower semicontinuity of the point energy for a single $ x_i $, and therefore lower semicontinuity of $ E_s^k(\cdot\,; w) = E_s^k(\cdot\,; w, 0) $. Addition of an external field is only introducing another lower semicontinuous term, which completes the proof.
\end{proof}
\begin{corollary}
    Under the assumptions of Lemma~\ref{lem:lower_semicont}, minimizing the $ k $-energy for any $ k \geq 1 $ and $ s>0 $ yields a separated configuration.
\end{corollary}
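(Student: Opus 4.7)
The plan is simply to combine Lemma~\ref{lem:lower_semicont} with compactness of $A^N$. First, I would observe that the configuration space $A^N$ inherits the product topology from $(\mathbb R^p)^N$ and is compact because $A$ is compact. By Lemma~\ref{lem:lower_semicont}, the functional $E^k_s(\cdot\,;w,V)$ is lower semicontinuous on this compact space. A standard variational argument (a lower semicontinuous function on a compact set attains its infimum) then yields existence of a minimizer $\omega_N^*\in A^N$.

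Next I would argue that any minimizer must consist of $N$ distinct points. As established in the first paragraph of the proof of Lemma~\ref{lem:lower_semicont}, if $x_i^\circ=x_j^\circ$ for some $i\neq j$ then the adjacency graph $\Lambda_k(\omega_N^\circ)$ contains the pair $(x_i^\circ,x_j^\circ)$, and since $s>0$ the corresponding term $w(x_i^\circ,x_j^\circ)\|x_i^\circ-x_j^\circ\|^{-s}$ makes $E^k_s(\omega_N^\circ;w,V)=+\infty$. Since the infimum is attained and is finite (provided there exists at least one configuration with finite energy, which holds whenever $A$ has at least $N$ distinct points, and this is automatic from $\mathcal H_d(A)>0$ in the surrounding assumptions of the paper), the minimizer cannot have coincident entries. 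Therefore $\Delta(\omega_N^*)>0$, i.e., the minimizer is a separated configuration in the sense of \eqref{eq:separation}.

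The argument is essentially bookkeeping: the real content is Lemma~\ref{lem:lower_semicont} itself. The only point requiring mild care is noting that ``separated'' here means strictly positive pairwise distance, which is weaker than the $N^{-1/d}$ order guaranteed by Theorem~\ref{thm:separation} (the latter needs additional structural hypotheses on $w$ and $V$ such as marginal radiality and boundedness on a set of positive $\mathcal H_d$-measure, which are not assumed in Lemma~\ref{lem:lower_semicont}). Thus no quantitative separation bound is claimed here; the corollary is qualitative, asserting only $\Delta(\omega_N^*)>0$.
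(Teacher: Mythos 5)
Your argument is correct and is exactly the (implicit) proof the paper intends: lower semicontinuity from Lemma~\ref{lem:lower_semicont} plus compactness of $A^N$ gives a minimizer, and since coincident entries force $E^k_s=+\infty$ while configurations of distinct points have finite energy, the minimizer must have $\Delta(\omega_N^*)>0$. Your remark that only qualitative separation is claimed here (the $N^{-1/d}$ rate requiring the extra hypotheses of Theorem~\ref{thm:separation}) matches the paper's reading.
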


\section{Proofs of the main results}
\label{sec:proofs}
\subsection{Asymptotics on cubes}
\label{sec:cube} 
Let us demonstrate the existence of asymptotics on cubes for the Riesz $ k $-energy functionals, following the outline in Section~\ref{sec:outline}. We shall need this fact only for the unweighted $ E_s^k $ -- or, equivalently, for the constant weight $ w $ and zero external field $ V $. In this section, the ambient space is $ \mathbb R^d $.
\begin{lemma}
    \label{thm:cube}
  For $ s> 0 $ and  $ k ,d$ positive integers, the following limit exists, and is positive and finite:
 \begin{equation}\label{CsdkDef}
     C_{s,d}^k:=   \lim_{N\to\infty}  \,\frac{\mathcal E_s^k(\q_d,N)}{N^{1+s/d}},
\end{equation}
 which is the constant appearing in \eqref{CsdkDef0}.
\end{lemma}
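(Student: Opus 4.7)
The plan is to verify three properties of $h(N):=\mathcal E_s^k(\q_d,N)/N^{1+s/d}$: (a) $\limsup h(N)<\infty$, (b) $\liminf h(N)>0$, and (c) $\limsup h(N)=\liminf h(N)$, the last via a subadditivity argument with buffer zones. For (a), I would use a uniform grid of spacing $1/m$ for $N=m^d$: each point's $k$ nearest neighbors lie within distance $O(k^{1/d}/m)$, yielding $E_s^k=O(k\,N^{1+s/d})$. The monotonicity $\mathcal E_s^k(\q_d,n)\le\mathcal E_s^k(\q_d,n+1)$---deleting a point only enlarges every $k$-th nearest-neighbor distance and therefore shrinks every $\|x-y\|^{-s}$ term---extends this bound to all $N$.

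For (b), let $r_i$ be the nearest-neighbor distance of $x_i\in\omega_N\subset\q_d$. The balls $B(x_i,r_i/2)$ are pairwise disjoint (an overlap would place one center strictly inside the other's nearest-neighbor ball), so a volume comparison with a cube containing them all gives $\sum r_i^d\le C(d)$. H\"older's inequality with conjugate exponents $p=(s+d)/s$ and $q=(s+d)/d$ applied to the identity $1=r_i^{d/p}\cdot r_i^{-s/q}$ yields
\[
N\le\Bigl(\sum r_i^d\Bigr)^{s/(s+d)}\Bigl(\sum r_i^{-s}\Bigr)^{d/(s+d)},
\]
whence $\sum r_i^{-s}\ge c(d,s)\,N^{1+s/d}$. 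Since $E_s^k(\omega_N)\ge\sum_i r_i^{-s}$ (retain only the closest term in each inner sum), $\mathcal E_s^k(\q_d,N)\ge c\,N^{1+s/d}$.

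For (c), the central step, I would construct near-optimal configurations of $m^d n$ points via subcube tiling with buffer zones. By Lemma~\ref{lem:lower_semicont}, minimizers $\omega_n^*$ of $\mathcal E_s^k(\q_d,n)$ exist; by Corollaries~\ref{cor:almost_minimizer_separation} and~\ref{cor:covering} they satisfy separation $\ge c_1 n^{-1/d}$ and covering radius $\le c_2 n^{-1/d}$ with constants independent of $n$. A Voronoi-cell volume count (cells have diameter $\le 2c_2 n^{-1/d}$, so any ball of radius $R$ around $x\in\omega_n^*$ intersects $\gtrsim R^d n$ cells) then bounds the maximum $k$-th nearest-neighbor distance in $\omega_n^*$ by $D_k(n)\le C_0\,n^{-1/d}$. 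Partition $\q_d$ into $m^d$ congruent subcubes of side $1/m$; within each, carve a concentric inner subcube of side $(1-\eta_n)/m$ with $\eta_n:=2C_0 n^{-1/d}$, so adjacent inner subcubes are at distance $\ge\eta_n/m$. Place an affine-scaled copy of $\omega_n^*$ in each inner subcube. The scaled $k$-th nearest-neighbor distance within any subcube is at most $D_k(n)(1-\eta_n)/m<\eta_n/m$, so every point's $k$ nearest neighbors remain in its own subcube and the energy decouples:
\[
E_s^k(\widetilde\omega_{m^d n})=m^d\Bigl(\frac{m}{1-\eta_n}\Bigr)^{\!s} E_s^k(\omega_n^*),
\qquad\text{hence}\qquad h(m^d n)\le(1-\eta_n)^{-s}\,h(n).
\]
For $N$ not of this form, choose $m$ with $m^d n\le N<(m+1)^d n$ and combine monotonicity with $((m+1)/m)^{d+s}\to 1$ to conclude $\limsup_N h(N)\le(1-\eta_n)^{-s}\,h(n)$ for each large fixed $n$. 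Letting $n\to\infty$ along a subsequence with $h(n)\to\liminf h$ (with $\eta_n\to 0$) yields $\limsup h\le\liminf h$, establishing existence of the limit; positivity and finiteness come from (a) and (b).

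The main obstacle is the uniform pointwise bound $D_k(n)\le C_0\,n^{-1/d}$ in (c): it forces simultaneous use of the separation (Corollary~\ref{cor:almost_minimizer_separation}) and covering (Corollary~\ref{cor:covering}) estimates for minimizers---only together do they give a quantitative buffer width $\eta_n\to 0$ independent of the ambient $m$, which is what allows the subadditivity chain to close.
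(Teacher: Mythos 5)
Your proposal is correct and follows essentially the same route as the paper's proof: a uniform $O(n^{-1/d})$ bound on $k$-th nearest-neighbor distances of near-minimizers, tiling the cube with slightly shrunk rescaled copies of a near-optimal $n$-point configuration so that the energy decouples, squeezing $N$ between $m^dn$ and $(m+1)^dn$ via monotonicity to get $\limsup\le\liminf$, finiteness from a lattice configuration, and positivity from disjoint half-nearest-neighbor balls plus a convexity inequality. The only cosmetic differences are that the paper gets the $k$-th neighbor bound directly from Corollary~\ref{cor:covering} (the covering estimate alone suffices there, so your insistence that separation is also essential is a slight overstatement) and uses Jensen rather than H\"older in the lower bound.
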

\begin{proof}
  Set
    \[
        \underline \l := \liminf_{N\to\infty} \frac{\mathcal E_s^k(\q_d, N)}{N^{1+s/d}}.
    \] 
    Fix $\epsilon>0$ and let  $ \{ \underline \omega_n \}_{n\in \uline{\mathscr N}} $ be a subsequence of $n$-point configurations in $\q_d$  for which 
    $$E_s^k(\underline \omega_n) < (\underline \l+\epsilon)n^{1+s/d} , \qquad n\in \uline{\mathscr N}.$$
  We shall show that
    \[
        \overline \l := \limsup_{N\to\infty} \frac{\mathcal E_s^k(\q_d, N)}{N^{1+s/d}},
    \] 
    equals $\underline \l$, establishing the existence of the limit \eqref{CsdkDef}.

  For any $N\geq 1$, let  $ \{\overline \omega_N \} $ be an $N$-point configuration in $\q_d$ such that
    \begin{equation}\label{Eskomegaup}E_s^k(\overline\omega_N) < \mathcal E_s^k(\q_d, N) + 1.
    \end{equation}
  Let    $n\in\uline{\mathscr N}$ and  $L$   be the unique positive integer  such that 
    \begin{equation}
        \label{eq:squeeze}
        n(L-1)^d  \leq N < nL^d.
    \end{equation}
    By Corollary~\ref{cor:covering}, there is a positive constant $c(s,k,d)$ such that the distance to the $ k $-th nearest neighbor in $ \underline \omega_n $ is at most $ c(s,k,d) n^{-1/d} $.  Let
    \[
        \gamma := 1- c(s,k,d) n^{-1/d},    \]
 and consider the  following configuration obtained by tiling $ \q_d $ with copies of $\gamma \underline \omega_n / L $:
    \[
        \omega := \bigcup_{
        \boldsymbol i\in\, (L\mathbb Z)^d }
        \left(\frac\gamma{L}\underline \omega_n +  \frac{\boldsymbol i}L\right),
    \]
    where $ L\mathbb Z := \{ 0,1,\ldots, L-1 \} $.  We observe that with this choice of $\gamma$    the $ k $ nearest neighbors in $ \omega $ for every point in the subset $ \frac\gamma{L}\underline \omega_n +  \frac{\boldsymbol i}L $ also belong to this subset. 
    
    We next  obtain an upper bound for $ E_s^k(\overline\omega_N) $ using $\omega$.
       By the scale invariance of $ E_s^k $ there holds
    \[
        E_s^k\left(\frac\gamma{L}\underline \omega_n\right) =  \left(\frac L\gamma\right)^s E_s^k(\underline\omega_n),
    \]
    and so from \eqref{Eskomegaup} and \eqref{eq:squeeze}, we have 
 \begin{equation}\label{Eskupperbnd}
        \begin{aligned} 
             \frac{E_s^k(\overline\omega_N)}{N^{1+s/d}}
            &\leq \frac{E_s^k(\omega)+1}{N^{1+s/d}} = \frac{L^d\, \left(\frac L\gamma\right)^s E_s^k(\underline\omega_n) +1}{N^{1+s/d}} \\
            &
            \leq 
            \frac{L^d\, \left(\frac L\gamma\right)^s E_s^k(\underline\omega_n)}{(n(L-1)^d)^{1+s/d}} + N^{-1-s/d} \\
            &\leq \gamma^{-s} \left(\frac{L}{L-1}\right)^{s+d}(\underline \l+\epsilon)+N^{-1-s/d},
    \end{aligned}
\end{equation}
    where in the first equality we used that due to the choice of $ \gamma $, no interactions between different tiles enter the sum for $ E_s^k(\omega) $.  Taking the limit superior  as $N\to \infty$ in \eqref{Eskupperbnd} for fixed $n$ which implies   $L\to \infty$, gives
    $$\overline \l\le    \gamma^{-s}(\underline \l+\epsilon).$$
  Then taking $n\to \infty$, $n\in \uline{\mathscr N}$, (in which case $\gamma\to 1$) shows that  
    $ \overline \l\le    \underline \l+\epsilon .$  Since $\epsilon$ is arbitrary, $ \overline \l\le    \underline \l$ and so the limit    $C_{s,d}^k$ in \eqref{CsdkDef}   exists in $[0,\infty]$.

    It remains to show that $C_{s,d}^k$ is finite and positive.  
Notice that   ${\mathcal E}_s^k(\q_d,N)= O(N^{1+s/d} )$ follows by placing the points in $ \omega_N $ in the vertices of the cubic lattice which shows that $C_{s,d}^k$ is finite.  
To see that  $C_{s,d}^k$ is positive, observe that 
for any configuration $ \omega_N \in ([0,1]^d)^N $ of $ N $ distinct points there holds
\[
    E_s^k(\omega_N) = \sum_{i=1}^N \sum_{y\in \nn_k(x_i; \omega_N)} \|x_i-y\|^{-s} \geq  \sum_{i=1}^N \|x_i - (x_i;\omega_N)_1\|^{-s} =:\sum_{i=1}^N r_i^{-s},
\]
where as before we write $ (x_i;\omega_N)_1 $ for the nearest neighbor of $ x_i $. Notice that the interiors of balls $ B(x_i, r_i/2) $ are disjoint and contained in $ [-(1+\sqrt d)/2,\ (1+\sqrt d)/2]^d $, so $ v_d\sum_i r_i^d \leq 2^d (1+\sqrt d)^d $, where $ v_d $ denotes the volume of $ d $-dimensional unit ball. In conjunction with Jensen's inequality this implies
\[
    \sum_{i=1}^N r_i^{-s} = \sum_{i=1}^N (r_i^d)^{-s/d} \geq N \left(\frac1N\sum_{i=1}^N r_i^d\right) ^{-s/d} \geq N^{1+s/d}\, \left(2^d (1+\sqrt d)^d / v_d\right)^{-s/d},
\]
which is the desired lower bound.
\end{proof}
From scale- and translation-invariance of $ E_s^k $, we obtain also the asymptotics for a general cube in $ \mathbb R^d $.
\begin{corollary}
    \label{cor:cube_general}
  For $ s> 0 $,  $ k,d$ positive integers, and a cube $ Q = x +a\q_d \subset \mathbb R^d $ the following limit exists:
 \begin{equation*}
     \lim_{N\to\infty}  \,\frac{\mathcal E_s^k(Q,N)}{N^{1+s/d}} = \frac{C_{s,d}^k}{a^s} = \frac{C_{s,d}^k}{\h_d(Q)^{s/d}}.
\end{equation*}
\end{corollary}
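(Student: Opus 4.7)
The plan is to reduce the claim to the case of the unit cube (Lemma~\ref{thm:cube}) by exploiting two elementary invariance properties of the unweighted functional $E_s^k$: translation invariance and scale covariance. For any configuration $\omega_N=(x_1,\ldots,x_N)$ and any $y\in\mathbb R^d$, the tuple $\omega_N+y:=(x_1+y,\ldots,x_N+y)$ has exactly the same pairwise distances as $\omega_N$, hence the same $k$-nearest-neighbor adjacency graph $\Lambda_k$, and so $E_s^k(\omega_N+y)=E_s^k(\omega_N)$. Likewise, for $a>0$, writing $a\omega_N:=(ax_1,\ldots,ax_N)$, we have $\|ax_i-ax_j\|=a\|x_i-x_j\|$, so again $\Lambda_k(a\omega_N)=\Lambda_k(\omega_N)$ (the ordering $\prec_x$ is unchanged by positive scaling), giving $E_s^k(a\omega_N)=a^{-s}E_s^k(\omega_N)$.

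First I would observe that the map $\omega_N\mapsto x+a\omega_N$ is a bijection from $(\mathbb Q_d)^N$ onto $Q^N$. Combining this with the two invariances above yields
\[
\mathcal E_s^k(Q,N)=\inf_{\omega_N\in(\mathbb Q_d)^N}E_s^k(x+a\omega_N)=\inf_{\omega_N\in(\mathbb Q_d)^N}a^{-s}E_s^k(\omega_N)=a^{-s}\mathcal E_s^k(\mathbb Q_d,N).
\]
Next, dividing by $N^{1+s/d}$ and taking $N\to\infty$ on both sides, and invoking Lemma~\ref{thm:cube} for the existence and value of the limit on the right, we obtain
\[
\lim_{N\to\infty}\frac{\mathcal E_s^k(Q,N)}{N^{1+s/d}}=\frac{C_{s,d}^k}{a^s}.
\]
Finally, since $\mathcal H_d(Q)=\mathcal L_d(a\mathbb Q_d)=a^d$, the denominator can be rewritten as $a^s=\mathcal H_d(Q)^{s/d}$, giving the stated equality.

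There is no real obstacle here; the entire content of the corollary is packaged in the two invariances and Lemma~\ref{thm:cube}. The only minor subtlety worth noting is that the tie-breaking rule defining $\nn_k$ depends on the indices of entries, not on their positions, so both translation and positive scaling preserve $\nn_k(\cdot;\omega_N)$ literally (not merely up to a tie-breaking choice), which is why the equality $E_s^k(x+a\omega_N)=a^{-s}E_s^k(\omega_N)$ is exact rather than merely asymptotic.
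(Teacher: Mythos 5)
Your proof is correct and follows exactly the route the paper takes: the paper states Corollary~\ref{cor:cube_general} as an immediate consequence of the translation invariance and scaling relation $E_s^k(a\omega_N)=a^{-s}E_s^k(\omega_N)$ together with Lemma~\ref{thm:cube}, which is precisely the reduction you spell out (including the correct observation that positive scaling and translation preserve the nearest-neighbor structure and hence the tie-breaking). No gaps; your write-up simply makes explicit what the paper leaves as a one-line remark.
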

The lower bound on $ \mathcal E_s^k $ derived in the proof of Lemma~\ref{thm:cube} relied on the condition $ A = [-1/2,1/2]^d $. To verify the short-range property, it will be necessary that  $ \mathcal E_s^k(A,N) $ grow to infinity with $ N $, for which it suffices to assume that $ A $ is compact. In the following lemma we establish such growth for energy on compact sets.
\begin{lemma}
    \label{lem:lower_bound_nonsmooth}
    Suppose $ A\subset \mathbb R^d $ is a compact set, $ k\geq 1 $, and $s>0$. Then
    \[
        \liminf_{N\to\infty} \frac{\mathcal E_s^k(A,N)}{N^{1+s/d}} \geq C(s,d)\,(\mathcal H_d(A))^{-s/d}.
    \]
\end{lemma}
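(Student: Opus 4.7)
My plan is to reduce to the cube asymptotics of Lemma~\ref{thm:cube} via a finite covering of $A$ by cubes, using the fact that $\h_d(A)=\mathcal{L}_d(A)$ for $A\subset\mathbb{R}^d$. Fix $\epsilon>0$; by outer regularity of Lebesgue measure and compactness, cover $A$ by finitely many closed cubes $Q_1,\ldots,Q_M\subset\mathbb{R}^d$ with pairwise disjoint interiors such that $\sum_{j=1}^M \mathcal{L}_d(Q_j)\leq \mathcal{L}_d(A)+\epsilon$ (e.g., take cubes from a sufficiently fine dyadic grid intersecting an open set $U\supset A$ of slightly larger measure).

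Given a configuration $\omega_N\subset A$, assign each point to exactly one of the cubes containing it and let $\omega_{N,j}$ be the resulting sub-configuration of $N_j$ points in $Q_j$, with $\sum_j N_j = N$. Since $\omega_{N,j}\subset\omega_N$, for each $x\in\omega_{N,j}$ the $l$-th nearest point to $x$ in $\omega_{N,j}\setminus\{x\}$ is at least as far as the $l$-th nearest in $\omega_N\setminus\{x\}$. A term-by-term comparison (valid also when $N_j\leq k$, so that $\nn_k(x;\omega_{N,j})$ is shorter than $\nn_k(x;\omega_N)$) then yields the sub-additivity
\[
    \sum_{j=1}^M E_s^k(\omega_{N,j})\leq E_s^k(\omega_N),
\]
so that $E_s^k(\omega_N)\geq \sum_{j=1}^M \mathcal{E}_s^k(Q_j,N_j)$.

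By the scale invariance $\mathcal{E}_s^k(Q,n)=\mathcal{L}_d(Q)^{-s/d}\mathcal{E}_s^k(\q_d,n)$ together with Lemma~\ref{thm:cube}, for any $\eta>0$ there is some $N_0=N_0(\eta,s,k,d)$, independent of the cube, such that $\mathcal{E}_s^k(Q,n)\geq (C_{s,d}^k-\eta)n^{1+s/d}\mathcal{L}_d(Q)^{-s/d}$ for all $n\geq N_0$. Discarding the at most $M$ cubes containing fewer than $N_0$ points, and writing $J=\{j:N_j\geq N_0\}$, one has $\sum_{j\in J}N_j\geq N-MN_0$, and
\[
    E_s^k(\omega_N)\geq (C_{s,d}^k-\eta)\sum_{j\in J}\frac{N_j^{1+s/d}}{\mathcal{L}_d(Q_j)^{s/d}}.
\]
Applying H\"older's inequality with conjugate exponents $(d+s)/d$ and $(d+s)/s$ to the decomposition $N_j = \bigl(N_j\mathcal{L}_d(Q_j)^{-s/(d+s)}\bigr)\cdot \mathcal{L}_d(Q_j)^{s/(d+s)}$ gives
\[
    \sum_{j\in J}\frac{N_j^{1+s/d}}{\mathcal{L}_d(Q_j)^{s/d}} \geq \frac{\bigl(\sum_{j\in J}N_j\bigr)^{1+s/d}}{\bigl(\sum_{j\in J}\mathcal{L}_d(Q_j)\bigr)^{s/d}} \geq \frac{(N-MN_0)^{1+s/d}}{(\mathcal{L}_d(A)+\epsilon)^{s/d}}.
\]

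Dividing by $N^{1+s/d}$, passing to the liminf as $N\to\infty$ with $M,N_0,\eta,\epsilon$ fixed (so $(1-MN_0/N)^{1+s/d}\to 1$), and finally letting $\eta,\epsilon\to 0^+$, yields the claimed bound with $C(s,d)=C_{s,d}^k$. I expect the main technical step to be the sub-additivity inequality: one must verify that truncation to a sub-configuration can only decrease the point-energy at each retained point, which is clean in spirit but requires handling the case when a cube contains fewer than $k+1$ points of $\omega_N$.
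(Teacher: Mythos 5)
Your argument is correct, but it takes a genuinely different route from the paper. The paper proves the lemma directly: it covers $A$ by balls of radius $r$ with bounded overlap (Besicovitch), notes that all but at most $M$ points have nearest-neighbor distance $\leq 2r$, uses disjointness of the balls $B(x,r_x/2)$ to bound $\sum_x r_x^d$, and applies Jensen's inequality, giving an explicit constant $C(s,d)$ and $\h_d(A_r)^{-s/d}$, then lets $r\downarrow 0$; in particular it does not invoke the cube asymptotics at all. You instead reduce to Lemma~\ref{thm:cube} via a finite cover of $A$ by cubes of nearly minimal total volume, the restriction sub-additivity $\sum_j E_s^k(\omega_{N,j})\leq E_s^k(\omega_N)$ (the same monotonicity-under-restriction mechanism the paper uses in Lemma~\ref{lem:short_range}), exact scale invariance on cubes, and H\"older; all steps check out, including the handling of cubes with fewer than $N_0$ points and the case $\h_d(A)=0$, and there is no circularity since Lemma~\ref{thm:cube} precedes this lemma and its proof does not use it. Two small remarks: the statement asserts a constant $C(s,d)$ independent of $k$, whereas your route yields $C_{s,d}^k$; since $E_s^k$ is nondecreasing in $k$, one has $C_{s,d}^k\geq C_{s,d}^1$, so you may take $C(s,d)=C_{s,d}^1$ (or note that a $k$-dependent constant suffices for the lemma's later use, which is only to guarantee $\mathcal E_s^k(A,N)\gtrsim N^{1+s/d}$). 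What each approach buys: yours is shorter given the cube lemma and produces the sharper constant $C_{s,d}^k$; the paper's is self-contained, gives an explicit elementary constant, and keeps the lemma independent of the asymptotics machinery.
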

\begin{proof}
    Clearly, it is sufficient to assume $ k=1 $.
    Fix an $ \epsilon > 0 $. By Besicovitch's covering theorem~\cite[Theorem 2.7]{mattila1995geometry}, for every sufficiently small $ r>0 $ there exist a collection of balls $ \{ B_m \}_1^M := \{ B(x_m,r) \}_{m=1}^M $ that cover $ A $ and satisfy
    \[
        \sum_{m=1}^M v_d r^d = Mv_dr^d \leq c(d) \mathcal H_d (A_r),
    \]
    where as usual, $ A_r $ is the $ r $-neighborhood of $ A $.
    In fact, each point of $ \mathbb R^d $ is contained in at most $ c(d) $ balls among $ \{ B_m \}_{m=1}^M $. 
    Let $ \omega_N \in A^N $  be an arbitrary configuration of $ N $ points and 
    denote by $ \omega_N' $ the subconfiguration of its elements $ x $, contained in some $ B_m $ that also contains at least one other element of $ \omega_N $. Then $ \#(\omega_N \setminus \omega_N') \leq M $.
    In addition, whenever $ B_m $ contains at least $ 2 $ elements of $ \omega_N $, for every such element $ x\in \omega_N' \cap B_m $ there holds
    \[ 
        r_x = \|x - (x;\omega_N)_1 \| \leq  2r.
    \]
    Note that the balls $ \{ B(x, r_x/2) \}_{x\in\omega_N} $ are disjoint, which gives
    \[
        \sum_{x\in B(x_m,r)}  v_d r_x^d \leq  v_d(4r)^d,
    \]
    Applying Jensen's inequality, one has
    \[
        \begin{aligned}
        E_s^k(\omega_N) 
        &\geq \sum_{x\in\omega_N} r_x^{-s} \geq \sum_{x\in\omega_N'} r_x^{-s} =  \sum_{x\in\omega_N'} (r_x^d)^{-s/d} \\
        &\geq \#\omega_N' \left(\frac1{\#\omega_N'} \sum_{x\in\omega_N'} r_x^d\right)^{-s/d} \geq (\#\omega_N')^{1+s/d} (4^d Mr^d)^{-s/d}\\
        &\geq (N-M)^{1+s/d} C(s,d) (\mathcal H_d(A_r))^{-s/d}.
        \end{aligned}
    \]
    Since $ M $ is fixed, this gives further
    \[
        \liminf_{N\to\infty} \frac{\mathcal E_s^k(A,N)}{N^{1+s/d}} \geq C(s,d)(\mathcal H_d(A_r))^{-s/d}.
    \]
    By taking $ r\downarrow0 $, the lemma follows.
\end{proof}
We conclude this section with the proof of the short-range property for the functional $ E_s^k $. The proof will use that $ \mathcal E_s^k(A,N) $ grows to infinity, as we just established for all compact subsets of $ \mathbb R^p $.
\begin{lemma}
    \label{lem:short_range}
    Let $ A_1, A_2 \subset \mathbb R^p $ be disjoint compact sets.   If $ \omegaNseq $ is a sequence of $N$-point configurations in  $ A_1\cup A_2 $ for $ N \geq 2 $,   then
    \begin{equation*}
        \lim_{N\to\infty} \frac{  E_s^k(\omega_N\cap A_1)+E_s^k(\omega_N\cap A_2)}{E_s^k(\omega_N)} = 1.
    \end{equation*}
\end{lemma}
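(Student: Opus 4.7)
The plan is to show that $|E_s^k(\omega_N \cap A_1) + E_s^k(\omega_N \cap A_2) - E_s^k(\omega_N)| = O(N)$ while $E_s^k(\omega_N)$ grows strictly faster than $N$, so the ratio in the claim tends to $1$. Since $A_1, A_2 \subset \mathbb R^p$ are disjoint and compact, $\delta := \dist(A_1, A_2) > 0$, and every ``cross-set'' pair $(x,y) \in \Lambda_k(\omega_N)$ with $x,y$ in different sets contributes at most $\delta^{-s}$ to the energy.

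The key step, and the main obstacle, is a per-point comparison: for each $x \in \omega_N \cap A_1$ (symmetric for $A_2$), I would compare
\[
    P(x) := \sum_{y \in \nn_k(x;\omega_N)} \|x-y\|^{-s}, \qquad  P_1(x) := \sum_{y \in \nn_k(x;\omega_N \cap A_1)} \|x-y\|^{-s},
\]
the contributions of $x$ to $E_s^k(\omega_N)$ and $E_s^k(\omega_N\cap A_1)$ respectively, and show $|P(x)-P_1(x)|\le 2k\delta^{-s}$. Any $y$ appearing in $\nn_k(x;\omega_N)$ but not in $\nn_k(x;\omega_N\cap A_1)$ lies in $A_2$, so contributes at most $\delta^{-s}$, and there are at most $k$ such terms. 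Conversely, if $y \in \nn_k(x;\omega_N \cap A_1)$ does not appear in $\nn_k(x;\omega_N)$, a counting argument shows that at most $k-1$ points of $\omega_N \cap A_1$ lie closer to $x$ than $y$, while at least $k$ points of $\omega_N$ do, so some $A_2$-point must lie in $\nn_k(x;\omega_N)$; this forces $\|x-(x;\omega_N)_k\| \geq \delta$, and hence $\|x-y\|^{-s} \leq \delta^{-s}$, again for at most $k$ such terms. The delicate part is checking that the counting remains valid in the edge case $|\omega_N \cap A_1| \leq k$, where $\nn_k(x;\omega_N \cap A_1) = (\omega_N \cap A_1) \setminus \{x\}$ by convention; in fact it does, since the inequality ``at most $k-1$ $A_1$-points closer to $x$ than $y$'' only becomes stronger. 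Summing over $x \in \omega_N$ yields the desired $O(N)$ bound.

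For the denominator, I would mimic the Jensen-based argument in the proof of Lemma~\ref{lem:lower_bound_nonsmooth}, adapted to the ambient space $\mathbb R^p$: since $A_1 \cup A_2 \subset \mathbb R^p$ is compact, the open balls $B(x, r_x/2)$ for $x\in\omega_N$, with $r_x := \|x - (x;\omega_N)_1\|$, are pairwise disjoint and contained in a fixed bounded region of $\mathbb R^p$, giving $\sum_{x\in\omega_N} r_x^p \le C$ for some constant independent of $N$. Applying Jensen's inequality to the convex map $t\mapsto t^{-s/p}$ yields
\[
    E_s^k(\omega_N) \ge \sum_{x\in\omega_N} r_x^{-s} \ge N\left(\frac{1}{N}\sum_{x\in\omega_N} r_x^p\right)^{-s/p} \ge C' N^{1 + s/p}.
\]
Since $1+s/p > 1$, the $O(N)$ error from the previous step is dominated by $E_s^k(\omega_N)$, and the claimed limit follows.
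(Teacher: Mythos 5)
Your proof is correct and follows essentially the same route as the paper's: the discrepancy between the split and full energies is controlled by at most $O(kN)$ cross-set terms, each bounded by $\dist(A_1,A_2)^{-s}$, and this is negligible against the superlinear ($\gtrsim N^{1+s/p}$) growth of $E_s^k(\omega_N)$. The only cosmetic differences are that the paper gets the inequality $E_s^k(\omega_N\cap A_1)+E_s^k(\omega_N\cap A_2)\le E_s^k(\omega_N)$ exactly, via monotonicity of nearest-neighbor distances under passing to subconfigurations, rather than your two-sided $O(N)$ per-point bound, and it cites Lemma~\ref{lem:lower_bound_nonsmooth} for the superlinear lower bound instead of re-deriving it with the packing/Jensen argument in ambient dimension $p$.
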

\begin{proof}
    Notice that for any $ x\in \omega_N $,
    \[
        \|x- (x;\omega_N)_l\| \leq \|x- (x;\omega_N\cap A_m)_l\|, \qquad 1\leq l \leq k, \ m=1,2. 
    \]
    As a result, there holds
    \[
        \sum_{y\in\nn_k(x;\omega_N)} \|x-y\|^{-s} \geq \sum_{y\in\nn_k(x;\omega_N \cap A_m)} \|x-y\|^{-s}, \qquad x\in\omega_N, \ m=1,2,
    \]
    which gives
    \[
          E_s^k(\omega_N\cap A_1)+E_s^k(\omega_N\cap A_2) \leq E_s^k(\omega_N)
    \]
    and 
    \[
        \limsup_{N\to\infty} \frac{  E_s^k(\omega_N\cap A_1)+E_s^k(\omega_N\cap A_2)}{E_s^k(\omega_N)} \leq 1.
    \]

    It remains to derive the   estimate for the lower limit.
    Since $ A_1, A_2 $ are compact and disjoint, 
    $$ h:= \dist(A_1,A_2) > 0. $$
    Pick an element $ x\in \omega_N $; without loss of generality, $ x\in A_1 $. There are two possibilities: i) $ \nn_k(x;\omega_N) \subset A_1 $, in which case all the terms of the form
    \[
        \sum_{y\in\nn_k(x;\omega_N)} \|x-y\|^{-s}
    \]
    are shared by the two sums $ E_s^k(\omega_N\cap A_1)+E_s^k(\omega_N\cap A_2) $ and $ E_s^k(\omega_N) $; ii) $ \nn_k(x;\omega_N) \cap A_2\neq \emptyset$, in which case  (see Section~\ref{sec:outline} for the adjacency graph notation)
    \[
        \Lambda_k(\omega_N) \setminus [\Lambda_k(\omega_N\cap A_1) \cup \Lambda_k(\omega_N\cap A_2)] \supset \{ (x,y) : y\in \nn_k(x;\omega_N), \ y\in A_2 \};
    \]
    in other words, some of the edges connecting $ x $ to its nearest neighbors in $ \omega_N $ are missing from the union of adjacency graphs $ \bigcup_{m=1}^2 \Lambda_k(\omega_N\cap A_m) $. On the other hand, all the terms occurring in $ E_s^k(\omega_N) $ but not in $ E_s^k(\omega_N\cap A_1)+E_s^k(\omega_N\cap A_2) $ are precisely of this form, so collecting all the pairs with $ x\in A_1 $ into
    \[
        G_1 : = \bigcup_{x\in\omega_N\cap A_1} \{ (x,y) : y\in \nn_k(x;\omega_N), \ y\in A_2 \}
    \]
    and those with $ x\in A_2 $ into
    \[
        G_2 : = \bigcup_{x\in\omega_N\cap A_2} \{ (x,y) : y\in \nn_k(x;\omega_N), \ y\in A_1 \},
    \]
    we conclude
    \[
        \Lambda_k(\omega_N) \setminus [\Lambda_k(\omega_N\cap A_1) \cup \Lambda_k(\omega_N\cap A_2)] = G_1\cup G_2.
    \]
    It follows
    \[
        \begin{aligned}
        E_s^k(\omega_N) 
        &= \sum_{(x,y) \in \Lambda_k(\omega_N)} \|x-y\|^{-s} \\
        &\leq \left(
        \sum_{(x,y) \in \Lambda_k(\omega_N\cap A_1)} + \sum_{(x,y) \in \Lambda_k(\omega_N\cap A_2)} + \sum_{(x,y) \in G_1} + \sum_{(x,y) \in G_2 }\right) \|x-y\|^{-s}\\
        &\leq E_s^k(\omega_N \cap A_1) + E_s^k(\omega_N \cap A_2) + Nk\,h^{-s}.
        \end{aligned}
    \]
    Here in the last inequality we used that $ \|x-y\|^{-s}  \leq h^{-s} $ for $ x,y $ placed in different $ A_m $, and that the total number of edges in $ \Lambda_k(\omega_N) $ is $ Nk $. Using Lemma~\ref{lem:lower_bound_nonsmooth} with $ p=d $, we conclude that $ Nk\,h^{-s} = o(E_s^k(\omega_N)) $, \(N\to\infty\), whence dividing the last display through by $ E_s^k(\omega_N) $ and taking $ N $ to infinity yields 
    \[
        \liminf_{N\to\infty} \frac{E_s^k(\omega_N \cap A_1) + E_s^k(\omega_N \cap A_2)}{E_s^k(\omega_N)} \geq 1,
    \]
    completing the proof of the lemma.
\end{proof}

\subsection{Stability and poppy-seed bagel asymptotics for \texorpdfstring{$ E_s^k $}{Esk}}
In this section we establish a set-stability result for $ E^k_s $ as described in \eqref{eq:set_conts}.   This lemma will play a key role in the proof of Theorem~\ref{thm:poppy_seed_k} which is the main goal of this section.

\begin{lemma}
    \label{lem:stable}
    For a compact set $ A \subset \mathbb R^{p} $ with $ 0 < \mathcal M_d(A) < \infty  $, $ s> 0 $, and any $ \epsilon > 0 $, there exists a $ \delta = \delta(\epsilon, s,k,p,d,A) > 0 $ such that the inequalities
    \[
        \liminf_{N\to \infty}\frac{\mathcal E^k_s(A,N)}{N^{1+s/d}} \geq (1-\epsilon)
        \liminf_{N\to \infty} \frac {\mathcal E^k_s(D,N)}{N^{1+s/d}}, \qquad 
        \limsup_{N\to \infty}\frac{\mathcal E^k_s(A,N)}{N^{1+s/d}} \geq (1-\epsilon)
        \limsup_{N\to \infty} \frac {\mathcal E^k_s(D,N)}{N^{1+s/d}}
    \] 
    hold whenever a compact set $  D \subset A $ satisfies $ \m_d (D) > (1-\delta) \m_d (A)  $. In the case $ d=p $, $ \delta $ can be chosen independently of $ A $.
\end{lemma}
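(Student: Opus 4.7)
The inequality $\mathcal E_s^k(A,N)\le \mathcal E_s^k(D,N)$ comes for free from monotonicity (property~(i) of the outline in Section~\ref{sec:outline}), so the content of the lemma is the reverse direction up to a factor of $(1-\epsilon)$. My plan is to take an approximate minimizer $\omega_N^*$ of $\mathcal E_s^k(A,N)$ and surgically modify it into a configuration contained in $D$ whose energy exceeds $E_s^k(\omega_N^*)$ only by an additive term of order $\epsilon\,N^{1+s/d}$.

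First, I would combine Corollary~\ref{cor:cube_general} and Lemma~\ref{lem:short_range} (via a grid construction on a neighborhood of $A$) to obtain an a~priori upper bound $\mathcal E_s^k(A,N)\le K_A\,N^{1+s/d}$, and observe that Lemma~\ref{lem:lower_bound_nonsmooth}---or a Minkowski-content analog when $p>d$---provides the matching positive lower bound $\liminf_N \mathcal E_s^k(B,N)/N^{1+s/d}\ge C(s,d,k)\,\m_d(B)^{-s/d}$ for any compact $B$ with $\m_d(B)>0$. Applying the latter to the subconfiguration of $\omega_N^*$ lying in $\overline{A\setminus D}$ (whose $d$-content is at most $\delta\,\m_d(A)$, at least in the case $d=p$ where Lebesgue measure is additive), and combining with the upper bound on $E_s^k(\omega_N^*)$, yields
\[
C(s,d,k)\bigl(\delta\,\m_d(A)\bigr)^{-s/d}\,N_2^{1+s/d}\ \le\ E_s^k(\omega_N^*)\ \le\ K_A\,N^{1+s/d},
\]
where $N_2:=\#(\omega_N^*\setminus D)$, and consequently $N_2/N\le \eta(\delta):=C'\delta^{s/(s+d)}\downarrow 0$ as $\delta\downarrow 0$.

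Next comes the surgery. Since $\m_d(D)>0$ one can fix a closed cube $Q\subset D$ of positive sidelength (when $p>d$, pulled back through a bi-Lipschitz chart into $D$). Inside a sub-cube $Q_N\subset Q$ of sidelength $\asymp (N_2/N)^{1/d}$ place a near-optimizer $\sigma_N$ for $\mathcal E_s^k(Q_N,N_2+N_Q)$, where $N_Q:=\#(\omega_N^*\cap Q_N)$, and set
\[
\tilde\omega_N := \bigl((\omega_N^*\cap D)\setminus Q_N\bigr)\cup\sigma_N\ \subset\ D,
\]
a configuration of exactly $N$ points in $D$. The scaling of the sidelength makes the separation inside $\sigma_N$ comparable to $N^{-1/d}$, and by Corollary~\ref{cor:cube_general} its internal energy is $O\bigl((N_2/N)\,N^{1+s/d}\bigr)$. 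The energy comparison $E_s^k(\tilde\omega_N)\le E_s^k(\omega_N^*)+O\bigl((N_2/N)\,N^{1+s/d}\bigr)$ then has three pieces: removal of points of $\omega_N^*$ lying in $(A\setminus D)\cup Q_N$ only lowers the $k$-nearest-neighbor energy of the retained points, since their nearest-neighbor distances can only grow; the internal contribution of $\sigma_N$ is controlled as above; and cross-interactions between $\sigma_N$ and $(\omega_N^*\cap D)\setminus Q_N$ are bounded using Theorem~\ref{thm:separation} (both families have separation $\gtrsim N^{-1/d}$, so each interaction costs $O(N^{s/d})$) together with Lemma~\ref{lem:few_nns} (in-degree in the NN graph is bounded by $n(k,d)$), totalling $O(N_2\,N^{s/d})=O\bigl((N_2/N)\,N^{1+s/d}\bigr)$.

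Combining, $\mathcal E_s^k(D,N)\le E_s^k(\tilde\omega_N)\le \mathcal E_s^k(A,N)+C_1\,\eta(\delta)\,N^{1+s/d}+o(N^{1+s/d})$; dividing through by $N^{1+s/d}$ and passing to $\liminf$ or $\limsup$, the additive error is converted into the required multiplicative $(1-\epsilon)$ factor by invoking the positive lower bound $\liminf \mathcal E_s^k(D,N)/N^{1+s/d}\ge C(s,d,k)\,\m_d(D)^{-s/d}>0$. For $d=p$ all the constants appearing depend only on $s$, $d$, $k$, and on the ratio $\m_d(D)/\m_d(A)\ge 1-\delta$, so $\delta$ can be chosen independently of $A$, as claimed. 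The principal technical obstacle lies in the third piece of the energy comparison: inserting the cluster $\sigma_N$ into $D$ must not create many close collisions with the retained points of $\omega_N^*$, and only a small number of retained points should end up acquiring $\sigma_N$-points as nearest neighbors. Theorem~\ref{thm:separation} and Lemma~\ref{lem:few_nns} are exactly the right tools, but each must be applied carefully to the \emph{mixed} configuration $\tilde\omega_N$ rather than to $\omega_N^*$ itself.
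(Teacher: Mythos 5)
Your overall scheme (build from a near-minimizer on $A$ a comparison configuration inside $D$, then convert the additive error into the factor $(1-\epsilon)$ using the positive lower bound on the $D$-asymptotics) is reasonable, but two of its steps fail for the sets the lemma actually allows, where $D$ is an \emph{arbitrary} compact subset with $\m_d(D)>(1-\delta)\m_d(A)$. First, the count of relocated points: you bound $N_2=\#(\omega_N^*\setminus D)$ by applying the energy lower bound to the subconfiguration lying in $\overline{A\setminus D}$ and asserting that this set has $d$-content at most $\delta\,\m_d(A)$. Minkowski content is not subtractive in this way: already for $p=d$, take $A=[0,1]^d$ and $D$ a fat-Cantor-type compact subset with $\mathcal L_d(D)>1-\delta$ and empty interior; then $\overline{A\setminus D}=A$, your bound on $N_2$ is vacuous, and in fact there exist near-minimizers of $\mathcal E_s^k(A,N)$ with \emph{no} points in $D$ at all (perturb any configuration off the nowhere dense set $D$ at negligible energy cost), so the strategy ``keep $\omega_N^*\cap D$, relocate the rest'' cannot get started; for $p>d$ the situation is worse. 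What the hypothesis $\m_d(D)>(1-\delta)\m_d(A)$ really controls is the volume of neighborhoods, $\mathcal L_p[A(r)]-\mathcal L_p[D(r)]$ for small $r$; the paper combines this with the separation $\Delta(\omega_N^*)\gtrsim N^{-1/d}$ from Theorem~\ref{thm:separation} and a packing argument to show that all but $O(\delta\gamma^{-d}N)$ points of $\omega_N^*$ lie within $2\gamma N^{-1/d}$ of $D$, and then \emph{projects} those points onto nearest points of $D$; this contracts mutual distances by at most the factor $(1-4\gamma/C)$, so the energy changes only by a controlled multiplicative factor, and the few exceptional points are simply dropped (the resulting cardinality deficit is what forces the separate argument for the limsup). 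Second, your relocation target need not exist: a compact set of positive Minkowski content need not contain any cube, nor any bi-Lipschitz image of one (again a fat Cantor set, or a nonrectifiable $D$ when $p>d$), so ``fix a closed cube $Q\subset D$'' is unjustified.

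A smaller but genuine gap is the cross-interaction estimate for the glued configuration $\tilde\omega_N$: separation of the two pieces separately does not give separation of their union (a point of $\sigma_N$ can sit arbitrarily close to a retained point just outside $Q_N$), and Theorem~\ref{thm:separation} applies to near-minimizers of the energy, not to the surgically assembled $\tilde\omega_N$; you would need a buffer zone, as in the $\gamma$-shrinking used in the proof of Lemma~\ref{thm:cube}, before the claimed $O(N_2\,N^{s/d})$ cross-term is honest. The paper's proof avoids all of these issues because it never inserts a new cluster: it only perturbs points by $O(\gamma N^{-1/d})$ and deletes a small fraction of them.
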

\begin{proof}
    Let $ \omega_N^* $ be a sequence of configurations satisfying $ E_s^k(\omega_N^*) < \mathcal E_s^k(A,N) + 1 $, $ N\geq 1 $. According to Theorem~\ref{thm:separation}, the separation for this sequence satisfies $ \Delta(\omega_N^*) \geq C N^{-1/d} $ for $ C = C(s,k,d,A) $.     

    The proof will consist in demonstrating a way to retract configurations from $ A $ to $ D $ without increasing the value of $ E^k_s $ on them too much. 
    We will first show that most of $ x_i\in\omega_N^* $ have a point from $ D $ close to them, for $ N $ sufficiently large. In this proof let us write $ S(r) = S_r $ for the closed $ r $-neighborhood of a set $ S $; observe that $ D(r) \subset A(r) $ for any $ r > 0 $.   

    Let $ \delta \in (0,1) $ and $ D \subset A $ be a compact set satisfying $ \m_d (D) > (1-\delta) \m_d (A) $. Then for all sufficiently small $ r>0 $, $ \mathcal L_p\left[A(r)\right] - \mathcal L_p\left[D(r)\right] < v_{{p}-d} r^{p-d}\, 3\delta \m_d(A) $, by the definition of Minkowski content. In particular, for
    $ N \geq N_0 =  N_0(A,D,\delta)$ and any $ 0< \gamma < C/4 $ there holds
    \[
        \mathcal L_p\left[A(\gamma N^{-1/d})\right] - \mathcal L_p\left[D(\gamma N^{-1/d})\right] < v_{{p}-d} \gamma^{{p}-d} N^{-({p}-d)/d}\, 3\delta \m_d(A).
    \]
    Thus the number of disjoint balls of radius $ \gamma N^{-1/d} $ that can be contained in $ A(\gamma N^{-1/d})\setminus D(\gamma N^{-1/d}) $ is at most 
    \[
        \frac{v_{{p}-d} \gamma^{{p}-d} N^{-({p}-d)/d}\, 3\delta \m_d(A)}{v_p \gamma^{p} N^{-{p}/d}} = \delta\, c({p},d) \gamma^{-d} N \m_d(A).
    \]
    It follows that for at least $ N ( 1- \delta c({p},d)\gamma^{-d}\m_d(A) )  $ points in $ \omega_N^* $, a closest point in $ D $ is at most distance
    \[
        2\gamma N^{-1/d}
    \]
    away. Consider the subconfiguration $  (x_i')_i \subset \omega_N^* $ for which this is the case, and for each $ x_i' $ find a closest point in $ D $. Denote the resulting set by $ \omega $. By the preceding discussion, it can be assumed
    \begin{equation}
        \label{eq:cardinality_bound}
        N_\omega := \#\omega = \left\lfloor N \left( 1- \delta c({p},d) \gamma^{-d}\m_d(A) \right) \right\rfloor, \qquad N \geq N_0. 
    \end{equation}
    Consider a pair $ x_i' $, $ x_j' $; let their nearest points in $ \omega $ be $ y_i $ and $y_j$ respectively. Since the separation between entries of $ \omega_N^* $ is at least $ CN^{-1/d} $, there holds
    \begin{equation*}
        \|y_i - y_j\| \geq (1-4\gamma/C) \| x_i' - x_j' \| > 0, \qquad i \neq j,
    \end{equation*}
    where we used that $ 4\gamma < C $.
    Due to the scaling properties of the kernel $ \|x-y\|^{-s} $, this implies in turn
    \[
        E^k_s(\omega_N^*) \geq (1-4\gamma/C)^{s} E^k_s(\omega).
    \]
    By the estimate \eqref{eq:cardinality_bound} on the cardinality $ N_\omega $, we have finally
    \begin{equation*}
        \frac{E^k_s(\omega_N^*)}{N^{1+s/d}} \geq {(1-4\gamma/C)^{s}}{\left( 1- \delta c({p},d) \gamma^{-d} \m_d(A) \right)^{1+s/d}}\cdot \frac{\mathcal E^k_s(D,N_\omega)}{N_\omega^{1+s/d}}, \qquad N\geq N_0.
    \end{equation*}
    Setting $ \gamma = C\delta^{1/2d} $ gives
    \begin{equation}
        \label{eq:projected_omega_n}
        \frac{\mathcal E^k_s(A,N)+1}{N^{1+s/d}} \geq {(1-4\delta^{1/2d})^{s}}{\left( 1-  \sqrt \delta\,c({p},d)  \m_d(A)/C^d \right)^{1+s/d}}\cdot \frac{\mathcal E^k_s(D,N_\omega)}{N_\omega^{1+s/d}}, \qquad N\geq N_0,
    \end{equation}
    implying the claim of the lemma for $ \liminf $ and a suitably small $ \delta $.

    To prove the claim for $ \limsup $, observe that following the above construction, given a cardinality $ N \geq N_0 $, one obtains cardinality
    \[
        N_\omega = \left\lfloor N\left(1- \sqrt \delta\,c({p},d)  \m_d(A)/C^d\right)\right\rfloor =: \lfloor N(1-c\sqrt\delta) \rfloor,
    \]
    for which inequality~\eqref{eq:projected_omega_n} holds. Here $ c = c(s,k,p,d,A) $. 
    Since the image of the set $\{ N : N \geq N_0 \}$ under the mapping 
    \[
        n \mapsto \lfloor n(1-c\sqrt\delta )\rfloor 
    \]
    contains $ \{ N : N\geq N_0 \} $ for $ \delta < (1/2c)^2 $, for every given $ N_\omega \geq N_0 $, there exists a cardinality $ N $, such that the inequality~\eqref{eq:projected_omega_n} holds with these particular values of $ N $ and $ N_\omega $.
    Now let $ \mathscr N \subset \mathbb N $ be a sequence along which 
    \[
        \limsup_{N\to \infty} \frac {\mathcal E^k_s(D,N)}{N^{1+s/d}}
    \]
    is attained; taking $ \mathscr N \ni N_\omega \to \infty $ in \eqref{eq:projected_omega_n} completes the proof for $ \limsup $.

    Finally, for $ d=p $, notice that by Corollary~\ref{cor:almost_minimizer_separation}, $ \m_d(A) / C^d = c(s,d,k) $ for sufficiently large $ N $, so indeed $ \delta $ in \eqref{eq:projected_omega_n} is independent of $ A $. This proves the last claim of the lemma.
\end{proof}
In the last auxiliary result before the main theorem of this section, we show that any functional equipped with the monotonicity, short-range, and stability properties from Section~\ref{sec:outline}, and for which the asymptotics are known for all compact subsets of $ \mathbb R^d $, also has asymptotics on $ (\h_d,d) $-rectifiable subsets of $ \mathbb R^p $ with $  \Hd(A) = \mathcal{M}_d(A) $, $ p\geq d $. In addition, the formula for the asymptotics coincides with that on the compact subsets of $\mathbb R^d $. The precise statement follows; in it, we say that  functionals $\e_N$, \(N\geq 1\), acting on collections $ \omega_N $, are {\it continuous under near-isometries} if for any $ \epsilon > 0 $, there is a $ \gamma $ such that for every bi-Lipschitz map $ \psi:\mathbb R^p \to \mathbb R^p $ with constant  $ (1+\gamma) $, we have
\[
    (1+\epsilon)^{-1}\, \e_N(\omega_N) \leq \e_N(\psi(\omega_N)) \leq (1+\epsilon) \,\e_N(\omega_N), \qquad N \geq 1.
\]
It is also understood that there is a fixed isometric embedding $ \mathbb R^d \subset \mathbb R^p $, $ d\leq p $, and \( \mathbb R^d \) is identified with its image under the embedding. In particular, functionals on collections of points $ \omega_N \in (\mathbb R^p)^N $ are automatically defined on collections $ \omega_N \in (\mathbb R^d)^N $.
\begin{lemma}
    \label{lem:federer}
    Suppose $ \e_N: (\mathbb R^p)^N \to [0,\infty] $, $ N\geq 1 $, is a sequence of continuous under near-isometries functionals, and a number $ s>0 $ is fixed. For a compact $ A \subset \mathbb R^p $, denote $ \e_N^*(A) :=\inf_{\omega_N \in A^N }\, \e_N(\omega_N) $. Assume that $ \e_N $ have the following properties:
    \begin{enumerate}[1)]
        \item\label{it:monot}
            $\e_N^* (A) \geq \e_N^* (B) $ whenever $ A\subset B \subset \mathbb R^p $ are compact sets.
        \item\label{it:shortrange}
            Suppose $ A_1, A_2 \subset \mathbb R^p $ are disjoint compact sets.   If $ \omegaNseq $ is a sequence of $N$-point configurations in  $ A_1\cup A_2 $, then 
            \begin{equation*}
                \lim_{N\to\infty} \frac{  \e_{N_1}(\omega_N\cap A_1)+\e_{N_2}(\omega_N\cap A_2)}{\e_N(\omega_N)} = 1,
            \end{equation*}
            where $ N_m $ are the cardinalities of the intersections $ \omega_N \cap A_m $, $ m=1,2 $.
        \item\label{it:stable}
            For every compact $ A\subset \mathbb R^p $ and $ \epsilon \in (0,1) $ there is a $\delta>0$ such that whenever a compact $ D \subset A $ satisfies $ \mathcal M_d(D) \geq (1 - \delta )\,\mathcal M_d(A)$, we have
            \begin{equation*}
                \liminf_{N\to \infty}\frac{\e_N^*(A)}{N^{1+s/d}} \geq (1-\epsilon)
                \liminf_{N\to \infty} \frac {\e_N^*(D)}{N^{1+s/d}}.
            \end{equation*}
        \item\label{it:Rdlimit}
            For every compact $ K \subset \mathbb R^d \subset \mathbb R^p $,
            \[
                \lim_{N\to \infty} \frac{\e_N^*(K)}{N^{1+s/d}} = \frac{C_\e}{\h_d(K)^{s/d}}.
            \]
    \end{enumerate}
    Then, for every $ (\h_d,d) $-rectifiable compact set $ A \subset \mathbb R^p $ with $  \Hd(A) = \mathcal{M}_d(A) > 0 $, one has
    \[
        \lim_{N\to \infty} \frac{\e_N^*(A)}{N^{1+s/d}} = \frac{C_\e}{\h_d(A)^{s/d}}.
    \]
\end{lemma}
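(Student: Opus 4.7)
The plan is to first establish the asymptotics on $(1+\gamma)$-bi-Lipschitz images of compact subsets of $\mathbb R^d$, then invoke $(\h_d,d)$-rectifiability to cover $A$ up to a small error by a finite disjoint union of such pieces, and finally use the short-range and stability axioms to transfer the asymptotics to $A$.

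Fix $\epsilon>0$ and let $\gamma=\gamma(\epsilon)$ be given by the near-isometric continuity assumption; fix also an isometric embedding $\mathbb R^d\subset\mathbb R^p$. For a compact $K\subset\mathbb R^d$ and an ambient $(1+\gamma)$-bi-Lipschitz map $\phi:\mathbb R^p\to\mathbb R^p$ (obtained, if needed, by Kirszbraun extension from a bi-Lipschitz map defined on a neighborhood of $K$), near-isometric continuity applied to near-minimizing configurations transfers to minima, giving $(1+\epsilon)^{-1}\e_N^*(K)\le \e_N^*(\phi(K))\le (1+\epsilon)\,\e_N^*(K)$. Combined with $\h_d(\phi(K))=(1+O(\gamma))\,\h_d(K)$ and property~\ref{it:Rdlimit} applied to $K$, this yields
\[
    \lim_{N\to\infty}\frac{\e_N^*(\phi(K))}{N^{1+s/d}} = \frac{(1+O(\epsilon))\,C_\e}{\h_d(\phi(K))^{s/d}}.
\]
By $(\h_d,d)$-rectifiability together with Rademacher's theorem and a Lusin-type restriction (Federer~\cite{federerGeometric1996a}), for every sufficiently small $\epsilon,\gamma>0$ one can select finitely many pairwise disjoint compact sets $A_1,\ldots,A_M\subset A$, each a $(1+\gamma)$-bi-Lipschitz image of some compact $K_i\subset\mathbb R^d$, such that $D:=\bigsqcup_{i=1}^M A_i$ satisfies $\h_d(D)\ge(1-\epsilon)\h_d(A)$. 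Using additivity of $\m_d$ on pairwise well-separated compacts, the relations $\m_d(A_i)=(1+O(\gamma))\h_d(K_i)$, and the standing hypothesis $\h_d(A)=\m_d(A)$, one also obtains $\m_d(D)\ge(1-O(\epsilon))\m_d(A)$, which is precisely what property~\ref{it:stable} requires.

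The upper bound now follows by distributing $N$ points as $N_i:=\lfloor N\h_d(A_i)/\h_d(D)\rfloor$ (adjusted by $O(M)$ so that $\sum_i N_i=N$), placing a near-minimizer on each $A_i$, and applying property~\ref{it:shortrange} iteratively to the disjoint compacts to obtain $\e_N(\omega_N)=(1+o(1))\sum_{i=1}^M \e_{N_i}^*(A_i)$. The Jensen identity
\[
    \sum_{i=1}^M \frac{N_i^{1+s/d}}{\h_d(A_i)^{s/d}}\ge \frac{N^{1+s/d}}{\h_d(D)^{s/d}},\qquad \text{with equality at } N_i\propto\h_d(A_i),
\]
together with the near-isometric step yields $\limsup_N \e_N^*(A)/N^{1+s/d}\le (1+O(\epsilon))\,C_\e/\h_d(D)^{s/d}$; sending $\epsilon\to 0$ gives the claimed upper bound. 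For the lower bound, property~\ref{it:stable} applied to $D\subset A$, followed by property~\ref{it:shortrange} applied to a near-minimizer $\omega_N\subset D$ with $N_i:=\#(\omega_N\cap A_i)$, gives
\[
    \liminf_N \frac{\e_N^*(A)}{N^{1+s/d}} \ge (1-\epsilon)\liminf_N \frac{\e_N^*(D)}{N^{1+s/d}} \ge (1-O(\epsilon))\,\frac{C_\e}{\h_d(D)^{s/d}},
\]
where the second inequality uses the bi-Lipschitz transfer from below together with the same Jensen identity; letting $\epsilon\to 0$ concludes.

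The principal obstacle is the decomposition step: $(\h_d,d)$-rectifiability only provides a countable Lipschitz almost-cover, which must be refined (via Rademacher differentiability and Lusin restriction) into a finite disjoint collection of bi-Lipschitz pieces whose \emph{Minkowski} content---not merely Hausdorff measure---approximates $\m_d(A)$. The hypothesis $\h_d(A)=\m_d(A)$ is exactly what allows one to pass from $\h_d$-approximation to $\m_d$-approximation; without it, property~\ref{it:stable} could not be applied and the lower bound would fail.
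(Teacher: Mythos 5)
Your proposal follows essentially the same route as the paper's proof: the Federer-type decomposition of $A$, up to a small $\h_d$-error, into finitely many disjoint $(1+\gamma)$-bi-Lipschitz images of compact subsets of $\mathbb R^d$, monotonicity and the Minkowski-content stability property to pass between $A$ and the union $D$ of the pieces, the short-range property to split the energy of (near-)minimizers over the pieces, near-isometric transfer together with property~\ref{it:Rdlimit} on each piece, and the convexity/Jensen optimization over the point fractions $N_i/N$. The only bookkeeping refinement needed is to fix $\epsilon$, obtain the stability threshold $\delta=\delta(\epsilon,A)$ first, and then choose the decomposition with $\h_d$-error below $\delta$ (rather than the looser ``$(1-O(\epsilon))$'' bound you state), exactly as the paper does; with that adjustment your plan is correct.
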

\begin{proof}
    Fix an $ \epsilon > 0 $ and let $ \delta >0 $ be as in the stability assumption \ref{it:stable}. Without loss of generality, $ 0< \delta < \epsilon $. By a standard fact from geometric measure theory \cite[Lemma 3.2.18]{federerGeometric1996a}, there exist bi-Lipschitz maps $ \psi_m:\mathbb R^d\to \mathbb R^p $ with constant smaller then $ (1+\gamma) $, and compact sets $ K_m \subset \mathbb R^d $, $ 1\leq m \leq M $ such that sets $ \{ \psi_m(K_m) \} $ are disjoint, contained in $ A $, and
    \[
        \h_d\left(A\setminus \bigcup_{m=1}^M \psi (K_m) \right) < \delta \mathcal H_d(A).
    \]
    Without loss of generality, $  0< \gamma < \epsilon $.
    Denoting $ \tilde A:= \bigcup_m \psi (K_m) \subset A $, from the monotonicity assumption \ref{it:monot} we have
    \begin{equation}
        \label{eq:upper_A}
        \limsup_{N\to \infty} \frac{\e_N^*(A)}{N^{1+s/d}}  \leq \limsup_{N\to \infty} \frac{\e_N^*(\tilde A)}{N^{1+s/d}}.
    \end{equation}
    On the other hand, both $ A $ and $ \tilde A $ are compact, $ (\h_d,d) $-rectifiable, and $ \h_d(A) = \m_d(A) $, $ \h_d(\tilde A) = \m_d(\tilde A) $, see \cite[Lemma 4.3]{borodachovLow2014}; combined with the assumption on $ \delta $, this means the stability property \ref{it:stable} applies, so that
    \begin{equation}
        \label{eq:lower_A}
        \liminf_{N\to \infty} \frac{\e_N^*(A)}{N^{1+s/d}}  \geq (1-\epsilon) \liminf_{N\to \infty} \frac{\e_N^*(\tilde A)}{N^{1+s/d}}.
    \end{equation}
    By the last two displays, it suffices to derive the asymptotics of $ \e_N^* $ for $ \tilde A $. This is where the short-range assumption \ref{it:shortrange} comes into play. 

    Consider a sequence $ \omega_N^* \in (\tilde A)^{N} $, $ N\geq 1 $, such that $ \e_N(\omega_N^*) < \e_N^*(\tilde A) + 1 $. 
    Let $ A_m:= \psi_m(K_m) \subset \tilde A $ and $ N_m:= \#(\omega_N^*\cap A_m) $.
    By passing to a subsequence, the following limits can be assumed to exist:
    \[
        \beta_m := \lim_{N\to \infty} \frac{N_m }{N}, \qquad 1\leq m \leq M.
    \]
    Using assumption \ref{it:shortrange}, the fact that $ A_m $ are disjoint, and the choice of $ \psi_m $, we have
    \[
        \begin{aligned}
            \liminf_{N\to \infty} 
             \frac{\e_N^*(\tilde A) +1}{N^{1+s/d}} 
             &\geq \liminf_{N\to \infty} \frac{\e_N(\omega_N^*)}{\sum_m \e_{N_m}(\omega_N^* \cap A_m)}\cdot \frac{\sum_m \e_{N_m}(\omega_N^* \cap A_m)}{N^{1+s/d}}\\
             & = \liminf_{N\to \infty} \sum_m \left(\frac{N_m}{N}\right)^{1+s/d}\cdot \frac{\e_{N_m}(\omega_N^* \cap A_m)}{N_m^{1+s/d}}\\
             &\geq    \sum_{m} \beta_m^{1+s/d} \cdot \liminf_{N\to \infty}\frac{\e_{N_m}(\omega_N^* \cap A_m)}{N^{1+s/d}} \\
             &\geq  (1+\epsilon)^{-1} \sum_{m} \beta_m^{1+s/d} \cdot \liminf_{N\to \infty}\frac{\e_{N_m}(\psi_m^{-1}(\omega_N^*) \cap K_m)}{N^{1+s/d}},
    \end{aligned}
    \]
    where the last inequality used that $ \psi_m^{-1}(A_m) = K_m $ with a bi-Lipschitz constant smaller than $ (1+\epsilon)$. Using the asymptotics on subsets of $ \mathbb R^d $ in assumption \ref{it:Rdlimit}, we conclude further
    \begin{equation}
        \label{eq:lower_Atilde}
        \liminf_{N\to \infty} \frac{\mathcal \e_N^*(\tilde A)}{N^{1+s/d}} 
        \geq  (1+\epsilon)^{-1} \sum_{m} \beta_m^{1+s/d} \cdot \frac{C_\e}{\h_d(K_m)^{s/d}} \geq (1+\epsilon)^{-1} \frac{C_\e}{\mathcal H_d(\tilde A)^{s/d}}.
    \end{equation}
    The last inequality in \eqref{eq:lower_Atilde} follows by optimizing over nonnegative $ \{ \beta_m \} $ with $ \sum_m \beta_m =1 $. The minimum is achieved for $ \beta_m = \h_d(K_m)/\sum \h_d(K_m) $, \(1\leq m \leq M\).

    To obtain an upper bound for the asymptotics, we choose $ N_m$  in such a way that $ \sum_m N_m = N $, $ N_m/N \to  \h_d(K_m)/\sum_m\h_d(K_m)  $,  $ N\to \infty $; then an upper bound on $ \e_N^*(\tilde A) $ is produced by taking the union of configurations $ \omega_{N_m}^* \subset A_m $ for which $ \e_N(\omega_{N_m}^*) \leq \e_N^*(A_m) + 1 $. Indeed, by the short-range assumption \ref{it:shortrange}, and the same argument as above applied to $ \bigcup_m \omega_{N_m}^* $,
    \begin{equation}
        \label{eq:upper_Atilde}
        \begin{aligned}
        \limsup_{N\to \infty} \frac{\e_N^*(\tilde A)}{N^{1+s/d}}
        &\leq 
        \sum_{m} \left(\frac{\h_d(K_m)}{\sum \h_d(K_m)}\right)^{1+s/d} \cdot \limsup_{N\to \infty}\frac{\e_{N_m}^*(A_m) }{N^{1+s/d}} \\
        &\leq 
        \sum_{m} \left(\frac{\h_d(K_m)}{\sum \h_d(K_m)}\right)^{1+s/d} \cdot (1+\epsilon)\limsup_{N\to \infty}\frac{\e_{N_m}^*(K_m) }{N^{1+s/d}} \\
        &=(1+\epsilon) \cdot \frac{C_\e}{\left(\sum_m\h_d(K_m)\right)^{s/d}} \leq (1+\epsilon)^{1+s} \cdot \frac{C_\e}{\left(\h_d(\tilde A)\right)^{s/d}}.
        \end{aligned}
    \end{equation}
    Here we used \ref{it:Rdlimit} and that $ \{ \psi_m \} $ are bi-Lipschitz with the constant $ (1+\gamma) \leq (1+\epsilon) $.
    Finally, the substitution of \eqref{eq:lower_Atilde}--\eqref{eq:upper_Atilde} into \eqref{eq:upper_A}--\eqref{eq:lower_A} yields 
    \[
        (1-\epsilon) (1+\epsilon)^{-1} \frac{C_\e}{(\h_d(A))^{s/d}}
        \leq \liminf_{N\to \infty} \frac{\e_N^*(A)}{N^{1+s/d}} \leq \limsup_{N\to \infty} \frac{\e_N^*(A)}{N^{1+s/d}}\leq (1+\epsilon)^{1+s} \frac{C_\e}{(\h_d(A) - \epsilon)^{s/d}},
    \]
    so that  taking $ \epsilon \downarrow 0 $ finishes the proof of the lemma.
\end{proof}
We are now in the position to prove our main theorem for the case with no weight or external field. Note, we have verified that $ E_s^k $ satisfies the properties formulated in Section~\ref{sec:outline}, and thus Lemma~\ref{lem:federer} applies.

\begin{theorem}
    \label{thm:poppy_seed_k}
    Suppose $ A \subset \mathbb R^p $ is a compact $ (\mathcal H_d,d) $-rectifiable set with $ \m_d(A) = \h_d(A) $, $ s>0 $, $ k\geq 1 $. Then
    \begin{equation}
        \label{eq:poppy_seed}
        \lim_{N\to\infty}  \frac{ \mathcal{E}^k_s(A,N)}{N^{1+s/d}} = \frac{C^k_{s,d}}{\mathcal H_d(A)^{s/d}},
    \end{equation}
    where the constant $ C_{s,d}^k $ was introduced in~\eqref{CsdkDef}.
\end{theorem}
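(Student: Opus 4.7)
The plan is to prove the theorem by invoking Lemma~\ref{lem:federer} with the functionals $\e_N(\omega_N) := E_s^k(\omega_N)$; once hypothesis~(4) of that lemma is verified with constant $C_\e = C_{s,d}^k$, the conclusion is exactly~\eqref{eq:poppy_seed}. Three of the four numbered hypotheses are immediate from earlier results: hypothesis~(1), monotonicity, is clear since $A\subset B$ gives $A^N \subset B^N$; hypothesis~(2), short-range, is Lemma~\ref{lem:short_range} together with an easy induction on the number of pieces; and hypothesis~(3), stability, is Lemma~\ref{lem:stable}. Continuity under near-isometries also holds: the $k$-nearest-neighbor sum $\sum_{y\in\nn_k(x;\omega_N)}\|x-y\|^{-s}$ equals the sum of the top-$k$ values of $\|x-z\|^{-s}$ over $z\in\omega_N\setminus\{x\}$, and a $(1+\gamma)$-bi-Lipschitz map $\psi$ scales each such value by a factor in $[(1+\gamma)^{-s},(1+\gamma)^s]$, so the same interval bounds $E_s^k(\psi(\omega_N))/E_s^k(\omega_N)$.

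The main work is the verification of hypothesis~(4): for every compact $K\subset \mathbb R^d$ with $\h_d(K)>0$, $\mathcal E_s^k(K,N)/N^{1+s/d} \to C_{s,d}^k/\h_d(K)^{s/d}$; the case $\h_d(K)=0$ is handled by Lemma~\ref{lem:lower_bound_nonsmooth}. Fix a small $\rho>0$, tile $\mathbb R^d$ by closed cubes of side $\rho$, let $\{q_\alpha\}_{\alpha=1}^{M_K}$ be those meeting $K$, and set $B_\rho := \bigcup_\alpha q_\alpha \supset K$. For the lower bound, given any $\omega_N\subset K$ with $n_\alpha := \#(\omega_N\cap q_\alpha)$, the fact that the $k$ nearest neighbors within a subconfiguration are no closer than within the whole gives $E_s^k(\omega_N) \geq \sum_\alpha \mathcal E_s^k(q_\alpha, n_\alpha)$. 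Applying Lemma~\ref{thm:cube} to the populous cubes, Jensen's inequality, and the estimate $M_K\rho^d \leq \h_d(K_{\rho\sqrt d}) \to \h_d(K)$ as $\rho\to 0$ (valid because $p=d$ and therefore $\m_d(K)=\h_d(K)$) yields $\liminf_{N\to\infty} \mathcal E_s^k(K,N)/N^{1+s/d} \geq C_{s,d}^k/\h_d(K)^{s/d}$. For the upper bound, I first establish $\mathcal E_s^k(B_\rho,N)/N^{1+s/d}\to C_{s,d}^k/\h_d(B_\rho)^{s/d}$: let $B'_\rho$ be the truly disjoint union obtained by slightly shrinking each $q_\alpha$, apply the short-range property (inductively extended to the finite family $\{q'_\alpha\}$) together with Lemma~\ref{thm:cube} to get the asymptotics on $B'_\rho$, and transfer them to $B_\rho$ via monotonicity and one application of Lemma~\ref{lem:stable} with $A=B_\rho$, $D=B'_\rho$. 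Finally, since $\h_d(B_\rho)\to \h_d(K)$ as $\rho\to 0$, Lemma~\ref{lem:stable} applies once more with $A=B_\rho$ and $D=K$, and combined with the just-computed asymptotics on $B_\rho$ gives the matching upper bound on $\limsup_{N\to\infty} \mathcal E_s^k(K,N)/N^{1+s/d}$ after sending $\rho\to 0$ and then $\epsilon\to 0$.

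I expect the main obstacle to be the upper bound in hypothesis~(4), since an arbitrary compact $K\subset \mathbb R^d$ can have empty interior (e.g., a fat Cantor set of positive measure) and therefore cannot be approximated from \emph{inside} by a disjoint union of cubes — ruling out any direct construction of competitors using Lemma~\ref{thm:cube}. The workaround sketched above is to approximate from \emph{outside} by $B_\rho$, where the cube asymptotics and short-range property combine to pin down the asymptotics of $\mathcal E_s^k(B_\rho,N)$, and then to apply stability in the reverse direction — from the ambient set $B_\rho$ down to its subset $K$. The hypothesis $\h_d(A)=\m_d(A)$, automatic when $p=d$, is precisely what guarantees $\h_d(B_\rho)/\h_d(K)\to 1$ and thus makes this final stability step admissible.
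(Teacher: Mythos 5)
Your proposal follows essentially the paper's own route: verify the four properties of Section~\ref{sec:outline} (monotonicity, cube asymptotics, short-range, stability, plus near-isometry continuity), obtain the asymptotics for compact subsets of $\mathbb R^d$ by sandwiching between cube coverings, and lift to $(\h_d,d)$-rectifiable sets in $\mathbb R^p$ via Lemma~\ref{lem:federer}. Your outer cover $B_\rho$, the shrink-to-disjoint-equal-cubes step, and the final stability application with the pair $K\subset B_\rho$ are exactly the paper's construction of the dyadic union $J_\epsilon\supset A$ (the paper likewise handles touching cubes by inner disjoint approximation plus stability). The only real variation is your lower bound for a general compact $K\subset\mathbb R^d$: a direct tiling-plus-Jensen estimate instead of the paper's monotonicity against $J_\epsilon$; this works, provided you assign points lying on shared faces of the closed tiles to a single cube (e.g.\ half-open tiles, so that $E_s^k(\omega_N)\geq\sum_\alpha E_s^k(\omega_N\cap q_\alpha)$ holds without double counting) and use Lemma~\ref{thm:cube} uniformly in the per-cube cardinalities, discarding the $O(M_K n_0)=o(N)$ points in sparsely populated cubes.

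Two points need attention. First, the quantifier order in your last step: the $\delta$ furnished by Lemma~\ref{lem:stable} depends in general on the ambient set, here $B_\rho$, while you must choose $\rho$ \emph{after} $\delta$ so that $\h_d(K)>(1-\delta)\h_d(B_\rho)$. This circularity is resolved only by the clause of Lemma~\ref{lem:stable} asserting that for $p=d$ the parameter $\delta$ is independent of the set; the paper invokes precisely this, and you should too — your appeal to $p=d$ concerns only the convergence $\h_d(B_\rho)\to\h_d(K)$, which is a different matter. Second, Lemma~\ref{lem:federer} assumes $\h_d(A)=\m_d(A)>0$, so your reduction does not cover the case $\h_d(A)=\m_d(A)=0$ allowed by the theorem, in which \eqref{eq:poppy_seed} asserts the limit is infinite; your citation of Lemma~\ref{lem:lower_bound_nonsmooth} handles only compact $K\subset\mathbb R^d$ inside hypothesis (4), not a null rectifiable set in $\mathbb R^p$. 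The paper closes this case with a separate packing argument using $\m_d(A)=0$ (disjoint balls around well-separated near-minimizer points forced into a thin neighborhood of $A$), and some such argument must be added to make your proof complete.
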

This is a special case of asymptotics from Theorem~\ref{thm:k_asympt}, in which $ \rho(x) \equiv 1/\mathcal H_d(A) $ and \(V\equiv 0\). We obtain the result about limiting distribution in the general situation (with weight and external field) below, in Section~\ref{sec:wt_fld}.
\begin{proof}
    This proof uses the approach developed in \cite{paper2} for general short-range interactions.  We proceed by establishing the asymptotics for unions of cubes, then for compact sets in $ \mathbb R^d $ ($p=d$ case), and finally proving that \eqref{eq:poppy_seed} holds for compact $ (\mathcal H_d,d) $-rectifiable sets $A$ via Lemma~\ref{lem:federer}.
    
    Consider the case $ A = \bigcup_1^M Q_m $, a union of equal closed disjoint cubes. Fix a sequence $ \omega_N^* $, for which $ E_s^k(\omega_N^*) < \mathcal E_s^k(A,N)+1$. Passing to a subsequence if necessary, it can be assumed that the following limits exist
    \[
        \beta_m := \lim_{N\to \infty} \frac{N_m }{N}, \qquad 1\leq m \leq M,
    \]
    where we set $ N_m = \# (\omega_N^*\cap Q_m) $.
    On the one hand,
    \[
        \begin{aligned}
            \liminf_{N\to \infty} \frac{\mathcal E_s^k(A,N)+1}{N^{1+s/d}} 
        \geq& 
        \liminf_{N\to \infty} \frac{E_s^k(\omega_N^*)}{N^{1+s/d}}
        =\liminf_{N\to \infty} 
        \frac{E_s^k(\omega_N^*)}{ \sum_{m} E_s^k(\omega_N^*\cap Q_m) } \cdot 
        \frac{ \sum_{m} E_s^k(\omega_N^*\cap Q_m) }{N^{1+s/d}}
        \\
        &\geq \sum_{m} \liminf_{N\to \infty}\frac{\mathcal  E_s^k(Q_m, N_m) }{N^{1+s/d}}
        = \sum_{m} \beta_m^{1+s/d} \liminf_{N\to \infty}\frac{\mathcal E_s^k(Q_1, N_m) }{N_m^{1+s/d}}
        \\
        &\geq \liminf_{N\to \infty}\frac{\mathcal E_s^k(Q_1, N) }{N^{1+s/d}} \cdot \sum_{m} \beta_m^{1+s/d} = \frac{C_{s,d}^k}{\h_d(Q_1)} \cdot \sum_{m} \beta_m^{1+s/d},
    \end{aligned}
    \]
    where we used the short-range property for $ E_s^k $, Corollary~\ref{cor:cube_general}, and that all the cubes $ Q_m $ are equal, so the value of $ \mathcal E_s^k(Q_m, N) $ is independent of $ m $. The minimum of $ \sum_m \beta_m^{1+s/d} $  over nonnegative $ \beta_m $ with $ \sum_m \beta_m =1  $ is obtained for $ \beta_1=\ldots=\beta_M = 1/M $.

    On the other hand, given $ \beta_m = 1/M $,  $ 1\leq m \leq M $, it suffices to place in $ Q_m $ a configuration $ \omega_{N_m}^* $ of cardinality $ N_m = \lceil N/M \rceil $ with $ E_s^k(\omega_{N_m}^*) < \mathcal E_s^k(Q_m,N_m) + 1 $ to obtain
    \[
        \begin{aligned}
            \limsup_{N\to \infty} \frac{\mathcal E_s^k(A,N)}{N^{1+s/d}} 
        \leq& 
        \limsup_{N\to \infty} 
        \frac{ E_s^k\left(\bigcup_{m} \omega_{N_m}^*\right) }{N^{1+s/d}}
        =\limsup_{N\to \infty} 
        \frac{E_s^k\left(\bigcup_{m} \omega_{N_m}^*\right)}{ \sum_{m} E_s^k(\omega_{N_m}^*) } \cdot 
        \frac{ \sum_{m} E_s^k(\omega_{N_m}^*) }{N^{1+s/d}}
        \\
        &\leq \sum_{m} \limsup_{N\to \infty}\frac{E_s^k(\omega_{N_m}^*) }{N^{1+s/d}}
        = \sum_{m} \beta_m^{1+s/d} \limsup_{N\to \infty}\frac{E_s^k(\omega_{N_m}^*) }{N_m^{1+s/d}}
        \\
        &= \limsup_{N\to \infty}\frac{\mathcal E_s^k(Q_1, N) }{N^{1+s/d}}\cdot \sum_{m} \beta_m^{1+s/d}= \frac{C_{s,d}^k}{\h_d(Q_1)} \cdot M^{-s/d}.
    \end{aligned}
    \]
    To summarize, the last two displays prove that asymptotics for the union of $ M $ equal disjoint cubes is $ M^{-s/d} $ times the asymptotics for one such cube, in agreement with~\eqref{eq:poppy_seed}.

    The case of a union of closed equal cubes with disjoint interiors (but not necessarily disjoint themselves) follows as an application of stability and monotonicity, by approximating the cubes of the union from the inside with concentric disjoint equal cubes. We shall omit the details and instead discuss obtaining~\eqref{eq:poppy_seed} for general compact sets from unions of cubes with disjoint interiors. The omitted argument follows the same lines.  

    It suffices to assume $ \h_d(A) > 0 $, since otherwise $ A $ can be covered with a union of equal cubes of arbitrarily small measure, and monotonicity property directly implies that the limit from~\eqref{eq:poppy_seed} is infinite. Now fix an $ \epsilon >0 $.
    For $ \delta=\delta(\epsilon, s,k,p,d) $ as in Lemma~\ref{lem:stable} (note that $ \delta$ is set-independent due to \(p=d\)!), let $ J_\epsilon \supset A $ be a finite union of closed equal dyadic cubes with disjoint interiors, such that $  \h_d(A) > (1 - \delta )\h_d(J_\epsilon) $; then formula \eqref{eq:poppy_seed} applies to $ J_\epsilon $. Without loss of generality, $ \delta \leq \epsilon < 1 $. On the one hand, monotonicity property together with asymptotics on $ J_\epsilon $ give
    \[
        \liminf_{N\to \infty} \frac{\mathcal E_s^k(A,N)}{N^{1+s/d}} \geq \lim_{N\to \infty} \frac{\mathcal E_s^k(J_\epsilon,N)}{N^{1+s/d}} =\frac{C_{s,d}^k}{\h_d(J_\epsilon)^{s/d}} \geq (1-\epsilon)^{{s/d}} \frac{C_{s,d}^k}{\h_d(A)^{s/d}}.
    \]
    In addition, by the choice of $ J_\epsilon $ and the stability property from Lemma~\ref{lem:stable} for the pair of sets $ A\subset J_\epsilon $,
    \[
        (1-\epsilon)\limsup_{N\to \infty} \frac{\mathcal E_s^k(A,N)}{N^{1+s/d}} \leq \lim_{N\to \infty} \frac{\mathcal E_s^k(J_\epsilon,N)}{N^{1+s/d}} =\frac{C_{s,d}^k}{\h_d(J_\epsilon)^{s/d}} \leq\frac{C_{s,d}^k}{\h_d(A)^{s/d}},
    \]
    which completes the proof when $ A $ is a general compact subset of $ \mathbb R^d $.

    To obtain the desired result for $ (\h_d,d) $-rectifiable $ A \subset \mathbb R^p $ with $ \h_d(A) = \m_d(A)  > 0 $, recall Lemmas~\ref{lem:short_range} and~\ref{lem:stable} and apply Lemma~\ref{lem:federer} to the sequence of functionals $ E_s^k $ on \(N\)-point configurations.
    It thus remains to discuss the case $ \h_d(A) = \m_d(A) =0 $. We can argue by contradiction: suppose 
    \[
        \liminf_{N\to\infty}  \frac{ \mathcal{E}^k_s(A,N)}{N^{1+s/d}} = C < \infty,
    \]
    and let $ \{ n : {n \in \mathscr N} \} $ be the subsequence along which the $ \liminf $ is attained. Without loss of generality, $ k=1 $. Let further $ \omega_{n}^* $ be the sequence of configurations with $ E_s^1(\omega_{n}^*) < \mathcal E_s^1(A,n)+1$ for every $ n \in \mathscr N $. It follows that for $ n_0 $ sufficiently large,
    \[
        \sum_{x\in \omega_n} \|x-(x;\omega_{n}^*)_1\|^{-s} \leq 2C n^{1+s/d}, \qquad n \geq n_0,
    \]
    implying that for any $ \gamma > 0 $, the number of elements $ x\in \omega_{n}^* $ such that $ \|x-(x;\omega_{n}^*)_1\| < \gamma n^{-1/d} $  is at most $ 2C \gamma^s n $. Taking $ \gamma $ small enough gives at least $ n(1-2C\gamma^s) $ elements $ x\in\omega_n $ for which $  \|x-(x;\omega_{n}^*)_1\| \geq \gamma n^{-1/d} $. Denote the subconfiguration of such $ x $ by $ \omega_n' \subset \omega_n $. It follows that the balls $ \{ B(x,\, 2^{-1}\gamma n^{-1/d}) : x\in\omega_n' \} $ have disjoint interiors, and so in view of 
    \[
        \bigcup_{x\in \omega_n'} B\left(x,\, 2^{-1}\gamma n^{-1/d}\right) \subset A_{\gamma n^{-1/d}},
    \]
    for any $ \epsilon > 0 $ and all large enough $ n_1=n_1(\epsilon) $, there holds
    \[
        \# \omega_n' v_p(2^{-1}\gamma n^{-1/d})^p \leq  (\m_d(A) + \epsilon) v_{p-d} (\gamma n^{-1/d})^{p-d}, \qquad n \geq n_1.
    \]
    Using that $ \# \omega_n' \geq (1-2C\gamma^s)n $ and $ \m_d(A) = 0 $, from the last display we have 
    \[
        (1-2C\gamma^s) v_p 2^{-p}\gamma^d \leq v_{p-d} \epsilon,
    \]
    which is the desired contradiction when $ \epsilon > 0 $ is sufficiently small. This proves \(\liminf_{N\to\infty}  \frac{ \mathcal{E}^k_s(A,N)}{N^{1+s/d}} = \infty\) for the case $ \h_d(A) = \m_d(A) =0 $.
\end{proof}

\subsection{Adding a multiplicative weight and external field}
\label{sec:wt_fld}
We will now extend the results of the previous section by introducing an external field and a weight, so that the problem at hand is optimization of the functional~\eqref{Edef}. An essential ingredient in the proof is the partitioning of the set $ A $ according to the values of $ V $; similar ideas were used by the authors in \cite{hardinGenerating2017}.  
First, some remarks about the positivity of weight $ w $ and external field $ V $ are in order.
\begin{remark}
    \label{rem:wVpositive}
    Since $ V $ is assumed to be lower semicontinuous and $A$ is compact, it follows that $V$ is bounded below on $ A $ and, by adding a suitable constant, we may assume $ V\geq 0 $.
    Furthermore, by the definition of a CPD-weight $ w $, there exist positive numbers $ \delta $ and $w_0$ such that for any pair $ x,y\in A $ satisfying $ \|x-y\| \leq \delta $ we have $ w(x,y) \geq w_0 $. Let $ \{ B_j \}_1^n $ be a covering of $ A $ with $n=n(\delta)$ balls  of radius $ \delta/2 $.  For $ \omega_N \in A^N $ and any ball $ B_j $ containing at least $ k+1 $ elements from $ \omega_N $, we have $ \|x-y\| \leq \delta $ for  $x\in \omega_N\cap B_j$ and $y\in \nn_k(x,\omega_N)$. On the other hand, there are at most $ nk^2 $   pairs $ x,y $ with $x\in \omega_N\cap B_j$ and $y\in \nn_k(x,\omega_N)$ satisfying $ \|x-y\| > \delta $  since  in such a case  $ x $ must belong to a ball $ B_j $ with at most $ k $ elements.     
    Defining $\widetilde w:=\max \{w,w_0\}$, we then have
$$  \lim_{N\to \infty} \frac{E_s^k(\omega_N,\widetilde w)-E_s^k(\omega_N,  w)}{N^{1+s/d}} =0,$$
 since 
 $$0\le E_s^k(\omega_N,\widetilde w)-E_s^k(\omega_N,  w)\le \sum_{x\in\omega_N}\sum_{\substack{y\in\nn_k(x;\omega_N),\\ \|x-y\| > \delta}} w_0\|x-y\|^{-s}\le nk^2w_0\delta^{-s}.$$
 Hence, for the purpose of asymptotics, it may be assumed  $ w\geq w_0 $. We employ this fact in the following useful proposition. 
 \end{remark}
 
 \begin{proposition}
    \label{prop:abs_cont}
    Let the assumptions of Theorem~\ref{thm:poppy_seed_k} hold and \(w\) be a CPD-weight.
     If $ \omega_N \in A^N $, $ N\geq 1 $, is a sequence of configurations for which
    \[
        \limsup_{N\to \infty} \frac{E_s^k(\omega_N; w)}{N^{1+s/d}} < +\infty,
    \]
    then any cluster point of $ \nu(\omega_N) $ is absolutely continuous with respect to $ \h_d $.
\end{proposition}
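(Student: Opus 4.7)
My plan is to use the bounded-energy hypothesis together with the asymptotic lower bounds from Lemma~\ref{lem:lower_bound_nonsmooth} and Theorem~\ref{thm:poppy_seed_k} to show that any weak-star cluster point $\mu$ of $\nu(\omega_N)$ satisfies a power-law bound $\mu(K)\leq C_*\,\h_d(K)^{s/(d+s)}$ on every compact $K\subset A$; absolute continuity then follows by inner regularity of~$\mu$.

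By Remark~\ref{rem:wVpositive} I may assume $w\geq w_0>0$, which gives $E_s^k(\omega_N)\leq w_0^{-1}E_s^k(\omega_N;w)$. Pass to a subsequence along which $\nu(\omega_N)\weakto\mu$ and set $L:=\limsup_N E_s^k(\omega_N;w)/N^{1+s/d}<\infty$. The central uniform estimate is
\[
\limsup_{N\to\infty}\nu(\omega_N)(F)\leq C_*\,\h_d(F)^{s/(d+s)},\qquad F\subset A\text{ compact},
\]
with $C_*=C_*(s,d,k,w_0,L)$. For its proof, observe that the $k$ nearest neighbors of $x\in\omega_N\cap F$ within $\omega_N$ are no farther than those within $\omega_N\cap F$, so the partial-sum inequality from the proof of Lemma~\ref{lem:short_range} gives
\[
E_s^k(\omega_N\cap F)\leq E_s^k(\omega_N)\leq w_0^{-1}E_s^k(\omega_N;w).
\]
Since $F$ inherits $(\h_d,d)$-rectifiability from $A$, the proof machinery of Theorem~\ref{thm:poppy_seed_k} (via Lemma~\ref{lem:lower_bound_nonsmooth} and Lemma~\ref{lem:federer}) yields the lower asymptotic bound
\[
\liminf_{N_F\to\infty}\frac{\mathcal E_s^k(F,N_F)}{N_F^{1+s/d}}\geq\frac{c(s,d,k)}{\h_d(F)^{s/d}}.
\]
With $N_F:=\#(\omega_N\cap F)$, combining the two inequalities yields $(N_F/N)^{1+s/d}\leq (1+o(1))\,w_0^{-1}Lc^{-1}\h_d(F)^{s/d}$; if $N_F$ stays bounded along the subsequence the estimate is trivial.

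This bound is transferred to $\mu$ via portmanteau applied to the open $r$-neighborhood $U_r$ of any compact $K\subset A$:
\[
\mu(K)\leq\mu(U_r)\leq\liminf_{N\to\infty}\nu(\omega_N)(U_r)\leq\limsup_{N\to\infty}\nu(\omega_N)(\overline{U_r}\cap A)\leq C_*\,\h_d(\overline{U_r}\cap A)^{s/(d+s)}.
\]
Since $\overline{U_r}\cap A\downarrow K$ as $r\downarrow 0$ and $\h_d(A)<\infty$, continuity from above gives $\h_d(\overline{U_r}\cap A)\to\h_d(K)$, so $\mu(K)\leq C_*\h_d(K)^{s/(d+s)}$. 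Thus $\mu$ annihilates every compact $\h_d$-null set, and inner regularity of $\mu$ extends the conclusion to every Borel $\h_d$-null subset of $A$, proving $\mu\ll\h_d$. The principal technical difficulty is the lower-bound step when $p>d$, where Lemma~\ref{lem:lower_bound_nonsmooth} is not directly available and Theorem~\ref{thm:poppy_seed_k} formally requires $\mathcal M_d(F)=\h_d(F)$, which a general closed subset of $A$ need not satisfy; I would handle this by decomposing $F$ into bi-Lipschitz images of compact subsets of $\mathbb R^d$ plus an $\h_d$-null remainder (the Federer lemma used in Lemma~\ref{lem:federer}), applying the $\mathbb R^d$-bound on each piece, and combining by Jensen's inequality, with the optimal allocation of $N_F$ across the pieces reproducing the factor $\h_d(F)^{-s/d}$.
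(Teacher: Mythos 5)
Your reduction to $w\geq w_0$, the chain $E_s^k(\omega_N\cap F)\leq E_s^k(\omega_N)\leq w_0^{-1}E_s^k(\omega_N;w)$, the portmanteau/continuity-from-above transfer to a cluster point, and the inner-regularity conclusion are all sound, and in the case $p=d$ your central estimate $\limsup_N\nu(\omega_N)(F)\leq C_*\,\h_d(F)^{s/(d+s)}$ does follow from Lemma~\ref{lem:lower_bound_nonsmooth} exactly as you describe; there your argument is complete and self-contained, whereas the paper simply reduces to $E_s^k(\omega_N;w)\geq w_0E_s^k(\omega_N)$ and argues as in Lemma~4.9 of \cite{hardinGenerating2017}.

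The gap is the codimension case $p>d$, and it is not a removable technicality. The lower bound you need,
\[
\liminf_{n\to\infty}\frac{\mathcal E_s^k(F,n)}{n^{1+s/d}}\;\geq\;\frac{c}{\h_d(F)^{s/d}}\qquad\text{for every compact }F\subset A,
\]
is false for general compact sets in $\R^p$, even rectifiable ones. For $d=1$, $p=2$, let $T_N$ be a $\sqrt N\times\sqrt N$ planar grid of $N$ points with spacing $1/N$ (so diameter about $N^{-1/2}$), place the clouds $T_N$ at pairwise disjoint locations accumulating at a single point, and let $F$ be the closure of their union. Then $F$ is countable, hence $\h_1(F)=0$ and $F$ is trivially $(\h_1,1)$-rectifiable, yet $\mathcal E_s^k(F,N)\leq E_s^k(T_N)=O(N^{1+s})=O(N^{1+s/d})$. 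This shows that rectifiability of $F$ plus your proposed repair---decompose $F$ into bi-Lipschitz images of compacta in $\R^d$ and an $\h_d$-null remainder, bound each image piece, and optimize with Jensen---cannot close the argument: nothing controls how many of the $N_F$ points lie in the remainder, and, as the example shows, such points can have $k$-nearest-neighbor distances of the optimal order $N_F^{-1/d}$ \emph{inside} the remainder, so their energy is $O(N_F^{1+s/d})$ with a constant unrelated to $\h_d(F)$. What rules this scenario out is not rectifiability but the global hypothesis $\h_d(A)=\m_d(A)$: it prevents $A$ from containing ``Hausdorff-null but Minkowski-thick'' dust, and any correct proof in the case $p>d$ must use it, e.g.\ by comparing Lebesgue volumes of the disjoint $r$-neighborhoods of a compact $K\subset A$ and of $\{x\in A:\dist(x,K)\geq\delta\}$ inside $A_r$ so as to control the Minkowski size of $K$ by the $\h_d$-measure of a slightly larger portion of $A$, and then counting configuration points near $K$ by a packing/volume argument against that Minkowski-type quantity rather than against $\h_d(K)$ directly. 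That is precisely the route of the lemma the paper cites, and it is the ingredient your $p>d$ plan is missing.
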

\begin{proof}
As remarked above, we may assume $w \ge w_0>0$.   
  Then  $E_s^k(\omega_N; w) \geq w_0 E_s^k(\omega_N) $, and we may argue as in \cite[Lemma 4.9]{hardinGenerating2017}.
\end{proof}
\begin{remark}
    \label{rem:offdiagonal}
    The discussion in Remark~\ref{rem:wVpositive} can also be used to estimate the terms in $ E_s^k $ for pairs $ (x,y) $ with a fixed positive separation. Namely, by property (c) of CPD-weight, for every $ \delta > 0 $ there exists a constant $ M_\delta $ such that $ \|x-y\| \geq \delta $ implies $ 0\leq w(x,y) \leq M_\delta $. Using a covering of $ A $ with $ n=n(\delta) $ balls of radius $ \delta/2 $ in the same way as in Remark~\ref{rem:wVpositive}, we conclude 
    \begin{equation*}
        0 \leq \sum_{x\in\omega_N}\sum_{\substack{y\in\nn_k(x;\omega_N),\\ \|x-y\| > \delta}} w(x,y)\|x-y\|^{-s} \leq \sum_{x\in\omega_N}\sum_{\substack{y\in\nn_k(x;\omega_N),\\ \|x-y\| > \delta}} M_\delta\|x-y\|^{-s}\le nk^2M_\delta\delta^{-s}.
    \end{equation*}
\end{remark}
Using the above observations we further derive an analog of the short-range property~\eqref{eq:short_range} for weighted interactions. For the purposes of the proof of the main theorem, it will be enough to establish an inequality corresponding to the local behavior of asymptotically optimal configurations. For the rest of the section, let
\begin{equation}
    \label{eq:U_defined}
    \U(\omega_N; S) := \sum_{\substack{(x,y)\in \Lambda_k(\omega_N),\\ x\in S}} \left(w(x,y)\|x-y\|^{-s} +N^{s/d} V(x)\right)
\end{equation}
be the sum of terms in  $ E_s^k(\omega_N; w,V) $ corresponding to edges of $ \Lambda_k(\omega_N) $, originating from the entries $ x\in \omega_N\cap S $ with $ S\subset A $. Notice that as a function of  set $ S $, $\U(\omega_N; S)$ is a positive measure. Thus, for any sequence of configurations $ \omega_N $, $N\geq 1,$ with $ \limsup_{N} \U(\omega_N; A) /N^{1+s/d} < \infty $, up to passing to a subsequence there exists a weak$ ^* $ limit of measures $ \U(\omega_N; \cdot\,) /N^{1+s/d} $.
\begin{lemma}
    \label{lem:wt_short_range}
    Let the assumptions of Theorem~\ref{thm:k_asympt} be satisfied, with \(w(x,x)+V(x)\) finite on a subset of \(A\) of positive \(\mathcal H_d\)-measure.

    Suppose that $ \{\omega_N^*\}_1^\infty $ is a sequence of $(k,s,w,V)$-asymptotically optimal configurations on $ A $, with
    \[
        \begin{aligned}
            \frac{\U(\omega_N^*;\ \cdot)}{N^{1+s/d}} &\weakto \lambda\\ 
            \nu(\omega_N^*) &\weakto \mu,
        \end{aligned}
    \]
    for a finite measure $ \lambda $ and a probability measure $ \mu $.
    Let $ x_1, x_2 \in \supp \mu $ and $ B_m:= B(x_m, r_m) $, $ m=1,2 $, be disjoint and such that $ \h_d(\partial B_m \cap A) =0 $, $ m=1,2 $. In addition, let $ w(x,y) $ be bounded when $ x,y \in B(x_m, 3r_m) $.

    Then for any compact $ S_m\subset A\cap B_m $, $ m=1,2 $, and $ B = B_1\cup B_2 $, 
    \begin{equation}
        \label{eq:lem_wt_short}
        \limsup_{N\to \infty} \frac{\U(\omega_N^*; B)}{N^{1+s/d}} 
        \leq \min_{\alpha_1 + \alpha_2 =\mu(B) } \sum_{m=1,2}\left( \alpha_m^{1+s/d} \c^k \frac{\sup_{x,y\in S_m} w(x,y)}{\h_d(S_m)^{s/d}} + \alpha_m\sup_{x\in S_m} V(x)\right).
   \end{equation}
\end{lemma}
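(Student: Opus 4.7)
The plan is to bound $\U(\omega_N^*;B)$ by testing $\omega_N^*$ against a family of competitor configurations $\tilde\omega_N$ which agree with $\omega_N^*$ outside $B$ but are optimally redistributed within $S_1\cup S_2$. Decomposing $E_s^k(\cdot\,;w,V)=\U(\cdot\,;B)+\U(\cdot\,;A\setminus B)$ on both sides of $E_s^k(\omega_N^*;w,V)\le E_s^k(\tilde\omega_N;w,V)+o(N^{1+s/d})$ (which holds by asymptotic optimality, since $E_s^k(\omega_N^*;w,V)=O(N^{1+s/d})$) yields
\[
\U(\omega_N^*;B) \le \U(\tilde\omega_N;B) + \bigl[\U(\tilde\omega_N;A\setminus B)-\U(\omega_N^*;A\setminus B)\bigr] + o(N^{1+s/d}).
\]
The goal is to show that the bracketed discrepancy is $o(N^{1+s/d})$, while $\U(\tilde\omega_N;B)/N^{1+s/d}$ tends to the summand appearing in~\eqref{eq:lem_wt_short}.

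Set $N_m := \#(\omega_N^*\cap B_m)$. After shifting $V\ge 0$ as in Remark~\ref{rem:wVpositive}, Proposition~\ref{prop:abs_cont} forces $\mu\ll\h_d$; combined with $\h_d(\partial B_m\cap A)=0$ this gives $\mu(\partial B_m)=0$, so weak-$*$ convergence yields $N_m/N\to\mu(B_m)$. Given nonnegative $\alpha_1,\alpha_2$ with $\alpha_1+\alpha_2=\mu(B)$, choose integers $N_m'$ satisfying $N_m'/N\to\alpha_m$ and $N_1'+N_2'=N_1+N_2$, and let $\sigma_{N_m'}\subset S_m$ be a near-minimizer of the unweighted $E_s^k$ on $S_m$; Theorem~\ref{thm:poppy_seed_k} then gives $E_s^k(\sigma_{N_m'}) = C_{s,d}^k(N_m')^{1+s/d}/\h_d(S_m)^{s/d}+o(N^{1+s/d})$. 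Define $\tilde\omega_N$ by excising $\omega_N^*\cap B$ from $\omega_N^*$ and inserting $\sigma_{N_1'}\cup\sigma_{N_2'}$. Retracting $S_m$ slightly inside $B_m$ if necessary, the $O((N_m')^{-1/d})$ separation in $\sigma_{N_m'}$ (Corollary~\ref{cor:almost_minimizer_separation}) ensures that for large $N$ the $k$ nearest neighbors in $\tilde\omega_N$ of each $x\in\sigma_{N_m'}$ lie entirely within $\sigma_{N_m'}$, and so
\[
\U(\tilde\omega_N;B_m) \le \Bigl(\sup_{x,y\in S_m} w(x,y)\Bigr)E_s^k(\sigma_{N_m'}) + N^{s/d} N_m'\sup_{x\in S_m}V(x).
\]
Dividing by $N^{1+s/d}$ and passing to the limit in $N$ reproduces the $m$-th summand of~\eqref{eq:lem_wt_short}.

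The main obstacle is controlling the remaining discrepancy $\U(\tilde\omega_N;A\setminus B)-\U(\omega_N^*;A\setminus B)$. External-field contributions cancel exactly, since the vertex set outside $B$ is unchanged. For kernel terms, only vertices $x\in\omega_N^*\cap(A\setminus B)$ whose $k$-nearest neighbors in one of $\omega_N^*$, $\tilde\omega_N$ include a point of $B$ can contribute; Theorem~\ref{thm:separation} forces such $x$ to lie within $CN^{-1/d}$ of $\partial B$, so a standard separation/volume count limits their number to $O(N^{(d-1)/d})$. The local boundedness of $w$ on $B(x_m,3r_m)$ (and property (c) of the CPD-weight for pairs bridging $B_1$ and $B_2$, which are kept a fixed positive distance apart), together with Lemma~\ref{lem:few_nns} bounding the incoming degrees at boundary vertices, ensures that each such $x$ contributes at most $O(N^{s/d})$, for a total of $O(N^{1+(s-1)/d})=o(N^{1+s/d})$. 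Infimizing over admissible splits $\alpha_1+\alpha_2=\mu(B)$ then delivers~\eqref{eq:lem_wt_short}.
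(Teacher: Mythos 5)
Your overall architecture matches the paper's: build a competitor $\tilde\omega_N$ that keeps $\omega_N^*$ outside $B$ and replaces $\omega_N^*\cap B$ by near-minimizers of the unweighted energy in slightly retracted copies of $S_m$, compare via asymptotic optimality, and evaluate $\U(\tilde\omega_N;B)$ with Theorem~\ref{thm:poppy_seed_k}. The gap is in the step you yourself identify as the main obstacle: controlling $\U(\tilde\omega_N;A\setminus B)-\U(\omega_N^*;A\setminus B)$. First, Theorem~\ref{thm:separation} is not available for $\omega_N^*$: it requires a \emph{marginally radial} PD-weight, whereas here $w$ is only a CPD-weight, and it requires an energy defect $O(N^{s/d})$, whereas $(k,s,w,V)$-asymptotic optimality only yields a defect $o(N^{1+s/d})$. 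Second, even granting separation $\Delta(\omega_N^*)\geq CN^{-1/d}$, that is a \emph{lower} bound on distances and does not force a vertex $x\notin B$ having a $k$-nearest neighbor in $B$ to lie within $CN^{-1/d}$ of $\partial B$; that would require an \emph{upper} (covering-type) bound on $k$-th nearest-neighbor distances, which is exactly what fails for weighted energies with an external field (Corollary~\ref{cor:covering} is proved only for $w\equiv1$, $V\equiv0$, and the paper notes covering is not guaranteed beyond the $L_1$-sublevel set of $V$). Consequently your count $O(N^{(d-1)/d})$ of affected vertices is unjustified: nothing prevents a positive fraction of the points (or of the energy) of $\omega_N^*$ from sitting in a thin shell around $\partial B$, in which case the crossing-edge contribution is of order $N^{1+s/d}$ times the mass of that shell, not $o(N^{1+s/d})$, and no counting argument alone can rule this out. (A smaller instance of the same confusion appears in your bound for $\U(\tilde\omega_N;B_m)$: to keep the $k$ nearest neighbors of the inserted points inside their own cluster you need the covering estimate of Corollary~\ref{cor:covering} for the unweighted near-minimizers $\sigma_{N_m'}$, not their separation; this one is fixable.)

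This is precisely why the lemma hypothesizes the weak-star limit $\U(\omega_N^*;\,\cdot\,)/N^{1+s/d}\weakto\lambda$, which your proof never uses. The paper pairs each removed edge $(x,y)$ with $x\notin B$, $y\in B$, with its replacement $(x,y')$, observes $\|x-y'\|\geq\|x-y\|$ (or else $y'$ lies in the retracted region, hence at fixed positive distance from $x$, so such terms are negligible by Remark~\ref{rem:offdiagonal}), and uses $w\geq w_0$ (Remark~\ref{rem:wVpositive}) together with the boundedness of $w$ on $B(x_m,3r_m)$ to dominate each new term by $(M_w/w_0)$ times the corresponding old term; the old terms with $\|x-y'\|\leq r$ have starting points in $B_r\setminus B$, so their total is at most a constant times $\lambda(B_r\setminus B)\,N^{1+s/d}$ for large $N$, which vanishes as $r\downarrow0$ because $\lambda$ is finite and $B$ is closed. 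Some measure-theoretic control of this kind is needed to close your argument; as written, the estimate ``$O(N^{(d-1)/d})$ affected vertices, each contributing $O(N^{s/d})$'' does not follow from the results you cite, and the conclusion that the discrepancy is $o(N^{1+s/d})$ is not established.
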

\begin{proof}
    Notice that, due to \(w(x,x)+V(x)\) being finite on a subset of positive measure, 
    $$\limsup_{N\to \infty}  {\mathcal E_s^k(A,N;w,V)}/{N^{1+s/d}} < \infty,$$
    so by Proposition~\ref{prop:abs_cont} we have $\mu \ll \h_d$. Subsequently, $ \mu(\partial B_m \cap A) =0 $, \(m=1,2\).
    We shall present a sequence of configurations $ \omega_N' $ for which the right-hand side in~\eqref{eq:lem_wt_short} corresponds to \(\U(\omega_N'; B)\). The inequality will then follow by the asymptotic optimality of the sequence \( \omega_N^*\). 

    Fix $ \gamma \in (0,1) $ and $\alpha_1, \alpha_2\ge 0$ such that $\alpha_1 + \alpha_2 =\mu(B)$.  
    We further denote $ \gamma B_m := B(x_m, \gamma r_m) $ and $\gamma B := \gamma B_1 \cup \gamma B_2 $.  
    Let $n=\#(\omega_N^*\cap B)$, $ n_1 := \min (\lfloor\alpha_1 N\rfloor, n) $, and $ n_2:= n - n_1 $ and note that  $n_m/N\to \alpha_m$ as $N\to\infty$ for $m=1,2$, since $n/N\to \mu(B)$ by weak* convergence and the equality $ \mu(\partial B \cap A) =0 $.  
    Choosing an $n_m$-point configuration $ \omega_{n_m} $ in $ S_m\cap \gamma B_m $, such that $ E_s^k(\omega_{n_m}; w,V) \leq \mathcal E_s^k(S_m \cap \gamma B_m, n_m; w,V) + 1 $ for $m=1,2$, let $\omega_n=\omega_{n_1}\cup
    \omega_{n_2}$ and define $  \omega_N' $ as
    \begin{equation}
        \label{eq:tilde_omega_def}
         \omega_N' := \omega_n \cup (\omega_N^*\setminus B).
    \end{equation}
    By \eqref{eq:U_defined},
    \begin{equation}
        \label{eq:decomposition}
        \begin{aligned}
            E_s^k(\omega_N^*; w, V) &= \U(\omega_N^*, B) + \U(\omega_N^*, A\setminus B),\\
            E_s^k(\omega_N'; w, V)  &= \U(\omega_N', B) + \U(\omega_N', A\setminus B).
    \end{aligned}
    \end{equation}
    To obtain inequality~\eqref{eq:lem_wt_short}, we need to establish the reverse inequality between
    the asymptotics of $ \U(\omega_N^*, A\setminus B) $ and that of $ \U(\omega_N', A\setminus B) $. By~\eqref{eq:tilde_omega_def}, these sums differ only by the terms corresponding to pairs $ (x,y) \in \Lambda_k(\omega_N^*)$ with $ x\in \omega_N^*\setminus B $ and $ y\in \omega_N^*\cap B $, which  are replaced in $\Lambda_k(\omega_N')$ by pairs $ (x,y') $, where $ y'\neq y $.  
    That is, we have
    \begin{equation}
        \label{eq:decompose_diff}
        \U(\omega_N^*, A\setminus B) - \U( \omega_N', A\setminus B) =
        \sum_{\substack{x\in \omega_N^*\setminus B,\ y\in B, \\ (x,y)\in \Lambda_k(\omega_N^*),\  (x,y')\in \Lambda_k(\omega_N') }} 
        \left(
            \frac{w(x , y ) }{\| x - y \|^{s}} - \frac{w(x , y' ) }{\| x - y' \|^{s}} 
        \right).
    \end{equation}
    Here \(y'\) is the new nearest neighbor, acquired by \(x\) instead of \(y\). Our goal is to estimate the asymptotics of this difference from below. Denote the sets of new and old edges $ (x,y) $ and $ (x,y') $ in the right-hand side by $ \Lambda_N \subset\Lambda_k(\omega_N) $ and $ \Lambda_N' \subset\Lambda_k(\omega_N') $, respectively.
    Note that $ \|x-y'\| \geq \|x-y\| $ holds for every pair of corresponding edges $ (x,y) $ and $ (x,y') $ (unless $ y'\in \gamma B $, in which case distances $\|x-y'\|$ are uniformly bounded below, and do not contribute to the asymptotics).

    For every fixed $ r> 0 $ we can further decompose the sum in the right-hand side of \eqref{eq:decompose_diff} depending on whether $ \| x- y'\| > r  $ :
    \[
       \Sigma_1 + \Sigma_2 := \left(
            \sum_{\substack{(x,y)\in \Lambda_N,\  (x,y')\in \Lambda_N'\\ \|x-y'\| > r }}
            + 
            \sum_{\substack{(x,y)\in \Lambda_N,\  (x,y')\in \Lambda_N'\\ \|x-y'\| \leq r }}
        \right) 
        \left(
            \frac{w(x , y ) }{\| x - y \|^{s}} - \frac{w(x , y' ) }{\| x - y' \|^{s}} 
        \right).
    \]
    In the sum \(\Sigma_1\), points \(x\) and \(y'\) are positively separated, so by Remark~\ref{rem:offdiagonal} its negative terms do not contribute to the asymptotics, and $ \liminf_{N\to \infty}\Sigma_1/N^{1+s/d} \geq 0 $.  

    To estimate the second sum, let $ r < \min(r_1, r_2) $. Since $ y\in B $ and inequalities $ r\geq  \| x - y'\| \geq \| x - y\| $ hold for pairs of corresponding edges $(x,y) \in \Lambda_N $ and $(x, y')\in \Lambda_N'$, for each term of \(\Sigma_2\) we have $ y' \in B(x_m, 3r_m) $ whenever $ y \in B(x_m, r_m) $, $ m=1,2 $.
    Because $ w(x,y) $ was assumed bounded for all $ x,y \in B(x_m, 3r_m) $ by some constant $ M_w > 0 $,  it follows that for the $ w_0 $ from Remark~\ref{rem:wVpositive},
    \begin{equation}
        \label{eq:compare_omega_prime}
        \frac{w(x , y' ) }{\| x - y' \|^{s}}
        \bigg/
        \frac{w(x , y ) }{\| x - y \|^{s}}
        \leq \frac{M_w}{w_0}\qquad  \text{when } (x,y)\in \Lambda_N,\  (x,y')\in \Lambda_N', \text{ and } \|x-y'\|\leq r.
    \end{equation}
    Finally, we can estimate the negative terms $ - {w(x , y' ) }/{\| x - y' \|^{s}} $ in $ \Sigma_2 $. By~\eqref{eq:compare_omega_prime}, each is at most a constant multiple of the corresponding positive term $ {w(x , y ) }/{\| x - y \|^{s}} $. In the latter, $ r \geq \|x-y\|  $, $ y\in B $, and so every starting point $ x \in (B_r\setminus B) $.
    Using the definition of measure $ \lambda $ and \eqref{eq:compare_omega_prime},  we conclude for $ N $ large enough,
    \[
        \frac{\Sigma_2}{N^{1+s/d}} \geq  -\frac{M_w}{w_0}  
        \sum_{\substack{(x,y)\in \Lambda_k(\omega_N^*)\\ x\in (B_r\setminus B) }} \frac{w(x , y ) }{\| x - y \|^{s}} \geq -\frac{2M_w}{w_0}\lambda(B_r\setminus B).
    \]
    Observe that because $ \lambda $ is finite, $ \lim_{r\downarrow0} \lambda(B_r\setminus B) = \lambda(B\setminus B) = 0 $, and the right-hand side can be made smaller than any given $ \epsilon > 0 $. It follows that in \eqref{eq:decompose_diff},
    \[
        \liminf_{N\to \infty} \frac{\U(\omega_N^*, A\setminus B) - \U( \omega_N', A\setminus B)}{N^{1+s/d}} \geq 0.
    \]
    Combined with equation~\eqref{eq:decomposition}, asymptotic optimality of $ \omega_N^* $ now implies
    \[ 
        \limsup_{N\to \infty}\frac{\U(\omega_N^*, B)}{N^{1+s/d}}\leq \liminf_{N\to \infty}\frac{\U(\omega_N', B)}{N^{1+s/d}}.
    \]
    Writing $ S^\gamma_m :=S_m\cap \gamma B_m $, by the separation between $ \gamma B $ and sets $ A\setminus B $, and $ B_m $ being disjoint, we infer from the above inequality and Theorem~\ref{thm:poppy_seed_k}: 
    \[ 
        \begin{aligned}
            \limsup_{N\to \infty} \frac{\U(\omega_N^*; B)}{N^{1+s/d}} 
            &\leq  \liminf_{N\to \infty} \sum_{m=1,2} \left(\frac {n_m} N\right)^{1+s/d} \frac{E_s^k(\omega_{n_m}; w,V)}{n_m^{1+s/d}} \\
            &= \sum_{m=1,2} \alpha_m^{1+s/d} \, \frac{\sup_{x,y\in S^\gamma_m} w(x,y)}{\h_d(S^\gamma_m)^{s/d}} + \alpha_m \sup_{x\in S_m^\gamma} V(x).
    \end{aligned}
    \]
    Taking $ \gamma\uparrow 1 $ gives~\eqref{eq:lem_wt_short}, since $ \h_d(\partial B \cap A) =0 $.
\end{proof} 
Before proving Theorem~\ref{thm:k_asympt}, let us discuss an alternative form of condition (a) in the definition of a CPD weight. It transpires from the proofs of Lemma~\ref{lem:wt_short_range} and Theorem~\ref{thm:k_asympt} that in place of $ \h_d $-a.e.\ continuity in condition (a) one can assume
\begin{itemize}
    \item[(a$ ' $)]
        $w$ is bounded on $ A\times A $, lower semi-continuous (as a function on $A\times A$) at $ \h_d $-a.e.\ point of the diagonal $\diag(A):=\{(x,x) : x\in A\}$, and such that for $ \h_d $-a.e. $ x\in A $ and any $ \epsilon > 0 $ there
        is an $ r_x' > 0 $ such that for every $ r $, $0< r < r_x' $, there exists a closed set $ A_{x,r}\subset A\cap B(x,r) $ for which $ \h_d(A_{x,r}) \geq (1-\epsilon) \h_d [A\cap B(x,r)] $ and
        \begin{equation*}
            w(y,z) \leq w(x,x)+\epsilon, \qquad y,z \in A_{x,r}.
        \end{equation*} 
\end{itemize} 
In particular, condition (a$'$) holds if $ w $ is symmetric and lower semicontinuous on $ A\times A $ and $ \h_d $-a.e.\ point of $ \diag (A) $ is a Lebesgue point for $ w(x,y) $ with respect to the measure $ \h_d\otimes \h_d $ on $A\times A$.   Another version of (a), not requiring boundedness on the diagonal, is as follows.
\begin{itemize}
    \item[(a$ '' $)]
        $w$ is a marginally radial weight, lower semi-continuous at $ \h_d $-a.e.\ $ x\in \diag(A)$, and such that for $ \h_d $-a.e. $ x\in A $ and any $ \epsilon > 0 $ there is an $ r_x' > 0 $ and a closed $ A_{x,r} \subset A\cap B(x,r) $, $ 0 < r < r_x' $, as in property (a$ ' $).
\end{itemize} 

\begin{proof}[Proof of Theorem~\ref{thm:k_asympt}]
    Let the assumptions of Theorem~\ref{thm:k_asympt} hold. In view of Remark~\ref{rem:wVpositive}, we hereafter let $V\geq 0$ and suppose there is a $w_0>0$ such that $w\geq w_0$ on $A\times A$.

    If $ \h_d(A) = 0 $,  it follows from the latter assumption  and Theorem~\ref{thm:poppy_seed_k} that  
    $$ \lim_{N\to \infty}  \frac{\mathcal E_s^k(A,N;w,V)}{N^{1+s/d}} = +\infty, $$ and so there is nothing to prove.
    Now let $ \h_d(A) > 0 $ and $ w(x,x) + V(x) < \infty $ on a closed subset of $A$ of positive $ \h_d $-measure.  Minimizing $ E_s^k $ on this subset gives an upper bound on $ \mathcal E_s^k(A,N;w,V) $, implying that 
    \begin{equation}
        \label{eq:asymp_is_finite}
        \limsup_{N\to \infty}  \frac{\mathcal E_s^k(A,N;w,V)}{N^{1+s/d}} < \infty.
    \end{equation}
    Fix $ 0<\epsilon <w_0/3 $. 
    For $r>0$, denote by $ B_A(x,r) : = A\cap B(x,r) $ the ball of radius $ r $ relative to $ A $, centered at a point $ x $.

    By the (semi)continuity properties of $ V $ and $ w $, for almost every $ x\in A $ and a sufficiently small $ r_x^{(1)} $,  if $ y,z \in B_A(x,r_x^{(1)}) $ then (when e.g.\ $ w(x,x) = + \infty $, apply the appropriate modifications) 
    \begin{equation}
        \label{eq:lower_semi}
        |w(y,z) - w(x,x)| \leq \epsilon \qquad V(y) \geq V(x)-\epsilon.
    \end{equation}
    Furthermore, the set $ A $ can be partitioned according to the values of $ V $ into the subsets 
    \[
        \begin{aligned}
            A_{l} &:= \left\{x\in A: l\epsilon \leq V(x) < (l+1)\epsilon \right\}, \qquad 0\leq l \leq M-1,\\
            A_{M} &:= \left\{x\in A: M\epsilon \leq V(x)\right\},
    \end{aligned}
    \]
    with $ M $ chosen so that $ \h_d(A_{M}) < \epsilon $. Thus $ A = \bigsqcup_0^M A_l $.

    Applying the Lebesgue density theorem \cite[Corollary 2.14]{mattila1995geometry} to each $ A_{l} $ gives that for  $ \h_d $-almost every $ x\in A_{l} $ there exists some $ r_x^{(2)} > 0 $ such that for every $ r < r_x^{(2)} $,
    \[
        \h_d[A_{l} \cap B(x,r)] \geq (1-\epsilon) \h_d[B_A(x,r)],
    \]
    implying, since every $ x \in A $ is in exactly one $ A_l $, that for $ \h_d $-a.e.\ $ x \in A $ and $ r < r_x^{(2)} $:
    \begin{equation*}
        \h_d(\{ y \in B_A(x,r): V(y) \leq V(x) + \epsilon \}) \geq (1-\epsilon) \h_d[B_A(x,r)].
    \end{equation*}
    Thus for $ \h_d $-a.e.\ $ x \in A $ and $ r < \min( r_x^{(1)}, r_x^{(2)} ) $, there is a closed set $ A_{x,r} \subset B_A(x,r) $ satisfying $ \h_d[A_{x,r}] \geq (1-\epsilon) \h_d[B_A(x,r)] $ and
    \begin{equation}
        \label{eq:upper_semi}
        |w(y,z) - w(x,x)|\leq \epsilon \qquad V(y) \leq V(x)+\epsilon, \qquad y,z\in A_{x,r}.
    \end{equation}

    Let $ \omega_N^* \in A^N $, $N\geq 1$,  be a $(k,s,w,V)$-asymptotically optimal sequence and 
    let $ \mu $ and $ \lambda $ denote some cluster points of the sequences of measures $\nu (\omega_N^*)$ and $ \U(\omega_N^*; \ \cdot)/N^{1+s/d} $, respectively. The latter exists by \eqref{eq:asymp_is_finite}. Also by \eqref{eq:asymp_is_finite}, assumption $ w\geq w_0 $, and Proposition~\ref{prop:abs_cont}, it follows that $ \mu \ll \h_d $. In addition, 
    both $ \mu $ and $ \mathcal H_d $ are Radon measures since $ A $ is a complete metric space \cite[Theorem 7.1.7]{bogachevMeasure2007}.
    The differentiation theorem for Radon measures \cite[Theorem 2.12]{mattila1995geometry} implies that for $ \h_d $-a.e. $ x \in A $ there exists an $ r_x^{(3)} > 0 $ such that whenever $ r < r_x^{(3)} $, we have
    \begin{equation}
        \label{eq:centers}
        \begin{aligned}
        \left|\frac{\mu[B(x,r)]}{\h_d[B_A(x,r)]} - \frac{d\mu}{d\h_d}(x)\right| < \epsilon, \qquad
        & \text{and}\\
        1- \epsilon < \frac{\mu[B(x,r)]}{\h_d[B_A(x,r)]} \bigg/ \frac{d\mu}{d\h_d}(x) < 1+ \epsilon, \qquad 
        &\frac{d\mu}{d\h_d}(x) > 0.
        \end{aligned}
    \end{equation}

    Setting for $ \h_d $-a.e. $ x\in A $  the quantity $ r_x := \min \{ r_x^{(1)}, r_x^{(2)}, r_x^{(3)}\}/2 $, it follows that the properties  \eqref{eq:lower_semi}--\eqref{eq:centers} hold for $ \h_d $-a.e.\ $ x\in A $ and  closed balls $ B(x,r) $ of radius $ r< 2r_x $. Denote the set of such $ x $  by $ \tilde A $.

    In the next part of the proof we shall derive two-sided estimates for the asymptotics of $ \U(\omega_N^*; \ \cdot ) $ on sequences of balls $ B_j^{(m)} $, $ m=1,2 $, $ j\geq 1$, shrinking to a pair of fixed points $ x_1, x_2 \in \tilde A $. This will allow to derive estimates for the densities $ d\mu/d\h_d(x_m) $, $ m=1,2 $.
    Fix a pair of elements $ x_1\neq x_2 \in \tilde A \cap \supp \mu $. We will consider two sequences of balls relative to $ A $: 
    $$B_j^{(m)} = B_A(x_m,r_j^{(m)}),\qquad  m=1,2,\ j\geq 1, $$
    with vanishing radii $ r_j^{(m)}\downarrow0 $. Without loss of generality, $ r_j^{(m)} \leq \min(r_{x_1}, r_{x_2}) $ and $ B_j^{(m)} $ are positive distance apart.
    Because $ \h_d(A) <\infty $, the sequences of radii $ r_j^{(m)} $ can further be chosen to satisfy $ \h_d(\partial B_j^{(m)} \cap A) = 0$, $ m=1,2 $, since finiteness of $ \h_d(A) $ implies, at most a countable number of possible $ r_j^{(m)} $ have a positive value of $ \h_d(\partial{B_j \cap A}) $. Likewise, we chose $ r_j^{(m)} $ so that $ \lambda(\partial B_j^{(m)} \cap A) = 0 $.

    The absolute continuity of $\mu$ with respect to $\h_d$ then implies $ \mu(\partial B_j^{(m)} \cap A) = 0$, $ j\geq 1 $, $ m=1,2 $. By the weak-star convergence of $\nu (\omega_N^* )$ to $\mu$, the limits below exist:
    \begin{equation}\label{weakBj}
        \lim_{N\to \infty} \nu (\omega_N^*)[B_j^{(m)}] =\mu(B_j^{(m)}) =:\beta_j^{(m)}, \qquad m=1,2, \ j\geq 1. 
    \end{equation}
    We shall further estimate the asymptotics of $ \U(\omega_N^* ; B_j)/N^{1+s/d} $, for the set $ B_j := B_j^{(1)}\cup B_j^{(2)} $.
    With  $ w_m := w(x_m, x_m) $, $ V_m := V(x_m) $,
    remark that  for $x,y\in 2B_j^{(m)}$ (with $2B_j^{(m)} $ being the concentric relative ball of double radius), we have
    $  w(x,y)\geq w_m-\epsilon $ and  $ V(x)\ge V_m -\epsilon$. 
    Observe that by Remark~\eqref{rem:offdiagonal}, for $ j $ fixed,
    $$
    \Sigma^{(m)}_j := \sum_{x\in \omega_N^*\cap B_j^{(m)} } \sum_{y\in \nn_k(x;\omega_N^*)\setminus 2B_j^{(m)} } w(x,y ) \|x-y \|^{-s} = o(N^{1+s/d}), \qquad m=1,2.
    $$ 
    By dividing the edges $ (x,y) $ in $ \U(\omega_N^*; B_j) $ according to whether $ y\in 2B_j^{(m)} $ and using the previous display, we obtain:
    \begin{equation*}
        \begin{aligned}
            &\liminf_{N\to \infty} \frac{\U(\omega_N^*; B_j)}{N^{1+s/d}} 
            \geq \liminf_{N\to \infty} \frac{\U(\omega_N^*;w_m-\epsilon,V_m-\epsilon;\ B_j) - \Sigma^{(m)}_j}{N^{1+s/d}}
            \\
            &\quad \geq  \sum_{m=1,2} \liminf_{N\to \infty}                 \frac{E_s^k(\omega_N^* \cap {B_j^{(m)} }; w_m-\epsilon,V_m-\epsilon)}{N^{1+s/d}}, 
        \end{aligned}
    \end{equation*}
    where the first inequality estimates the weight and external field in $ \U $ by constants from below, using \eqref{eq:lower_semi}--\eqref{eq:centers}, and the second inequality is due to the distance to nearest neighbors not decreasing when passing from a configuration to its subconfiguration.
    Using Theorem~\ref{thm:poppy_seed_k} and \eqref{weakBj}, we  deduce
    \begin{equation}
        \label{eq:lower}   
        \begin{aligned}       
            \liminf_{N\to \infty}  \frac{\U(\omega_N^*; B_j)}{N^{1+s/d}}         
            & \geq \sum_{m=1,2} \liminf_{N\to \infty} \frac{E_s^k(\omega_N^* \cap {B_j^{(m)}}; w_m-\epsilon,V_m-\epsilon)}{N^{1+s/d}}\\
            & \geq \sum_{m=1,2} \liminf_{N\to \infty} \frac{E_s^k(\omega_N^* \cap {B_j^{(m)}}; w_m-\epsilon,V_m-\epsilon)}{\#(\omega_N^* \cap {B_j^{(m)})^{1+s/d}}}\left(\frac{\#(\omega_N^* \cap {B_j^{(m)}})}{N}\right)^{1+s/d}\\
            & \geq \sum_{m=1,2} 
            \left(
                \beta^{(m)}_j ( w_m-\epsilon )\cdot C_{s,d}^k\left(\frac{{ \beta^{(m)}_j} }{\h_d(B_j^{(m)})}\right)^{s/d} + \beta^{(m)}_j(V_m-\epsilon) 
            \right)\\
            & \geq \sum_{m=1,2} 
            \left(
                \beta^{(m)}_j ( w_m-\epsilon )\cdot C_{s,d}^k \left(\frac{d\mu}{d\h_d}(x_m) - \epsilon\right)^{s/d}  + \beta^{(m)}_j(V_m-\epsilon) 
            \right).
        \end{aligned}
    \end{equation}

    From Lemma~\ref{lem:wt_short_range}, we further have an upper estimate on the asymptotics of $ \U(\omega_N^*; B_j) $. 
    Recall that, by \eqref{eq:upper_semi} and the choice of $ r_x $, for each $ B_j^{(m)} $ there exists a closed subset $ S_j^{(m)} \subset B_j^{(m)} $ satisfying  $ \h_d(S_j^{(m)}) \geq (1-\epsilon) \h_d [B_j^{(m)}] $,  for which $ w(y,z) \leq w_m + \epsilon $ and $ V(y) \leq V_m + \epsilon $ whenever $ y,z \in S_j^{(m)} $.
    If $ w_m $, $ m=1,2 $, are both finite, Lemma~\ref{lem:wt_short_range} applies to sets $ S_j^{(m)} $. It gives for any pair of positive numbers $ \alpha_j^{(m)} $, $ m=1,2 $, for which $ \sum_m \alpha_j^{(m)} = \sum_m \beta_j^{(m)} $:
    \begin{equation}
        \label{eq:upper}
        \begin{aligned}
            \limsup_{N\to \infty}  \frac{\U(\omega_N^*; B_j)}{N^{1+s/d}}    
            \leq 
            &\sum_{m=1,2} 
            \left(
                ( w_m+\epsilon )\cdot \frac{C_{s,d}^k({\alpha^{(m)}_j})^{1+s/d}}{\h_d(S_j^{(m)})^{s/d}} + \alpha^{(m)}_j(V_m+\epsilon) 
            \right)\\
            \leq 
            &\sum_{m=1,2} 
            \left(
                \frac{ w_m+\epsilon }{(1-\epsilon)^{s/d}} \cdot \frac{C_{s,d}^k({\alpha^{(m)}_j})^{1+s/d}}{\h_d(B_j^{(m)})^{s/d}} + \alpha^{(m)}_j(V_m+\epsilon) 
            \right).
        \end{aligned}
    \end{equation}
    Note also that for the above to hold, we do not need $ w_m + V_m $ to be finite. For instance, suppose $ w_1 + V_1 <+\infty = w_2 + V_2 $; then the above inequality is trivial unless $ \alpha_j^{(2)} = 0 $, in which case we apply the argument of Lemma~\ref{lem:wt_short_range} to the ball $ B_j^{(1)} $ only.

    Inequality \eqref{eq:upper} holds in particular if the values  $ \alpha_j^{(m)} = \tilde\alpha_j^{(m)} $ are chosen to minimize the right-hand side over positive $ \alpha_j^{(m)} $, $ m=1,2 $, with $ \sum_m \alpha_j^{(m)} = \sum_m \beta_j^{(m)} $. Using Lagrange multipliers, we see that such $ \tilde \alpha_j^{(m)} $ must satisfy
    \[
        \frac{V_2- V_1}{C_{s,d}^k (1+s/d)} =
        (w_1+\epsilon)\left(\frac{\tilde \alpha_j^{(1)}}{(1-\epsilon) \h_d(B_j^{(1)})}\right)^{s/d} 
        -
        (w_2+\epsilon)\left(\frac{\tilde \alpha_j^{(2)}}{(1-\epsilon) \h_d(B_j^{(2)})}\right)^{s/d}.
    \]
    Note that the left-hand side in the above equation is independent of $ \h_d(B_j^{(1)}) / \h_d(B_j^{(2)}) $. As a result, 
    limit of the right-hand side for $ j\to \infty $ is also independent of this ratio.
    This fact will be essential in completing the proof.

    Observe that equations~\eqref{eq:lower}--\eqref{eq:upper} hold for every pair of sufficiently small radii $ r_j^{(m)} $.
    To obtain estimates for the density $ d\mu/d\h_d $, divide~\eqref{eq:lower} and \eqref{eq:upper} through by $ \h_d(B_j) $ and take $ j\to \infty $. Without loss of generality, the limits $ \gamma_m := \lim_{j\to \infty} \h_d(B_j^{(m)}) / \h_d(B_j) $ exist; otherwise we pass to a suitable subsequence. We have from \eqref{eq:lower}--\eqref{eq:upper} and optimality of $\tilde \alpha_j^{(m)} $,
    \begin{equation}
        \label{eq:double_estimates}
        \begin{aligned}
            &\sum_{m=1,2} \gamma_m \left(C_{s,d}^k(w_m-\epsilon)(\rho_m-\epsilon)^{1+s/d} + (\rho_m-\epsilon)(V_m-\epsilon)\right) \\
            &\leq
            \sum_{m=1,2} 
            \gamma_m \left(
                C_{s,d}^k \frac{ w_m+\epsilon }{(1-\epsilon)^{s/d}}\,\alpha_m^{1+s/d} + \alpha_m (V_m+\epsilon) 
            \right)\\
            &\leq\sum_{m=1,2} \gamma_m \left(C_{s,d}^k\frac{w_m+\epsilon}{(1-\epsilon)^{s/d}}\,\rho_m^{1+s/d} + \rho_m(V_m+\epsilon)\right), \\
        \end{aligned}
    \end{equation}
    where we denote $ \alpha_m := \lim_{j\to \infty} {{\tilde\alpha}^{(m)}_j} / \h_d(B_j^{(m)}) $; we ensure these limits exist by passing to a subsequence.
    Here $ \rho_m = d\mu/d\h_d(x_m) $, $ m=1,2 $. Since the above holds for every fixed $ \epsilon > 0 $, after one takes $ \epsilon\downarrow0 $, the inequalities turn into equalities: 
    \begin{equation}
        \label{eq:equalities}
        \begin{aligned}
            \sum_{m=1,2} \gamma_m \left(C_{s,d}^k w_m\rho_m^{1+s/d} + \rho_m\, V_m\right) 
            =
            \sum_{m=1,2} 
            \gamma_m \left(
                C_{s,d}^k w_m\alpha_m^{1+s/d} +\alpha_m\, V_m 
            \right).
        \end{aligned}
    \end{equation}
    Recall that the limit of ratios $ {{\tilde\alpha}^{(m)}_j} / \h_d(B_j^{(m)}) $ is independent of the limit of the ratio $ \h_d(B_j^{(1)}) /\h_d(B_j^{(2)}) $. On the other hand, the double estimate \eqref{eq:double_estimates} holds for any pair of sufficiently small balls $ B_j^{(m)} $. This allows to vary their radii independently, to produce sequences of balls, centered around $ x_1 $ and $ x_2 $, for which the limiting ratios $ (\gamma_1,\gamma_2) $ are $ (1,0) $ and (0,1). For such sequences, equation~\eqref{eq:equalities} gives 
    \[
        \rho_m = \alpha_m = \lim_{j\to \infty} \frac{\tilde \alpha_j^{(m)}}{\h_d(B_j^{(m)})}, \qquad m=1,2,
    \]
    whence we conclude that  the equation
    \[
        \frac{V_2 - V_1}{C_{s,d}^k(1+s/d)} = w_1\rho_1^{s/d} - w_2\rho_2^{s/d}
    \]
    holds for $ \h_d $-a.e.\ pair $ x_1, x_2 $. In particular, $ w(x,x)\rho(x)^{s/d} + V_1/(C_{s,d}^k(1+s/d)) ={const} =: L_1 < \infty  $ $ \h_d $-a.e., since we can pick $ x_1 $ among the points for which $ w(x_1, x_1) + V(x_1) < \infty $ and $ \rho(x_1) < \infty $. Combined with the condition $ \int \rho(x) \,d\h_d(x) = 1  $ that the function $ \rho(x)=d\mu/d\h_d $ must satisfy as the density of a probability measure, this yields~\eqref{eq:density}.

    In the remaining part of the proof we derive the formula for the asymptotics of minimizers of $ E_s^k $ on $ A $. To begin, note that when $ w(x,x) + V(x) = +\infty $ for $ \h_d $-a.e. $ x\in A $, using Proposition~\ref{prop:abs_cont} and arguing as in~\eqref{eq:lower}, we immediately have that the asymptotics with respect to $ N^{1+s/d} $ are infinite. It suffices to assume for the rest of this proof that $ w(x,x) + V(x) < \infty $ on a set of positive $ \h_d $-measure.

    By the above argument, $ w(x,x)\rho(x)^{s/d} + V(x) $ is bounded on $ \supp \mu $ by $ L_1 $; hence, $ w\rho^{1+s/d}\in L^1(A,\h_d) $; similarly, $ V\rho\in L^1(A,\h_d) $. As a result, $ \h_d $-a.e.\ point in $ A $ is a Lebesgue point for functions $ w\rho^{1+s/d} $ and $ V\rho $, and the measure $ \h_d $: 
    for any fixed $ \epsilon >0 $, at $ \h_d $-a.e.\ $ x\in A $ there exists a small enough $ r>0 $ such that 
    \begin{equation}
        \label{eq:lebesgue_point}
        \begin{aligned}
        \left|w(x,x)\rho(x)^{1+s/d} \cdot \h_d[B_A(x,r)] - \int_{B_A(x,r)}  w(y,y)\rho(y)^{1+s/d}\, d\h_d(y) \right| 
        &\leq \epsilon\h_d[B_A(x,r)],\\
        \left|V(x)\rho(x) \cdot \h_d[B_A(x,r)] - \int_{B_A(x,r)}  V(y)\rho(y)\, d\h_d(y) \right| 
        &\leq \epsilon\h_d[B_A(x,r)].
    \end{aligned}
    \end{equation}
    To obtain the expression for optimal asymptotics, we use \eqref{eq:lebesgue_point}, the second display in \eqref{eq:centers}, and argue as in~\eqref{eq:lower}, to derive for every $ \epsilon > 0 $ and $ \h_d $-a.e. $ x\in A $, with $ r< r_{x,\epsilon} $ sufficiently small:
    \[
        \begin{aligned}
            \liminf_{N\to \infty}&  \frac{\U(\omega_N^*; B_x)}{N^{1+s/d}}         
             \geq 
            \h_d(B_x) \left(C_{s,d}^k(w(x,x)-\epsilon )\cdot  \left( (1- \epsilon)\rho(x) \right)^{1+s/d}  + (1- \epsilon)\rho(x)(V(x)-\epsilon)\right) \\
            & \geq 
            (1-\epsilon)^{1+s/d}\int_{B_x} \left(C_{s,d}^k w(y,y) \rho(y) ^{s/d} + V(y)\right)\, d\mu(y) - \epsilon\h_d(B_x)\left(C_{s,d}^k \rho(x)^{1+s/d} + \rho(x) + 2 \right),
        \end{aligned}
    \]
    where $ B_x:= B_A(x,r) $. Using the Vitali covering theorem \cite[Theorem 2.8]{mattila1995geometry} for the Radon measure $ \h_d $, we can cover $ \h_d $-a.e.\ of $ A $ with a countable collection of such disjoint $ B_x $; since 
    \[
        \begin{aligned}
        \liminf_{N\to \infty} \frac{E_s^k(\omega_N^*; w,V)}{N^{1+s/d}} 
        &\geq \liminf_{N\to \infty} \sum_j  \frac{\U(\omega_N^*; B_{x_j})}{N^{1+s/d}} \\
        &\geq (1-\epsilon)^{1+s/d} \int_A \left(C_{s,d}^kw(x,x)\rho(x)^{s/d} + V(x)\right)\, d\mu(x)  -c\epsilon
        \end{aligned}
    \]
    for a suitable constant $ c $ (we used that $ \rho^{s/d} $ is bounded because $w\geq w_0$), it remains to show that $ \int (C_{s,d}^k w\rho^{s/d} + V)\,d\mu  $ is also an upper bound for the asymptotics. 

    Such upper bound follows by placing optimal configurations of cardinalities $ \lceil \mu(B_{x_j}) N \rceil $ into the sets $ S_{x_j} \subset B_A(x_j, \gamma r_{x_j,\epsilon}) $, defined in the same way as $ S_j^{(m)} $ above, for $ \gamma \in (0,1) $. Indeed, for any finite collection of disjoint closed balls $ B_m $ with $ \h_d(\partial B_m \cap A) = 0 $, by placing the minimizers in a suitable closed subset $ S_m \subset B_m $ satisfying \eqref{eq:lower_semi}, \eqref{eq:upper_semi}, with $ \h_d(S_m) \geq (1-\epsilon) \h_d(B_m) $:
    \[
        \omega_N := \bigcup_{m=1}^M \omega_{N_m}, \qquad E_s^k(\omega_{N_m};w,V)\leq \mathcal E_s^k({N_m}, S_m;w,V)+1, \quad N_m = \lceil \mu(B_m) N \rceil,
    \] 
    and arguing as in Lemma~\ref{lem:wt_short_range} one has
    \begin{equation*}
        \begin{aligned}
            \limsup_{N\to \infty}  \frac{E_s^k(\omega_N^*;w,V)}{N^{1+s/d}}
            &\leq \limsup_{N\to \infty}  \frac{E_s^k(\omega_N;w,V)}{N^{1+s/d}}\\
            &\leq\sum_{m=1}^M  \left(C_{s,d}^k\frac{w_m+\epsilon}{(1-\epsilon)^{s/d}}((1+\epsilon)\rho_m)^{1+s/d} + (1+\epsilon)\rho_m(V_m+\epsilon)\right) \h_d(B_m),
        \end{aligned}
    \end{equation*}
    where as usual, we write $ w_m $ and $ V_m $ for the values of the respective functions at the centers of $ B_m $, and $ B = \bigcup_m B_m $. Choosing the centers of $ B_m $ in $ \supp \mu $
    and using~\eqref{eq:lebesgue_point} gives
    \[
        \limsup_{N\to \infty}  \frac{E_s^k(\omega_N^*;w,V)}{N^{1+s/d}} \leq \frac{(1+\epsilon)^{1+s/d}}{(1-\epsilon)^{s/d}} \int_B \left(C_{s,d}^k w(x,x)\rho(x)^{s/d}+ V(x)\right)\, d\mu(x)  + \epsilon c ,
    \]
    where it is used again that $ \rho^{s/d} $ is bounded on $ \supp \mu $. This finishes the proof of the theorem.
\end{proof}

\section{Connections to other short-range interactions}
\label{sec:connections}
\subsection{Convex kernels on the circle}
When $ d = \dim_H A = 1 $, we can compute explicitly the values of $ C_{s,1}^k $ for any $ s > 0 $ and $ k\geq 1 $. Moreover, we will show that on the periodized interval $ [0,1] $, the minimizers of the energy $ E^k_\phi $ defined below are equally spaced, for any convex decreasing function of distance $ \phi $. Equivalently, minimizers of such energies on $ \mathbb S^1 $ with embedded distance are equally spaced points for any convex decreasing kernel.

\begin{theorem}
    Let $  A = \mathbb S^1  $  with the distance $ \vartheta = s / 2\pi $ for the arc length $ s $, and assume that $ \phi:[0,1/2]\to [0,\infty] $ is a convex decreasing function. For any $ N\geq 1$ and $ k\geq 1 $, the energy
    \[
        E^k_\phi(\omega_N) := \sum_{x\in\omega_N} \sum_{y\in\nn_k(x;\omega_N)} \phi\left(\vartheta(x , y) \right) 
    \]
    is minimized by every set $ \omega_N^* $ consisting of $ N $ equally spaced points.
\end{theorem}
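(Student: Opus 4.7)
My plan is to reduce the statement to a single Jensen-type inequality applied cyclically on the circle. I would parameterize $\omega_N = (x_1, \ldots, x_N)$ by its cyclic ordering with consecutive gaps $g_i := \vartheta(x_i, x_{i+1})$ summing to $1$, and define the cyclic sums
\[
S_j := \sum_{i=1}^N \phi\bigl(\vartheta(x_i, x_{i+j})\bigr), \qquad 1 \leq j \leq N-1.
\]
Writing $t_i := g_i + \cdots + g_{i+j-1}$, one has $\vartheta(x_i, x_{i+j}) = \psi(t_i)$ where $\psi(t) := \phi(\min(t, 1-t))$. The key observation is that $\psi$ is convex on the whole of $[0,1]$: its restrictions to $[0,1/2]$ and $[1/2,1]$ are convex by hypothesis on $\phi$, and the one-sided derivatives at the kink $t=1/2$ satisfy $\psi'(1/2^-) = \phi'(1/2^-) \leq 0 \leq -\phi'(1/2^-) = \psi'(1/2^+)$ because $\phi$ is nonincreasing. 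Since the cyclic average $\tfrac{1}{N}\sum_i t_i = j/N$, Jensen's inequality gives
\[
S_j \geq N\,\phi\bigl(\min(j/N,\, 1 - j/N)\bigr), \qquad 1 \leq j \leq N-1,
\]
with equality for $j=1$ precisely when all the gaps are equal.

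Next I would pass from the $S_j$ to the energy itself. For the trivial regime $N \leq k$, the convention $\nn_k(x;\omega_N) = \omega_N \setminus \{x\}$ gives immediately $E_\phi^k(\omega_N) = \sum_{j=1}^{N-1} S_j$, and the conclusion follows by summing the Jensen estimate. For $N \geq k+1$, I write $k = 2m + \epsilon_k$ with $\epsilon_k \in \{0,1\}$. Since $\phi$ is decreasing, the $k$ nearest neighbors of $x_i$ provide the $k$ largest values of $\phi(\vartheta(x_i,\cdot))$ on $\omega_N\setminus\{x_i\}$, so for any $k$-subset $J_i \subset \omega_N\setminus\{x_i\}$,
\[
\sum_{y \in \nn_k(x_i;\omega_N)} \phi\bigl(\vartheta(x_i,y)\bigr) \;\geq\; \sum_{y \in J_i} \phi\bigl(\vartheta(x_i,y)\bigr).
\]
Choosing the (near-)symmetric window $J_i := \{x_{i-m}, \ldots, x_{i-1},\, x_{i+1}, \ldots, x_{i+k-m}\}$ and using the reindexing $\sum_i \phi\bigl(\vartheta(x_i, x_{i\pm j})\bigr) = S_j$, the double sum collapses to
\[
E_\phi^k(\omega_N) \;\geq\; 2\sum_{j=1}^m S_j + \epsilon_k\, S_{m+1}.
\]
Combined with $S_j \geq N\phi(j/N)$ (which is valid as $m+1 \leq \lfloor N/2 \rfloor$ under $N \geq k+1$), this yields
\[
E_\phi^k(\omega_N) \;\geq\; 2N\sum_{j=1}^m \phi(j/N) + \epsilon_k\,N\phi\bigl((m+1)/N\bigr).
\]

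To finish, a direct computation of the sorted neighbor distances $1/N, 1/N, 2/N, 2/N, \ldots$ from any point of an equally spaced configuration confirms that this lower bound is attained. The main technical obstacle, beyond tracking index arithmetic and the two parities of $k$, is to verify the convexity of $\psi$ through the kink at $t=1/2$; once that is in hand, the remainder is essentially combinatorial. A secondary subtlety is that the tie-breaking convention in $\nn_k$ plays no role, since I only invoke the general inequality $\sum_{y \in \nn_k(x)} \phi(\vartheta(x,y)) \geq \sum_{y \in J} \phi(\vartheta(x,y))$ valid for any $k$-subset $J$.
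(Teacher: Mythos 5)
Your proof is correct and follows essentially the same route as the paper: bound the $k$-nearest-neighbor sum at each point from below by the sum over a (near-)symmetric window of cyclically consecutive neighbors (using that $\phi$ is decreasing), then apply Jensen's inequality to each $j$-step cyclic sum and match the result with the energy of $N$ equally spaced points. Your only deviation is the symmetrized kernel $\psi(t)=\phi\bigl(\min(t,1-t)\bigr)$ applied to the clockwise arcs $t_i$, a minor technical refinement of the paper's use of the identity $\sum_i \vartheta(x_i,x_{i+j})=j$ (which, as stated there, presumes each $j$-step arc is at most half the circle), so it handles that point slightly more carefully but is not a genuinely different argument.
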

\begin{proof}
    Consider an arbitrary set $ \omega_N $ of $ N $ distinct points in $  \mathbb S^1 $. It suffices to show that its energy is at least the one of $ \omega_N^* $, as defined above.
    We will assume that the entries of $ \omega_N = (x_1,\ldots,x_N) $ are numbered clockwise, so that for example $ x_1 $ and $ x_3 $ are adjacent to the point $ x_2 $, etc. We will also use indices of $ x_i $ modulo $ N $, so for any $ x_i $ the two adjacent points in the above ordering are given by $ x_{i-1} $ and $ x_{i+1} $.

    Consider the following sets of $ k $ indices 
    \[
        I_{i,k} := \left\{ i-\left\lfloor \frac k2\right\rfloor, i-\left\lfloor \frac k2\right\rfloor +1,\ldots, i-1, i+1,\ldots, i+\left\lceil \frac k2\right\rceil-1, i+\left\lceil \frac k2\right\rceil \right\}.  
    \] 
    Order the points in $  \{ x_j \in \omega_N : j \in I_{i,k} \} $ by the nondecreasing distance from $ x_i $ and denote the points with the resulting ordering
    $ y^{(i)}_1, \ldots, y^{(i)}_k $. Then there holds
    \[
        \vartheta(x_i , (x_i;\omega_N)_j) \leq  \vartheta(x_i , y^{(i)}_j),
    \]
    where as before, $ (x_i;\omega_N)_j  $ is the $ j $-th nearest entry of $ \omega_N $ to $ x_i $. This inequality follows from $ \vartheta(x_i , y^{(i)}_l) \leq  \vartheta(x_i , y^{(i)}_j) $ for $ l\leq j $, so there are at least $ j-1 $ entries of $ \omega_N $ that are closer to $ x_i $ than $ y^{(i)}_j $.
    By the monotonicity of $ \phi $, then
    \begin{equation}
        \label{eq:iprime}
        \sum_{y\in \nn_k(x_i;\omega_N)} \phi\left(\vartheta({x}_i , y) \right) \geq \sum_{j=1}^k \phi\left( \vartheta ({x}_i , y^{(i)}_j) \right), \qquad 1\leq i \leq N.  
    \end{equation}

    Now observe that for any configuration of $ N $ distinct points $ \omega_N \in (\mathbb S^1)^N $ numbered clockwise,
    \[
        \sum_{i=1}^N \vartheta(x_i , x_{i+1}) = 1.
    \]
    Indeed, the above sum contains the distances between adjacent points, which add up to the length of $  \mathbb S^1 $. Further, one has
    \begin{equation}
        \label{eq:distance_sum}
        \sum_{i=1}^N \vartheta(x_i , x_{i+j})  = \sum_{i=1}^N \sum_{l=1}^j  \vartheta( x_{i+l-1} , x_{i+l}) = j,
    \end{equation}
    whenever $ 2j \leq N $. A similar formula holds for negative $ j $.

    In view of \eqref{eq:iprime}, \eqref{eq:distance_sum}, convexity of $ \phi $, and that without loss of generality $ k\leq N-1 $, we obtain
    \[
        \begin{aligned}
        E^k_\phi(\omega_N)
        &= \sum_{i=1}^N \sum_{y\in \nn_k(x_i;\omega_N)} \phi\left( \vartheta({x}_i , y) \right) \geq \sum_{i=1}^N \sum_{j=1}^k \phi\left( \vartheta ({x}_i , y^{(i)}_j) \right) \\
        &= \sum_{\substack{j=-\lfloor k/2\rfloor\\ j\neq 0}}^{\lceil k/2 \rceil} \sum_{i=1}^N \phi\left(\vartheta ({x}_i , x_{i+j} )\right) \geq \sum_{\substack{j=-\lfloor k/2\rfloor\\ j\neq 0}}^{\lceil k/2 \rceil} N \, \phi \left(\frac1N \sum_{i=1}^N \left(\vartheta({x}_i , x_{i+j}) \right)\right) \\
        &= N  \sum_{\substack{j=-\lfloor k/2\rfloor\\ j\neq 0}}^{\lceil k/2 \rceil} \phi \left(\frac {|j|}N \right) = E^k_\phi(\omega^*_N).
    \end{aligned}
    \]
    In the second line of the above display we used Jensen's inequality. Here, as defined in the statement of the theorem, $ \omega_N^* $ consists of $ N $ equally spaced points in $  \mathbb S^1 $.
\end{proof}

\begin{corollary} 
    The value of the constant $ C_{s,1}^k $ is given by 
    \[
        C_{s,1}^k  = \sum_{\substack{j=-\lfloor k/2\rfloor\\ j\neq 0}}^{\lceil k/2 \rceil} \frac1{|j|^s}.
    \] 
\end{corollary}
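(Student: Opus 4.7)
The plan is to apply the theorem to $\phi(r)=r^{-s}$, which is convex and strictly decreasing on $(0,1/2]$ for every $s>0$, and then to transfer the resulting identity from $\mathbb S^1$ to the unit interval $Q_1=[-1/2,1/2]$. Evaluating at $N$ equally spaced points on $\mathbb S^1$ gives the minimum value
\[
    N\sum_{\substack{j=-\lfloor k/2\rfloor\\ j\neq 0}}^{\lceil k/2\rceil}(|j|/N)^{-s} \;=\; N^{1+s}\,\Sigma, \qquad \Sigma:=\sum_{\substack{j=-\lfloor k/2\rfloor\\ j\neq 0}}^{\lceil k/2\rceil}|j|^{-s},
\]
and the same argument on a circle of circumference $L$ (i.e., with the metric rescaled by $L$) returns the minimum $L^{-s}N^{1+s}\Sigma$.

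For the upper bound $C_{s,1}^k\le\Sigma$, I would test against the $N$ equally spaced points $x_i=-1/2+(2i-1)/(2N)$, $1\le i\le N$, in $Q_1$. For every interior index $\lceil k/2\rceil<i\le N-\lceil k/2\rceil$ the $k$ nearest neighbors of $x_i$ are $x_{i-\lfloor k/2\rfloor},\ldots,x_{i-1},x_{i+1},\ldots,x_{i+\lceil k/2\rceil}$, so the point energy equals $N^s\Sigma$, exactly matching the per-point value on $\mathbb S^1$. The $O(k)$ boundary points contribute at most $2N^s\sum_{j=1}^{k}j^{-s}=O(N^s)$, which is negligible relative to $N^{1+s}$; dividing by $N^{1+s}$ and letting $N\to\infty$ yields $C^k_{s,1}\le\Sigma$.

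For the matching lower bound, I would take a sequence $\{\omega_N^*\}\subset Q_1$ with $E^k_s(\omega_N^*)\le\mathcal E^k_s(Q_1,N)+1$ and embed each $\omega_N^*$ into the circle of circumference $L_N=1+\epsilon_N$ (with $Q_1\hookrightarrow[0,L_N]/\!\sim$), choosing $\epsilon_N\downarrow 0$ so that the $k$-nearest-neighbor structure is identical on the interval and on the circle. Then the interval and circle energies of $\omega_N^*$ coincide, and the scaled theorem gives
\[
    E^k_s(\omega_N^*)\;\ge\; L_N^{-s}\,N^{1+s}\,\Sigma,
\]
so that $C^k_{s,1}\ge\Sigma$ after letting $N\to\infty$. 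To justify the choice of $\epsilon_N$, I would combine Corollary~\ref{cor:almost_minimizer_separation} with Corollary~\ref{cor:covering} (both in the case $V\equiv 0$, $w\equiv 1$) to show that in dimension one, the consecutive ordered gaps of $\omega_N^*$ are $O(N^{-1})$, and therefore the distance from every $x\in\omega_N^*$ to its $k$-th nearest neighbor is at most $Mk/N$ for some $M=M(s,k)>0$ and all large $N$. Setting $\epsilon_N=3Mk/N$ then makes every wrap-around distance on the enlarged circle at least $\epsilon_N>Mk/N$, which strictly exceeds the interval $k$-nn distance, so wrap-arounds cannot alter the $k$-nn selection.

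The main obstacle is the quantitative $O(N^{-1})$ control of consecutive gaps for near-minimum configurations on $Q_1$, which relies on the interplay between optimal separation and optimal covering established in the previous section; once this uniform gap bound is in hand, the rest is a routine comparison between the interval and a slightly enlarged circle.
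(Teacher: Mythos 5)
Your proposal is correct and follows essentially the same route as the paper: apply the theorem with the convex decreasing kernel $\phi(r)=r^{-s}$ to get the explicit minimum $N^{1+s}\sum_{j}|j|^{-s}$ on the circle, then transfer the asymptotics to the interval $[0,1]$ with Euclidean distance. The only difference is that where the paper dispatches the transfer with the phrase ``due to the short-range properties,'' you make it explicit — upper bound by testing equally spaced points (boundary points contribute only $O(N^{s})$), lower bound by embedding near-minimizers into a circle of circumference $1+O(k/N)$, using the covering bound of Corollary~\ref{cor:covering} to ensure the $k$-nearest-neighbor structure is unaffected — which is a valid instantiation of exactly that step.
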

\begin{proof}
    The unit circle $ \mathbb S^1 $ with the metric $ \vartheta $ above can be identified with the periodized unit interval $ [0,1) $ equipped with the natural distance. Due to the short-range properties of Riesz $ k $-energies $ E^k_s $ (with convex decreasing $ \phi(r) = 1/r^s $), the asymptotics of the minimal energy for set $ A' = [0,1) $ with this distance coincide with the asymptotics for $ A = [0,1] \subset \mathbb R $ with the Euclidean distance. 
\end{proof}

\subsection{Relation of \texorpdfstring{$ k $}{k}-energies with \texorpdfstring{$ s>d $}{s>d} to full hypersingular Riesz energies} 

As explained in Section~\ref{subsec:relationHyper}, we obtain our results about the full interaction in the hypersingular case $ s>d $ by passing to the limit $ k\to \infty $ in $ E_s^k $. To do that, we will need the following lemma, which has been established in a slightly different form in \cite[Lem.\ 5.2]{borodachovLow2014}. Recall that $ \Delta(\omega_N) = \min_{1\leq i < j \leq N} \|x_i - x_j\| $ stands for the separation of the configuration $ \omega_N $.
\begin{lemma}
    \label{lem:offdiagonal}
    Let $ A \subset \mathbb R^d $ be a compact set.
    Let further $ \omega_N \subset A $ be a sequence of configurations satisfying $ \Delta(\omega_N) \geq c_0 N^{-1/d} $, $ N\geq 2 $, and $ w $ a bounded weight function on $ A\times A $. Then there holds
    \[
        \limsup_{N\to \infty}\frac1{N^{1+s/d}}\sum_{x\in \omega_N} \sum_{y\notin \nn_k(x;\omega_N) } w(x,y)\|x - y \|^{-s} \leq c(k,s,d), 
    \]
    with $ c(k,s,d) \to 0 $, $ k\to \infty $.
\end{lemma}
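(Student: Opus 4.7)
The plan is to use the separation hypothesis together with a volume-packing argument to obtain a lower bound on the distance from $x$ to its $j$-th nearest neighbor in $\omega_N$, and then to bound the off-diagonal sum by a tail of a convergent series.

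First, I would show that for each $x\in\omega_N$ there is a constant $c_1=c_1(c_0,d)$ such that
\[
    r_j(x) := \| x - (x;\omega_N)_j \| \geq c_1\, j^{1/d}\, N^{-1/d}, \qquad j\geq 1.
\]
Indeed, since $\Delta(\omega_N)\geq c_0 N^{-1/d}$, the balls $\{B(y, c_0 N^{-1/d}/2) : y \in \omega_N\}$ are pairwise disjoint. If $y$ is one of the $j$ nearest neighbors of $x$, then $B(y,c_0N^{-1/d}/2)\subset B(x, r_j(x) + c_0 N^{-1/d}/2)$, so comparing $d$-dimensional Lebesgue volumes gives
\[
    j\, v_d (c_0 N^{-1/d}/2)^d \leq v_d (r_j(x) + c_0 N^{-1/d}/2)^d,
\]
from which the claimed bound follows (with, say, $c_1 = c_0/4$ once $j$ exceeds an absolute constant; for small $j$ one uses the separation directly).

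Next, using that $w$ is bounded on $A\times A$, say $\|w\|_\infty \leq M$, I would estimate for each fixed $x\in\omega_N$
\[
    \sum_{y\notin \nn_k(x;\omega_N)} w(x,y) \| x - y \|^{-s} \leq M \sum_{j=k+1}^{N-1} r_j(x)^{-s} \leq M c_1^{-s}\, N^{s/d} \sum_{j=k+1}^{N-1} j^{-s/d},
\]
and then sum over the $N$ choices of $x$ to obtain
\[
    \frac{1}{N^{1+s/d}}\sum_{x\in\omega_N}\sum_{y\notin \nn_k(x;\omega_N)} w(x,y) \|x-y\|^{-s} \leq M c_1^{-s} \sum_{j=k+1}^{\infty} j^{-s/d}.
\]
Since the setting is hypersingular, $s > d$, so $s/d > 1$ and the series $\sum j^{-s/d}$ converges; call its tail starting from index $k+1$ by $c(k,s,d)$. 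Then $c(k,s,d)\to 0$ as $k\to\infty$, which is the desired conclusion (up to absorbing $Mc_1^{-s}$ into the constant).

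The only real subtlety I foresee is getting the right form of the lower bound on $r_j(x)$ uniformly in $j$ and in the position of $x$ (in particular handling the small-$j$ regime where the $+1$ from the volume comparison is not negligible), but this is standard: one either restricts to $j$ large enough that $j^{1/d}-1\geq j^{1/d}/2$ and treats small $j$ via $r_j(x)\geq \Delta(\omega_N)$ directly, or absorbs the constants into $c_1$. Everything else is bookkeeping.
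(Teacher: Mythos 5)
Your proposal is correct, and it rests on the same basic mechanism as the paper's proof: the separation hypothesis lets you pack disjoint balls of radius $\sim c_0N^{-1/d}/2$ and conclude that points far from $x$ in neighbor rank must also be far in distance, so the off-diagonal sum is controlled by the tail of a series converging precisely because $s>d$ (an assumption that, as you note, is implicit from the surrounding subsection). The bookkeeping differs, though: the paper decomposes $\omega_N\setminus\{x\}$ into annular layers $L_m$ of width $\sim N^{-1/d}$ around $x$, proves the cardinality bound $\#L_m\leq c_2m^{d-1}$, and then chooses $M$ with $k\geq\sum_{m<M}c_2m^{d-1}$ so that the excluded $k$ nearest neighbors swallow the first $M-1$ layers, yielding the tail $\sum_{m\geq M}m^{d-1-s}$; you instead bound the $j$-th nearest-neighbor distance directly, $r_j(x)\geq c_1 j^{1/d}N^{-1/d}$, and sum over ranks $j>k$, getting the tail $\sum_{j>k}j^{-s/d}$. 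Your indexing by neighbor rank matches the $k$-nearest-neighbor truncation more directly and avoids the layer-to-rank conversion step, while the paper's layer count is the more traditional formulation; the two tails are equivalent up to the substitution $M\sim k^{1/d}$. Your handling of the small-$j$ regime (using $r_j(x)\geq\Delta(\omega_N)$ for $j<2^d$ and $j^{1/d}-1\geq j^{1/d}/2$ otherwise) closes the only delicate point, and, like the paper, your final constant also depends on $c_0$ and $\sup w$, which is harmless for the intended application in Lemma 5.5.
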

\begin{proof}
    Fix a point $ x\in\omega_N $. 
    Denote $ 2h_N := c_0 N^{-1/d}$ for brevity. For an $ m\geq1 $, let 
    \[
        L_m =  \{ y \in \omega_N : mh_N < \|y - x\| \leq (m+1)h_N \}.
    \]
    There holds $ \h_d[B(x_i,r) ] =  v_d r^d $ for any $ r>0 $, whence $ \h_d [B(x_i,r+t) \setminus B(x_i,r)]  \leq c_1 t(r+t)^{d-1} $, $ t\geq 0 $ for some positive constant $ c_1=c_1(d) $. Since the distance between any two points in $ \omega_N $ is at least $ 2h_N $, the interiors of balls $ B(x_j, h_N) $ for $ 1\leq j \leq N $ must be pairwise disjoint. This allows to estimate $ \#L_m $ by volume considerations: since
    \[
        \bigcup_{y\in L_m} B(y, h_N) \subset \left[B\left(x,(m+2)h_N\right) \, \big\backslash \, B\left(x,(m-1)h_N\right)\right],
    \]
    there holds $ v_d h_N^d \cdot \#L_m \leq  3c_1 h_N((m+2)h_N)^{d-1}$, which gives for $c_2=c_2(d)$,
    \[ 
        \#L_m \leq  c_2 m^{d-1}.
    \]
    Summing up the pairwise energies over spherical layers around $ x $, one obtains further
    \[ 
        \begin{aligned}
            \sum_{y\in\omega_N} \|{y} - x \|^{-s} 
            &=  \sum_{m=1}^\infty \sum_{y\in L_m} \|y - {x} \|^{-s} \leq \sum_{m=1}^\infty \frac{c_2 m^{d-1}}{(m h_N)^s} = \frac{c_2 2^s N^{s/d}}{c_0^s} \sum_{m=1}^\infty \frac1{m^{s-d+1}}.
        \end{aligned}
    \]
    This implies for $k \geq \sum_{m=1}^{M-1} c_2 m^{d-1} \geq \sum_{1}^{M-1} \# L_m$ that
    \[
        \frac1{N^{1+s/d}}\sum_{x\in\omega_N}  \sum_{y\notin \nn_k(x;\omega_N)} \|x-y\|^{-s} \leq c_2(2/c_0)^{s} \sum_{m=M}^\infty \frac1{m^{s-d+1}},
    \]
    which converges to $ 0 $ for $ k\to \infty $, and thus gives the desired statement. Observe that the convergence is uniform over all configurations with $ \Delta(\omega_N) \geq c_0 N^{-1/d} $. 
\end{proof}

\begin{lemma}
    \label{lem:k_to_infty}
    Suppose $ A\subset \mathbb R^d $ is a compact set, $ w, V $ satisfy the assumptions of Theorem~\ref{thm:separation}, and $w$ is bounded; assume also a sequence $ k_n $, $ n\geq 1 $, satisfies $ k_n\to \infty $, $ n\to \infty $. Then 
    \[
        \mathcal E^{k_n}_s(A,N; w,V) \Big/\mathcal E_s(A,N; w, V)  \longrightarrow 1, \qquad N\to\infty, n\to \infty.
    \]
\end{lemma}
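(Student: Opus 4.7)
The upper direction $\mathcal E_s^{k_n}(A, N; w, V) \leq \mathcal E_s(A, N; w, V)$ is immediate from the pointwise inequality $E_s^{k_n}(\omega_N; w, V) \leq E_s(\omega_N; w, V)$: the former is obtained from the latter by retaining, for each $x\in\omega_N$, only the $k_n$ nearest-neighbor terms of the inner sum, and the discarded terms $w(x,y)\|x-y\|^{-s}$ are nonnegative. Taking infima gives $\limsup \mathcal E_s^{k_n}/\mathcal E_s \leq 1$, so the content of the lemma lies in the matching lower bound.

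For that, the plan is to take a near-minimizer $\omega_N^*$ of $E_s^{k_n}(\cdot\,;w,V)$, say with $E_s^{k_n}(\omega_N^*;w,V) \leq \mathcal E_s^{k_n}(A,N;w,V) + 1$. By Theorem~\ref{thm:separation} applied with $R=N^{-s/d}$, this configuration is separated, $\Delta(\omega_N^*) \geq c_0 N^{-1/d}$ for some $c_0>0$. Because the external-field contribution $N^{s/d}\sum_{x\in\omega_N} V(x)$ is identical in $E_s$ and $E_s^{k_n}$, it cancels in the difference, and Lemma~\ref{lem:offdiagonal} (invoked with the bounded weight $w$, which we may assume in view of Remark~\ref{rem:wVpositive}) gives
\[
E_s(\omega_N^*;w,V) - E_s^{k_n}(\omega_N^*;w,V) = \sum_{x\in\omega_N^*}\sum_{\substack{y\in\omega_N^*,\\ y\notin \nn_{k_n}(x;\omega_N^*)\cup\{x\}}} w(x,y)\|x-y\|^{-s} \leq \epsilon_n N^{1+s/d},
\]
with $\epsilon_n\to 0$ as $n\to\infty$. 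Chaining the estimates,
\[
\mathcal E_s(A,N;w,V) \leq E_s(\omega_N^*;w,V) \leq E_s^{k_n}(\omega_N^*;w,V) + \epsilon_n N^{1+s/d} \leq \mathcal E_s^{k_n}(A,N;w,V) + 1 + \epsilon_n N^{1+s/d},
\]
and dividing by $\mathcal E_s$, which is $\Theta(N^{1+s/d})$ (the lower bound from Lemma~\ref{lem:lower_bound_nonsmooth} combined with $\mathcal E_s \geq w_0\,\mathcal E_s^{k_n}(\text{unweighted})$ after Remark~\ref{rem:wVpositive}), yields $\liminf \mathcal E_s^{k_n}/\mathcal E_s \geq 1 - O(\epsilon_n)\to 1$.

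The main technical obstacle is ensuring the uniformity $\epsilon_n\to 0$ as $n\to\infty$. The worst-case separation extracted from~\eqref{eq:replace_point} deteriorates like $c_0(k_n) = \Omega(k_n^{-1/s})$, whereas the tail constant in Lemma~\ref{lem:offdiagonal} scales like $c_0^{-s}$ times a series truncated after $\sim k^{1/d}$ terms; the crude product is $O(k_n^{(2d-s)/d})$, which vanishes only for $s>2d$. For the remaining range $d<s\leq 2d$ one must refine the input, either by sharpening the separation estimate for $k_n$-near-minimizers (exploiting that the argument in the proof of Theorem~\ref{thm:separation} can be localized around the shortest edge, rather than summed globally) or by replacing $\omega_N^*$ with a uniformly-separated surrogate of comparable $E_s^{k_n}$-energy, built by inserting a near-minimizer of $E_s$ into the subregion where the clusters of $\omega_N^*$ accumulate. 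Once $\epsilon_n\to 0$ is in hand, the two-sided estimate forces the ratio to $1$ jointly as $N,n\to\infty$.
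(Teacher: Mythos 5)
Your first two paragraphs reproduce, almost verbatim, the paper's own proof: the paper takes a near-minimizer $\omega_N^*$ of $E_s^{k_n}$ and a near-minimizer $\omega_N'$ of $E_s$, chains $E_s^{k_n}(\omega_N^*;w,V)\le E_s^{k_n}(\omega_N';w,V)+1\le E_s(\omega_N';w,V)+1\le E_s(\omega_N^*;w,V)+2$, and then bounds the tail $E_s(\omega_N^*;w,V)-E_s^{k_n}(\omega_N^*;w,V)$ by combining the separation of $\omega_N^*$ from Theorem~\ref{thm:separation} with Lemma~\ref{lem:offdiagonal}, using boundedness of $w$; the external-field terms cancel exactly as you say, and the normalization $\mathcal E_s\gtrsim N^{1+s/d}$ needed to pass from the difference to the ratio is handled as you indicate (one cosmetic point: Theorem~\ref{thm:separation} requires a fixed $R$, so apply it with $R=1$ rather than $R=N^{-s/d}$).

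The substance of your third paragraph is a genuine observation, and it is precisely the point on which the paper's own write-up is silent. The constant in \eqref{eq:replace_point} behaves like $(k+n(k,p))^{-1/s}\sim k^{-1/s}$, while the bound in Lemma~\ref{lem:offdiagonal} is proportional to $c_0^{-s}$ times a tail of $\sum_m m^{d-1-s}$ starting at $M\sim k^{1/d}$; the naive product is indeed of order $k^{(2d-s)/d}$, which tends to zero only for $s>2d$. The paper simply writes $c(k_n,s,d)\to0$ without tracking the $k$-dependence of the separation constant, so your proposal stops exactly where the published argument is also incomplete for $d<s\le 2d$. Your first suggested remedy is the right way to close this and can be carried out: in the replacement argument behind Theorem~\ref{thm:separation}, choose $z$ in the Frostman set not only at distance at least $\cf N^{-1/d}$ from every point of $\omega_N$, but also (by a Markov/averaging argument against the Frostman measure, which for $s>d$ gives a set of positive measure of such $z$) with full potential $\sum_{x\in\omega_N}\|z-x\|^{-s}\le CN^{s/d}$. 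Then both $\Sigma_4$ and $\Sigma_5$ in that proof are $O(N^{s/d})$ uniformly in $k$ (for $\Sigma_5$ because the incoming terms at $z$ form a subset of this potential sum), so the separation constant for $E_s^{k}$-near-minimizers is independent of $k$ whenever $s>d$, and your two-sided estimate then yields the lemma for the whole range $s>d$. As written, however, your proposal leaves this step as a sketch, so it is not yet a complete proof on the range $d<s\le 2d$ — but this is a defect it shares with, rather than introduces relative to, the paper's own proof.
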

\begin{proof}
    Let $ \omega_N^* = \{ x_1^*,\ldots,x_N^* \} $ be such that
    \[
        E^{k_n}_s(\omega_N^*; w, V) \leq \mathcal E^{k_n}_s(\omega_N; w, V) + 1, \qquad N \geq 2.
    \]
    Similarly, let $ \omega_N' = \{ x_1',\ldots,x_N' \} $ be a sequence near-minimizing $ E_s $:
    \[
        E_s(\omega_N'; w, V) \leq \mathcal E_s(A, N; w,  V) + 1, \qquad N \geq 2. 
    \]
    By the construction of $ \omega_N^* $ and $ \omega_N' $, for every $ N $ there holds, 
    \[ 
        E^{k_n}_s(\omega_N^*; w, V) \leq E^{k_n}_s(\omega_N'; w, V) + 1 \leq E_s(\omega_N'; w, V) + 1 \leq E_s(\omega_N^*; w, V) + 2.
    \]
    In addition, since $ \omega_N^* $ is separated by Theorem~\ref{thm:separation}, using Lemma~\ref{lem:offdiagonal} the difference $ \mathcal E_s(A,N; w, V) - \mathcal E^{k_n}_s(A,N; w,V)$ can be estimated by
    \[
        \begin{aligned}
            E_s(\omega_N^*; w, V)  &-  E^{k_n}_s(\omega_N^*; w, V)\\
            &\leq
            \sum_{x\in \omega_N} \sum_{y\notin \nn_{k_n}(x;\omega_N) } w(x,y)\|x - y \|^{-s} = c(k_n,s,d)N^{1+s/d},
    \end{aligned}
    \]
    where $ c(k_n,s,d) \to 0 $, $ k_n\to \infty $, and we used the boundedness of $ w $. This completes the proof of the lemma.
\end{proof}
\begin{proof}[Proof of Theorem~\ref{thm:s_ge_d_asympt}.] 
    If $ w(x,x) +V(x) $ is not bounded on a subset of $ A $ of positive $ \h_d $-measure, the optimal asymptotics of $ E_s^1 $ are infinite by Theorem~\ref{thm:k_asympt}, and since $ E_s\geq E_s^1 $, there is nothing to prove.

    For a compact $ A\subset \mathbb R^d $, constant weight $ w $, and a lower semicontinuous $ V$, the first claim of Theorem~\eqref{thm:s_ge_d_asympt} follows from Lemma \ref{lem:k_to_infty}. The asymptotics and limiting density of asymptotic minimizers of $ E_s $ are obtained by passing to the limit in Theorem~\ref{thm:k_asympt} and the dominated convergence theorem. To extend the result to a compact $ (\h_d,d) $-rectifiable $ A \subset \mathbb R^p $, we then apply Lemma~\ref{lem:federer} to the functionals $ E_s^k $  and $ E_s $. Note that the short-range property and stability for $ E_s $ for $ s>d $ are well-known \cite[Section 8.6.2]{borodachovDiscrete2019}. The case of general weight and external field follows by extending the asymptotics of $ E_s $ according to the argument given in the proof of Theorem~\ref{thm:k_asympt} and monotonic pointwise convergence due to the factor $C_{s,d}^k \uparrow C_{s,d} $, \(k\to \infty \), in the resulting integral functionals expressing the asymptotics. The limiting distribution is likewise obtained by applying the argument in the proof of Theorem~\ref{thm:k_asympt}. 
\end{proof}

\subsection{Proof of \texorpdfstring{$ \Gamma $}{Gamma}-convergence}

For a compact $ A $ we denoted by  $ \mathcal P(A) $ the space of probability measures supported on $ A $. It is a  compact metrizable space. As explained in the introduction, we discuss the properties of short-range interactions on discrete configurations $ \omega_N $, by viewing them as acting on the normalized counting measures $ \nu(\omega_N) \in \mathcal P(A) $.

 The sequence introduced in \ref{it:recovery_seq} is called a \textit{recovery sequence} at the point $ x $. Usefulness of $ \Gamma $-convergence for energy minimization consists in that, together with compactness of $ X = \mathcal P(A) $, it guarantees that minimizers of $ F_N $ converge to those of $ F $. Moreover, $ F_N $ need not attain its minimizer, but this is the case for $ F $ on compact sets, due to its lower semicontinuity.  Namely, the following properties hold.
 \begin{proposition}[\cite{braidesLocal2014}, \cite{dalmasoIntroduction1993}]
    If a sequence of functionals $ \{F_N\} $ on a compact metric space $ X $  $ \Gamma $-converges to $ F $, then
    \begin{enumerate}
        \item $ F $ is lower semicontinuous and $ \min F = \lim_{N\to \infty}\inf F_N $
        \item if $ \{x_N\} $  is a sequence of (global) minimizers of $ F_N $, converging to an $ x\in X $, then $ x $ is a (global) minimizer for $ F $.
    \end{enumerate}
    If $ F_N $ is a constant sequence, $ \glim F $ is the lower semicontinuous envelope of $ F $; i.e., the supremum of lower semicontinuous functions bounded by $ F $ above.
\end{proposition}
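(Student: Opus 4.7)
The plan is to handle each claim separately, using only the defining properties \ref{it:gamma_lower} and \ref{it:recovery_seq} of $\Gamma$-convergence together with the compactness of $X$.

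For the lower semicontinuity of $F$, I would argue by a standard diagonal extraction. Fix $y\in X$ and a sequence $y_n\to y$; I need $F(y)\leq \liminf_n F(y_n)$, and by passing to a subsequence it suffices to replace $\liminf$ by $\lim$. For each $n$ apply \ref{it:recovery_seq} to obtain a recovery sequence $\{x_N^{(n)}\}_N$ with $x_N^{(n)}\to y_n$ and $F_N(x_N^{(n)})\to F(y_n)$ as $N\to\infty$. For each $n$, choose $N(n)$ (strictly increasing in $n$) large enough so that $d(x_{N(n)}^{(n)}, y_n)<1/n$ and $|F_{N(n)}(x_{N(n)}^{(n)})-F(y_n)|<1/n$. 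Then $x_{N(n)}^{(n)}\to y$ and applying \ref{it:gamma_lower} to this sequence yields $F(y)\leq \liminf_n F_{N(n)}(x_{N(n)}^{(n)}) = \lim_n F(y_n)$, which is what was needed. For the identity $\min F = \lim_{N\to\infty}\inf F_N$, the upper bound $\limsup_N\inf F_N\leq F(x)$ for every $x$ follows by taking a recovery sequence for $x$ and using $\inf F_N\leq F_N(x_N)\to F(x)$; taking infimum over $x$ gives $\limsup_N\inf F_N\leq\inf F$. For the matching lower bound, pick $x_N$ nearly minimizing $F_N$ (say $F_N(x_N)\leq \inf F_N+1/N$). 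By compactness of $X$ there is a subsequence $x_{N_k}\to x^*$, and \ref{it:gamma_lower} gives $F(x^*)\leq \liminf_k F_{N_k}(x_{N_k})\leq \liminf_N \inf F_N$. Combining, $\inf F\leq F(x^*)\leq \liminf_N\inf F_N\leq \limsup_N\inf F_N\leq \inf F$, so all are equal, and lower semicontinuity plus compactness ensure the infimum $F(x^*)=\inf F$ is attained, i.e. $\min F$ exists.

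For claim (2), let $x_N$ minimize $F_N$ and suppose $x_N\to x$. By \ref{it:gamma_lower}, $F(x)\leq \liminf_N F_N(x_N) = \liminf_N \inf F_N = \min F$, so $x$ achieves the minimum of $F$.

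For the last statement with constant $F_N\equiv F$, I would verify that the two defining properties of $\Gamma$-convergence collapse to the two inequalities characterizing the lower semicontinuous envelope $\operatorname{sc}^- F(x):=\sup\{G(x):G\leq F,\ G\text{ lsc}\}=\inf\{\liminf_N F(y_N):y_N\to x\}$. Indeed, writing $G:=\glim F_N$, \ref{it:gamma_lower} says that for every $y_N\to x$, $\liminf_N F(y_N)\geq G(x)$, whence $G(x)\leq \operatorname{sc}^- F(x)$; conversely \ref{it:recovery_seq} provides some $y_N\to x$ with $F(y_N)\to G(x)$, giving $\operatorname{sc}^- F(x)\leq \liminf_N F(y_N)=G(x)$. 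Hence $G=\operatorname{sc}^- F$.

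The only subtle step is the diagonal extraction in the proof of lower semicontinuity; it requires choosing $N(n)$ so that the double sequence $x_{N(n)}^{(n)}$ still converges to $y$ and its functional values still track $F(y_n)$, which is routine in a metric space but must be stated carefully. Everything else follows directly from the two axioms and the compactness of $X$ used to extract a limit of near-minimizers.
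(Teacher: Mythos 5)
The paper itself gives no proof of this proposition --- it is quoted from the cited monographs of Braides and Dal Maso --- so the only benchmark is the standard textbook argument, and that is essentially what you reproduce: a diagonal extraction of recovery sequences for lower semicontinuity of $F$, recovery sequences for $\limsup_N \inf F_N \leq \inf F$, near-minimizers plus compactness plus the liminf inequality for the reverse bound, convergence of minimizers as an immediate consequence, and the sequential characterization of the lower semicontinuous envelope for the constant-sequence claim. In outline this is correct.

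Two steps need tightening. First, in the lower bound for $\inf F_N$ you extract a convergent subsequence $x_{N_k}\to x^*$ by compactness and then write $F(x^*)\leq \liminf_k F_{N_k}(x_{N_k}) \leq \liminf_N \inf F_N$; the last inequality is backwards for an arbitrary subsequence, since a liminf along a subsequence is $\geq$, not $\leq$, the liminf of the full sequence. The standard fix is to reverse the order of extraction: first pass to a subsequence of indices along which $\inf F_N$ converges to $\liminf_N \inf F_N$, then apply compactness to the near-minimizers along that subsequence; the chain $\inf F \leq F(x^*) \leq \liminf_N \inf F_N \leq \limsup_N \inf F_N \leq \inf F$ then closes as you intend and simultaneously shows the minimum of $F$ is attained at $x^*$. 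Second, the diagonal step in the semicontinuity proof applies the liminf inequality to points indexed only by the subsequence $N(n)$, whereas the definition speaks of full sequences; as you yourself flag, this is repaired by filling in, e.g.\ setting $z_N := x^{(n)}_{N(n)}$ for $N(n)\leq N < N(n+1)$, so that $z_N\to y$ and $F(y)\leq \liminf_N F_N(z_N) \leq \liminf_n F_{N(n)}\bigl(x^{(n)}_{N(n)}\bigr)=\lim_n F(y_n)$. Finally, the identity $\operatorname{sc}^- F(x)=\inf\{\liminf_N F(y_N) : y_N\to x\}$ that carries the constant-sequence claim deserves its one-line justification (the right-hand side is lower semicontinuous and $\leq F$, while any lsc minorant $G$ satisfies $G(x)\leq \liminf_N G(y_N)\leq \liminf_N F(y_N)$). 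With these small repairs the argument is complete and matches the cited references.
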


\begin{proof}[Proof of Theorem~\ref{thm:gamma}.]
    To verify the property \ref{it:gamma_lower} of the definition of $ \Gamma $-convergence, suppose a sequence $ \{\mu_N\}\subset \mathcal P({A}) $ weak$ ^* $ converges to $ \mu\in\mathcal P({A}) $. Observe that if     
    \begin{equation*}
        \liminf_{N\to\infty} \frac1{N^{1+s/d}} \f_N(\mu_N; w, V) = +\infty \geq \f(\mu; w, V),
    \end{equation*}
    the inequality in \ref{it:gamma_lower} holds trivially. It therefore suffices to assume that the limit in the last equation is finite. In particular, $ \{\mu_N\} $ must contain a subsequence comprising only elements from $ \mathcal P_N({A}) $, so without loss of generality we suppose that $ \mu_N$,  $ N\geq 1, $ is a sequence of discrete measures converging to $ \mu \in \mathcal P(A) $, such that the following limit exists and is finite:
    \[
        \lim_{N\to\infty} \frac1{N^{1+s/d}} \f_N(\mu_N; w, V),
    \]
    so that it will suffice to show its value is at least $ \f(\mu; w, V) $.
    By Proposition~\ref{prop:abs_cont}, finiteness of the asymptotics implies that $ \mu $ must be absolutely continuous with respect to $ \h_d $. 

    The rest of the proof can be obtained by a modification of that of Theorem~\ref{thm:k_asympt}. Indeed, let $ \omega_N $ be the sequence of $ N $-point configurations corresponding to the measures $ \mu_N $, and denote $ \rho := d\mu /d\h_d $. First, let $ \rho, w $, $ V $ be bounded on $ A $. Then $ w\rho^{1+s/d} $, $ V $ are also bounded  and  hence in $ L^1(A,\h_d) $, and thus equations \eqref{eq:lebesgue_point} apply. Since the argument resulting in~\eqref{eq:lower} did not use optimality of the sequence of configurations, it applies to the $ \omega_N $; thus, we have for $ \h_d $-a.e. $ x\in A $, setting $ B_x:= B_A(x,r) $ with $ r< r_{x,\epsilon} $ sufficiently small:
    \[
        \begin{aligned}
            \liminf_{N\to \infty}&  \frac{\U(\omega_N; B_x)}{N^{1+s/d}}         
             \geq 
            \h_d(B_x) \left(C_{s,d}^k(w(x,x)-\epsilon )\cdot  \left( (1- \epsilon)\rho(x) \right)^{1+s/d}  + (1- \epsilon)\rho(x)(V(x)-\epsilon)\right) \\
            & \geq 
            (1-\epsilon)^{1+s/d}\int_{B_x} \left(C_{s,d}^k w(y,y) \rho(y) ^{s/d} + V(y)\right)\, d\mu(y) - \epsilon\h_d(B_x)\left(C_{s,d}^k \rho(x)^{1+s/d} + \rho(x) + 2 \right),
        \end{aligned}
    \]
    Applying Vitali covering theorem to $ A $, we conclude as in the proof of Theorem~\ref{thm:k_asympt}:
    \begin{equation}
        \label{eq:lower_bound_pf}
        \begin{aligned}
        \liminf_{N\to \infty} \frac{E_s^k(\omega_N; w,V)}{N^{1+s/d}} 
        &\geq \liminf_{N\to \infty} \sum_j  \frac{\U(\omega_N; B_{x_j})}{N^{1+s/d}} \\
        &\geq (1-\epsilon)^{1+s/d} \int_A \left(C_{s,d}^kw(x,x)\rho(x)^{s/d} + V(x)\right)\, d\mu(x)  -c\epsilon
        \end{aligned}
    \end{equation}
    This completes the proof of \ref{it:gamma_lower} for bounded densities $ \rho $ and $ w,V $. The case of unbounded $\rho$ follows by superadditivity of $ E_s^k(\omega_N; w,V) $ as a function of $ \omega_N $, and the previous lower bound for the probability measures
    \[
        \mu_h(E) := \frac{\int_{E\cap \{ \rho \leq h \}} \rho(x)\, d\h_d(x) }{\int_{\{ \rho \leq h \}} \rho(x)\, d\h_d(x) }.
    \]
    In view of the property $ \rho \cdot \mathbbm{1}_{\rho\leq h} \uparrow \rho $ as $ h\to \infty $ for the (non-normalized) densities of $ \mu_h $, monotone convergence theorem applies to the integral in the right-hand side of~\eqref{eq:lower_bound_pf}, and the lower bound with $ \mu $ in the integral follows by approximation. The cases of unbounded weights $ w $ and external fields $ V $ are similarly handled by considering the finite truncations $ w_h:= w\cdot \mathbbm{1}_{w\leq h} $ and $ V_h:= V\cdot \mathbbm{1}_{V\leq h} $, and using the monotone convergence theorem in the right-hand side of \eqref{eq:lower_bound_pf}. This proves~\ref{it:gamma_lower}.

    To present a recovery sequence for \ref{it:recovery_seq}, we again invoke the argument from the proof of Theorem~\ref{thm:k_asympt}. In the case of a bounded $ \rho $, constructing a sequence of piecewise minimizers $ \omega_N $ approximating $\rho$ as in that proof gives
    \[
        \limsup_{N\to \infty}  \frac{E_s^k(\omega_N;w,V)}{N^{1+s/d}} \leq \frac{(1+\epsilon)^{1+s/d}}{(1-\epsilon)^{s/d}} \int_B \left(C_{s,d}^k w(x,x)\rho(x)^{s/d}+ V(x)\right)\, d\mu(x)  + \epsilon c ,
    \]
    where $ B $ is a union of disjoint closed balls with $ \mu(B) > (1-\epsilon) $. In the case of unbounded $ \rho $, we construct recovery sequences for $ \mu_h $ as above, and then take a diagonal sequence.
\end{proof}

\bibliographystyle{acm}
\bibliography{refs}

\vskip\baselineskip

{\footnotesize
    {\sc Center for Constructive Approximation\\ \indent Department of Mathematics, Vanderbilt University, Nashville, TN, 37240}

    {\it Email address:} {\tt doug.hardin@vanderbilt.edu}

    {\it Email address:} {\tt edward.b.saff@vanderbilt.edu}

    \medskip
    {\sc Department of Mathematics, Florida State University, Tallahassee, FL 32306}\\
    \indent{\sc Current address: Department of Mathematics, Vanderbilt University, Nashville, TN, 37240}

    {\it Email address:} {\tt oleksandr.vlasiuk@vanderbilt.edu}

} 
\end{document}